\numberwithin{equation}{section}
\newtheorem{theorem}{Theorem}[section]
\newtheorem{assumption}[theorem]{Assumption}
\newtheorem{corollary}[theorem]{Corollary}
\newtheorem{definition}[theorem]{Definition}
\newtheorem{example}[theorem]{Example}
\newtheorem{lemma}[theorem]{Lemma}
\newtheorem{remark}[theorem]{Remark}
\newenvironment{proof}[1][Proof]{\noindent\textbf{#1.} }{\ \rule{0.5em}{0.5em}}
\newcommand{\reals}{\mathbb{R}}
\newcommand{\timeint}{\mathcal{T}}
\newcommand{\coloneqq}{:=}
\renewcommand{\cite}[2][]{[\onlinecite[#1]{#2}]}
\renewcommand\theequation{\arabic{section}.\arabic{equation}}
\newcommand{\GeorgyMarkup}[1]{\begin{color}{black}#1\end{color}}
\newcommand{\GeorgyMarkupy}[1]{\begin{color}{black}#1\end{color}}
\newcommand{\GeorgyMarkupx}[1]{\begin{color}{black}#1\end{color}}
\newcommand{\AlexMarkup}[1]{\begin{color}{black}#1\end{color}}
\def\@email#1#2{%
 \endgroup
 \patchcmd{\titleblock@produce}
  {\frontmatter@RRAPformat}
  {\frontmatter@RRAPformat{\produce@RRAP{*#1\href{mailto:#2}{#2}}}\frontmatter@RRAPformat}
  {}{}
}%
\begin{document}

\preprint{AIP/123-QED}

\title[Stochastic dynamics of particle systems on unbounded degree graphs]{Stochastic dynamics of particle systems on unbounded degree graphs}
\author{Georgy Chargaziya}
 \altaffiliation{Contact email address - georgy.chargaziya@swansea.ac.uk}
 \affiliation{Mathematics Department, Swansea University, Swansea, West Glamorgan, SA1 8EN, United Kingdom.}
\author{Alexei Daletskii}%
 \altaffiliation{Contact email address - alex.daletskii@york.ac.uk}
 \affiliation{Mathematics Department, University of York, York, North Yorkshire, YO10 5DD, United Kingdom.}

\date{\today}

\begin{abstract}
We consider an infinite system of coupled stochastic differential equations
(SDE)\ describing dynamics of the following infinite particle system. Each
particle is characterised by its position $x\in \mathbb{R}^{d}$ and
internal parameter (spin) $\sigma _{x}\in \mathbb{R}$. While the positions
of particles form a fixed ("quenched") locally-finite set (configuration) $%
\gamma \subset $ $\mathbb{R}^{d}$, the spins $\sigma _{x}$ and $\sigma _{y}$
interact via a pair potential whenever $\left\vert x-y\right\vert <\rho $,
where $\rho >0$ is a fixed interaction radius. The number $n_{x}$ of
particles interacting with a particle in position $x$ is finite but
unbounded in $x$. The growth of $n_{x}$ \GeorgyMarkup{as $|x|\rightarrow \infty $} creates a
major technical problem for solving our SDE system. To overcome this
problem, we use a finite volume approximation combined with a version of the
Ovsjannikov method, and prove the existence and uniqueness of the solution
in a scale of Banach spaces of weighted sequences. As an application
example, we construct stochastic dynamics associated with Gibbs states of
our particle system.\\
\textbf{Keywords:} interacting particle systems, infinite systems of
stochastic equations, scale of Banach spaces, Ovsjannikov's method,
dissipativity.\\
\textbf{2010 Mathematics Subject Classification:} 82C20; 82C31; 60H10; 46E99
\end{abstract}
\maketitle
\section{Introduction}

In recent decades, there has been an increasing interest in studying
countable systems of particles randomly distributed in the Euclidean space $%
\mathbb{R}^{d}$. In such systems, each particle is characterized by its
position $x\in X:=\mathbb{R}^{d}$ and an internal parameter (spin) $\sigma
_{x}\in S:=\mathbb{R}^{n}$, see for example \cite{Romano}, \cite[Section 11]%
{OHandley}, \cite{Bov} and \cite{DKKP,DKKP1}, pertaining to modelling of
non-crystalline (amorphous) substances, e.g. ferrofluids and amorphous
magnets. Throughout the paper we suppose, mostly for simplicity, that $n=1$.

Let us denote by $\Gamma (X)$ the space of all locally finite subsets
(configurations) of $X$ and consider a particle system with positions
forming a given fixed (\textquotedblleft quenched\textquotedblright )
configuration $\gamma \in \Gamma (X)$. Two spins $\sigma _{x}$ and $\sigma
_{y} $, $x,y\in\gamma$, are allowed to interact via a pair potential if the
distance between $x$ and $y$ is no more than a fixed interaction radius $%
\rho >0$, that is, they are neighbours in the geometric graph defined by $%
\gamma $ and $\rho $. The evolution of spins is described then by a system
of coupled stochastic differential equations.

Namely, we consider, for a fixed $\gamma \in \Gamma (X)$, a system of
stochastic differential equations in \GeorgyMarkup{$S=\mathbb{R}$} of the following
form: 

\begin{equation}
d\xi _{x,t}=\Phi _{x}(\Xi _{t})dt+\Psi _{x}(\Xi _{t})dW_{x,t},\ \quad x\in
\gamma ,\ t\geq 0,  \label{system}
\end{equation}%
where $\Xi _{t}=\left( \xi _{x,t}\right) _{x\in \gamma }$ and $\left(
W_{x,t}\right) _{x\in \gamma }$ are, respectively, families of real-valued
stochastic processes and independent Wiener processes on a suitable
probability space. Here the drift and diffusion coefficients $\Phi _{x}$ and 
$\Psi _{x}$ are real-valued functions, defined on the Cartesian power $%
S^{\gamma }:=\left\{ \bar{\sigma}=(\sigma _{x})_{x\in \gamma }\left\vert
\sigma _{x}\in S,x\in \gamma \right. \right\} $. Both $\Phi _{x}$ and $\Psi
_{x}$ are constructed using pair interaction between the particles and their
self-interaction potentials, see Section \ref{sec-notanddefs}, and are
independent of $\sigma _{y}$ if $\left\vert y-x\right\vert >\rho $.

The aim of including the diffusion term in (\ref{system}) is two-fold. On
the one hand, it allows to consider the influence of random forces on our
particle system and, on the other hand, to construct and study stochastic
dynamics associated with the equilibrium (Gibbs) states of the system. The
Gibbs states of spin systems on unbounded degree graphs have been studied in 
\cite{KKP,DKKP,DKKP1}, see also references given there.

The case where vertex degrees of the graph are globally bounded (in
particular, if $\gamma $ has a regular structure, e.g. $\gamma =\mathbb{Z}%
^{d}$) has been well-studied (in both deterministic and stochastic cases),
see e.g. \cite{DoRo,Fri,HoSt,LeRit,Roy,Yosh,LLL,DZ1,AKT,AKRT,ADK,ADK1,INZ}, and references therein.
However, the aforementioned applications to non-crystalline substances require
dealing with unbounded vertex degree graphs. An important example of such
graphs is served by configurations $\gamma $ distributed according to a
Poisson or, more generally, Gibbs measure on $\Gamma(X)$ with a superstable
low regular interaction energy, in which case the typical number of
\textquotedblleft neighbours\textquotedblright\ of a particle located at $%
x\in X$ is proportional to $\sqrt{1+\log |x|}$, see e.g. \cite{R70} and \cite%
[p.1047]{K93}.

There are two main technical difficulties in the study of system (\ref%
{system}). The first one is related to the fact that the number of particles
interacting with a tagged particle $x$ is finite but unbounded in $x\in
\gamma $. Consequently, the system cannot be considered an equation
in a fixed Banach space and studied by standard methods of e.g. \cite{DaFo}, 
\cite{DZ}.

\begin{color}{black}%
The way around it has been proposed in \GeorgyMarkup{\cite{DaF}}, where a deterministic
version of system (\ref{system}) (with $\Psi \equiv 0$) was considered in an
expanding scale of embedded Banach spaces of weighted sequences and solved
using a version of the Ovsjannikov method.

Originally, the Ovsjannikov method was developed for a linear equation 
\begin{equation}
\dot{X}_{t}=AX_{t}  \label{lin1}
\end{equation}%
in a scale of densely embedded Banach spaces $B_{\alpha }$, $\alpha \in 
\mathcal{A}$, where $\mathcal{A}$ is a real interval, such that $B_{\alpha
}\subset B_{\beta }$ if $\alpha <\beta $, and $A$: $B_{\alpha }\rightarrow
B_{\beta }$ is bounded with norm satisfying the estimate%
\begin{equation}
\left\Vert A\right\Vert _{\alpha ,\beta }\leq L(\beta -\alpha )^{-q},\
\alpha <\beta ,\ q=1,  \label{Ovs-0}
\end{equation}%
for any $\alpha ,\beta \in \mathcal{A}$. Then, for $X_{0}\in B_{\alpha }$,
equation (\ref{lin1}) has a solution $X_{t}\in B_{\beta }$, $t<T$, for
finite $T$ depending on $\alpha $ and $\beta $.

It was noticed in \GeorgyMarkup{\cite{DaF}} that, under a stronger norm bound with $q<1$
in (\ref{Ovs-0}), the lifetime of the solution $X_{t}\in B_{\beta }$ is
infinite. That fact allows to find a global uniform bound for a sequence of
finite volume approximations of the system of differential equations in
question and prove its convergence, thus proving the existence and
uniqueness of the global solution of the deterministic version of (\ref{system}%
). 

The first advances in the study of stochastic equations in the scale $%
\left\{ B_{\alpha },\alpha \in \mathcal{A}\right\} $, were made in \cite{Dal}
and \cite{ADGC}, where, respectively, local and global strong solutions of a
general stochastic equation had been constructed. In those works, the
coefficients are assumed to be Lipschitz mappings $B_{\alpha }\rightarrow
B_{\beta }$ for any $\alpha <\beta $, with Lipschitz constants $L(\beta
-\alpha )^{-q}$, $q=\frac{1}{2}$ and $q<\frac{1}{2}$, respectively. Observe
that the threshold value of $q$ here is $\frac{1}{2}$ instead of $1$ as in (%
\ref{Ovs-0}) because of the presence of the Itô integral, which makes it
necessary to work in $L^{2}$ spaces instead of $L^{1}$.

The results of \cite{Dal} and \cite{ADGC} are applicable to system (\ref%
{system}) only in the case where the drift coefficients $\Phi _{x},\ x\in
\gamma $, are globally Lipschitz. However, to construct the
dynamics associated with Gibbs states of interacting particle systems, one
has to consider the drift coefficients that are only locally Lipschitz. The
existence of such dynamics, under certain dissipativity conditions on the
drift, is known in the situation of a regular lattice, see \cite{AKT,AKRT}
(observe that those works deal with the more complicated quantum systems but
are applicable to classical systems, too, albeit only for the additive
noise).

For deterministic systems on unbounded degree graphs, the dissipative case
was considered in the aforementioned paper \GeorgyMarkup{\cite{DaF}}. In the present
work, we revisit the volume approximation approach of that paper. However,
the presence of stochastic terms requires the application of very different
techniques. To prove the convergence of finite volume
approximations, we have developed a version of the Gronwall inequality
suitable for a scale of Banach spaces. In this way, we have been able to
prove the existence and uniqueness of global strong solutions of (\ref%
{system}) and their component-wise time continuity, in the case of
dissipative single-particle potentials. 

\end{color}%

The structure of the paper is as follows. In Section \ref{sec-notanddefs} we
introduce the framework and formulate our main results. Section \ref%
{proof-existence} is devoted to the proof of the existence and uniqueness
result for (\ref{system}). In a short Section \ref{Markov}, we discuss
Markov semigroup generated by the solution of (\ref{system}). In Section \ref%
{sdyn}, we study stochastic dynamics associated with Gibbs states of our
system.

Finally, the Appendix contains auxiliary results on linear operators in the
scales of Banach spaces, estimates of the solutions of system (\ref{system})
and, notably, a crucial for our techniques generalization of the classical
comparison theorem and a Gronwall-type inequality, suitable for our
framework.

\textbf{Acknowledgment. } We are very grateful to Zdzislaw Brzezniak,
Dmitry Finkelshtein, Yuri Kondratiev and Jiang-Lun Wu for their interest in
this work and stimulating discussions.

\section{The setup and main results \label{sec-notanddefs}\label%
{sec-stochsystem}}

\begin{color}{black}

Let us fix a configuration $\gamma\in\Gamma(X)$ and a family $\left(
W_{x,t}\right) _{x\in \gamma }$ of independent Wiener processes on a suitable filtered and complete probability space $\mathbf{P}%
\coloneqq%
(\Omega ,\mathcal{F},\mathbb{F},\mathbb{P})$.
Our aim is to find a strong solution of SDE system (\ref{system}), that is, a family
$\Xi _{t}=\left( \xi _{x,t}\right) _{x\in \gamma }$ of continuous adapted stochastic processes on $\mathbf{P}$ such that the equality 

\begin{align}
\xi _{x,t}& =\zeta _{x}+\int_{0}^{t}\Phi _{x}(\Xi _{s})ds+\int_{0}^{t}\Psi
_{x}(\Xi _{s})dW_{x,s},\ x\in \gamma , \,
 \zeta _{x}\in S,  \label{MainSystem}  
\end{align}
holds for all $ t\in \mathcal{T}:=[0,T]$, $T>0$, almost surely, that is, on a common for all $t$ set of probability $1$.
The coefficients $\Phi _{x}$
and $\Psi _{x}$ are defined explicitly in Assumption \ref{mainass} below, and $\int_{0}^{t}\Psi
_{x}(\Xi _{s})dW_{x,s}$ is the continuous version of the Ito integral, cf. Remark \ref{RII.4}.

\end{color}

First, we need to introduce some notations. We fix $\rho >0$ and denote by $%
n_{x}$, $x\in \gamma $, the number of elements in the set 
\begin{equation*}
\bar{\gamma}_{x}:=\left\{ y\in \gamma :\left\vert x-y\right\vert \leq \rho
\right\} .
\end{equation*}%
Observe that $n_{x}\geq 1$ for all $x\in \gamma $, because $x\in \bar{\gamma}%
_{x}$. We will also use the notation $\gamma _{x}:=\bar{\gamma}_{x}\setminus
\left\{ x\right\} \equiv \left\{ y\in \gamma :\left\vert x-y\right\vert \leq
\rho ,y\neq x\right\} $.

For a fixed $\gamma \in \Gamma (X)$, we will consider the Cartesian product $%
S^{\gamma }$ of identical copies $S_{x}$, $x\in \gamma $, of $S$, and denote
its elements by $\bar{z}:=\left( z_{x}\right) _{x\in \gamma }$, etc. When
dealing with multiple configurations $\eta \in \Gamma (X)$, we will
sometimes write $\bar{z}_{\eta }:=\left( z_{x}\right) _{x\in \eta }$, to
emphasize the dependence on $\eta $.

We will work under the following assumption.

\begin{assumption}
\label{mainass}
\end{assumption}

\begin{enumerate}
\item[(\textbf{A})] There exists a constant $C>0$ such that 
\begin{equation}
n_{x}\leq C(1+\log (1+|x|))\ \text{for all }x\in \gamma.  \label{logbound}
\end{equation}

\GeorgyMarkup{
\item[(\textbf{B})] The drift coefficients $\Phi _{x},\ x\in \gamma $, have
the form%
\begin{equation}
\Phi _{x}(\bar{z})%
\coloneqq%
\phi (z_{x})+\sum_{y\in \bar{\gamma} _{x}}\varphi_{x,y}(z_{x},z_{y}),\text{for all }x\in \gamma,
\label{Phi-form}
\end{equation}%
where $\phi :S\rightarrow S$ is a measurable function and $\varphi _{xy}:S^{2}\rightarrow S$ are also measurable functions
satisfying uniform Lipschitz condition 
\begin{align}
	\left\vert \varphi _{xy}(\sigma _{1},s_{1})-\varphi  _{xy}(\sigma
	_{2},s_{2})\right\vert & \leq \bar{a}\left( \left\vert \sigma _{1}-\sigma
	_{2}\right\vert +\left\vert s_{1}-s_{2}\right\vert \right), \nonumber \\
	\left\vert \varphi _{xy}(\sigma _{1},s_{1}))\right\vert &\leq \bar{a}\left(1 + \left\vert \sigma _{1}\right\vert +\left\vert s_{1}\right\vert \right), \nonumber
\end{align}
for some constant $\bar{a}>0$ and all $x,y\in \gamma ,\ \sigma _{1},\sigma
_{2},s_{1},s_{2}\in S.$
}
\item[(\textbf{C})] There exist constants $c>0$ and $R\geq2$ such that 
\begin{equation}
|\phi (\sigma )|\leq c(1+|\sigma |^{R}),\ \sigma \in S.  \label{bound1}
\end{equation}

\item[(\textbf{D})] There exists $b>0$ such that 
\begin{equation}
(\sigma _{1}-\sigma _{2})(\phi (\sigma _{1})-\phi (\sigma _{2}))\leq
b(\sigma _{1}-\sigma _{2})^{2},\ \sigma _{1},\sigma _{2}\in S.
\label{cond-diss1}
\end{equation}

\item[(\textbf{E})] The diffusion coefficients $\Psi _{x},\ x\in \gamma $,
have the form 
\begin{equation}
\Psi _{x}(\GeorgyMarkup{\bar{z}})%
\coloneqq%
\sum_{y\in \bar{\gamma}_{x}}\psi _{xy}(z_{x},z_{y})\ \text{for all }x\in
\gamma ,  \label{Psi-form}
\end{equation}%
where $\psi _{xy}:S^{2}\rightarrow S$ are measurable functions
satisfying uniform Lipschitz condition 
\GeorgyMarkup{
\begin{align}
\left\vert \psi _{xy}(\sigma _{1},s_{1})-\psi _{xy}(\sigma
_{2},s_{2})\right\vert &\leq M\left( \left\vert \sigma _{1}-\sigma
_{2}\right\vert +\left\vert s_{1}-s_{2}\right\vert \right) , \nonumber \\
\left\vert \psi _{xy}(\sigma _{1},s_{1}))\right\vert &\leq M\left(1 + \left\vert \sigma _{1}\right\vert +\left\vert s_{1}\right\vert \right) ,
\label{cond-lip1}
\end{align}
}
for some constant $M>0$ and all $x,y\in \gamma ,\ \sigma _{1},\sigma
_{2},s_{1},s_{2}\in \GeorgyMarkup{S}.$
\end{enumerate}

\bigskip

The specific form of the coefficients requires the development of a special
framework. Indeed, we will be looking for a solution of (\ref{MainSystem})
in a scale of expanding Banach spaces of weighted sequences, which we
introduce below.

\GeorgyMarkup{We start with a general definition and consider a family $\mathfrak{B}$ of
Banach spaces $B_{\alpha }$ indexed by $\alpha \in \bar{\mathcal{A}}:=[\alpha
_{\ast },\alpha ^{\ast }]$ with fixed $0\leq \alpha _{\ast },\alpha ^{\ast
}<\infty $, and denote by $\left\Vert \cdot \right\Vert _{B_{\alpha }}$ the
corresponding norms. When speaking of these spaces and related objects, we
will always assume that the range of indices is $[\alpha _{\ast },\alpha
^{\ast }]$, unless stated otherwise. The interval $\bar{\mathcal{A}}$ remains
fixed for the rest of this work. We will also use the corresponding semi-open
interval $\mathcal{A}:=[\alpha _{\ast },\alpha ^{\ast })$.}

\begin{definition}
\label{Defscale}The family $\mathfrak{B}$ is called a scale if 
\begin{equation*}
B_{\alpha }\subset B_{\beta }\ {\text{and }}\left\Vert u\right\Vert
_{B_{\beta }}\leq \left\Vert u\right\Vert _{B_{\alpha }}{\text{ for any }}%
\alpha <\beta ,\ u\in B_{\alpha },\ \alpha ,\beta \in \mathcal{\bar{A}},
\end{equation*}%
where the embedding means that $B_{\alpha }$ is a dense vector subspace of $%
B_{\beta }$.
\end{definition}

For any $\ \alpha ,\beta \in \mathcal{A}$, we will use the notation 
\begin{equation*}
B_{\alpha +}%
\coloneqq%
{\bigcap }_{\beta >\alpha }B_{\beta }.
\end{equation*}

The two main scales we will be working with are given by the spaces $%
l_{\alpha }^{p}$ of weighted sequences and $l_{\alpha }^{p}$-valued random
processes, respectively, defined as follows.

\begin{color}{black}%

\begin{itemize}
\item[(1)] For all $p\geq 1$ and $\alpha \in \mathcal{\bar{A}}$ let 
\begin{align}
& l_{\mathfrak{\alpha }}^{p}%
\coloneqq%
\left\{ \bar{z}\in S^{\gamma }\ \left\vert \ \Vert \bar{z}\Vert _{l_{\alpha
}^{p}}%
\coloneqq%
\left( \sum_{x\in \gamma }\GeorgyMarkup{w(x)}^{-1}|z_{x}|^{p}\right) ^{\frac{1}{p}%
}<\infty \right. \right\} ,  \label{l-scale} \\[0.01in]
& \GeorgyMarkup{w(x) =} e^{a|x|},  \notag
\end{align}%
and $\mathcal{L}^{p}%
\coloneqq%
\{l_{\alpha }^{p}\}_{\mathfrak{\alpha }\in \mathcal{A}}$ be, respectively, a
Banach space of weighted real sequences and the scale of such spaces.
\end{itemize}

\end{color}%

\begin{itemize}
\item[(2)] \GeorgyMarkupx{For all $p\geq 1$ and $\alpha \in \mathcal{\bar{A}}$ let $%
\mathcal{R}_{\alpha }^{p}$ denote the Banach space of $l_{\alpha }^{p}$
-valued random processes $\bar{\xi}_{t},\ t\in \mathcal{T}$, on
probability space $\mathbf{P}$, with progressively measurable components and finite norm%
\begin{equation*}
\Vert \bar{\xi}\Vert _{\mathcal{R}_{\alpha }^{p}}%
\coloneqq%
\left( \sup \left\{ \mathbb{E}\Vert \GeorgyMarkup{\bar{\xi} _{t}}\Vert _{l_{\alpha }^{p}}^{p}\
\left\vert \ t\in \mathcal{T}\right. \right\} \right) ^{\frac{1}{p}}<\infty ,
\end{equation*}%
and let $\mathcal{R}^{p}%
\coloneqq%
\{\mathcal{R}_{\alpha }^{p}\}_{\alpha \in \mathcal{A}}$ be the scale of such
spaces.}
\end{itemize}

\begin{remark}
\begin{color}{black}%
The choice of exponential weights in the definition of space $l_{\alpha }^{p}
$ is dictated by the logarithmic growth condition on numbers $n_{x}$, cf. %
\ref{logbound}, which in turn is motivated by the fact that it holds for a
typical configuration $\gamma $ distributed according to a Poisson or, more
generally, Gibbs measure on $\Gamma (X)$ with a superstable low regular
interaction energy, in which case $n_{x}$ is proportional to $\sqrt{1+\log
|x|}$, see e.g. \cite{R70} and \cite[p.1047]{K93}. In general, an informal
balance condition between $n_{x}$ and $w(|x|)$ is given by $w(|x|)\approx
exp(exp(n_{x}))$, see Sect. 2.2 of \cite{DaF} for details.%
\end{color}%
\end{remark}
\begin{remark}\label{RII.4}
\GeorgyMarkupx{Note that for $p\geq 2$, the definition of norms in $\mathcal{R}_{\alpha}^{p}$ and $l^{p}_{\alpha}$ implies that for any $\bar{\xi}\in\mathcal{R}_{\alpha}^{p}$ and any $x\in\gamma$ we have
$\mathbb{E}[\int_{0}^{T}\xi_{x,t}^{2}dt] < \infty$.
Moreover, since each component of $\bar{\xi}$ is progressively measurable, from the classical theory of integration with respect to the standard Wiener process we see that for all $x\in \gamma$ the integral
$\int_{0}^{t}\xi_{x,s}dW_{s}$ is well defined and so is the integral
$\int_{0}^{t}\Psi_{x}(\bar{\xi}_{s})dW_{x,s}$,
because $\Psi_{x}$ is a finite sum of measurable uniformly Lipschitz functions.}
\begin{color}{black}Moreover, the process $\int_{0}^{t}\Psi_{x}(\bar{\xi}_{s})dW_{x,s}$, $ t\in \mathcal{T}$, has a (unique) continuous version. \end{color}
\end{remark}

\GeorgyMarkup{For all $p\geq 1$ and $\alpha \in \mathcal{\bar{A}}$ we let 
\begin{align}
	L^{p}_{\alpha} \equiv L^{p}(\Omega,l_{\alpha }^{p}) \coloneqq \bigg{\{}X:\Omega\to l_{\alpha }^{p}\ | \  \bigg{(}\mathbb{E} \bigg{[}\|X\|_{l_{\alpha }^{p}}^{p} \bigg{]} \bigg{)}^{\frac{1}{p}} < \infty  \bigg{\}} \nonumber
\end{align}	}
be the space of $l_{\alpha }^{p}$-valued $p$-integrable random variables.

\bigskip
Our main result is the following theorem.

\begin{theorem}
\label{Existence} Suppose that Assumption \ref{mainass} holds. Then, for all 
$p\geq R$ \GeorgyMarkup{and any $\mathcal{F}_{0}$-measurable $\bar{\zeta}
:=(\zeta _{x})_{x\in \gamma }\in L_{\alpha }^{p}$}, $\alpha\in\mathcal{A}$, stochastic system (\ref%
{MainSystem}) admits a unique \begin{color}{black}(up to indistinguishability) 
strong  solution $\Xi \in \mathcal{R}_{\alpha +}^{p}$.
Moreover, the map%
\begin{equation*}
L^{p}_{\alpha}\ni \bar{\zeta}\mapsto \Xi \in \mathcal{R}_{\beta }^{p}
\end{equation*}%
is continuous for any $\beta >\alpha$.  
\end{color}
\end{theorem}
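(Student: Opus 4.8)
The plan is to construct the solution of (\ref{MainSystem}) as the limit of finite-volume approximations, the convergence being controlled by a Gronwall-type inequality adapted to the scale $\mathcal{L}^{p}$, in the spirit of Ovsjannikov's method. Fix an exhausting sequence of finite sets $\Lambda _{n}:=\{x\in \gamma :|x|\leq r_{n}\}$ with $r_{n}\uparrow \infty $, and let $\Xi ^{(n)}=(\xi _{x,t}^{(n)})_{x\in \gamma }$ be defined by $\xi _{x,t}^{(n)}:=\zeta _{x}$ for $x\notin \Lambda _{n}$ and, for $x\in \Lambda _{n}$, by the finite coupled SDE system obtained from (\ref{MainSystem}) by restricting all sums to $\bar{\gamma}_{x}\cap \Lambda _{n}$. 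The drift of this system is the sum of the dissipative, polynomially bounded map $\phi $ and a globally Lipschitz map, and its diffusion is globally Lipschitz, so by standard results on finite-dimensional SDEs with dissipative drift, together with the comparison theorem of the Appendix (which also accommodates the mere measurability of $\phi $), it has a unique strong solution, with $\sup_{t\in \mathcal{T}}\mathbb{E}|\xi _{x,t}^{(n)}|^{p}<\infty $ for every $x$; consequently $\Xi ^{(n)}\in \mathcal{R}_{\alpha }^{p}$.

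The first key step is a uniform a priori estimate. Applying It\^{o}'s formula to $|\xi _{x,t}^{(n)}|^{p}$ (legitimate since $p\geq R\geq 2$), using (\ref{cond-diss1}) — which with $\sigma _{2}=0$ gives the one-sided bound $\sigma \phi (\sigma )\leq b\sigma ^{2}+\mathrm{const}$ — the linear growth of $\varphi _{xy}$ and $\psi _{xy}$, and Young's inequality, one obtains a componentwise differential inequality $\frac{d}{dt}\mathbb{E}|\xi _{x,t}^{(n)}|^{p}\leq A_{x}\,\mathbb{E}|\xi _{x,t}^{(n)}|^{p}+B_{x}\sum_{y\in \bar{\gamma}_{x}}\mathbb{E}|\xi _{y,t}^{(n)}|^{p}+C_{x}$, where $A_{x},B_{x},C_{x}$ grow at most polynomially in $n_{x}$, hence, by (\ref{logbound}), at most poly-logarithmically in $|x|$. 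The coupling operator $(u_{x})_{x}\mapsto (A_{x}u_{x}+B_{x}\sum_{y\in \bar{\gamma}_{x}}u_{y})_{x}$ then maps $l_{\alpha }^{1}\rightarrow l_{\beta }^{1}$ with norm $\leq C_{q}(\beta -\alpha )^{-q}$ for \emph{every} $q>0$, because the poly-logarithmic weights are absorbed into an arbitrarily small power of $e^{-(\beta -\alpha )|x|}$ — this is exactly the balance between $n_{x}$ and the weight $w$. Feeding this into the scale Gronwall inequality of the Appendix with some $q<\tfrac{1}{2}$ yields, for every $\beta >\alpha $, a bound $\sup_{n}\Vert \Xi ^{(n)}\Vert _{\mathcal{R}_{\beta }^{p}}\leq K(\alpha ,\beta ,T,\Vert \bar{\zeta}\Vert _{L_{\alpha }^{p}})<\infty $, uniform in $n$ and over $t\in \mathcal{T}$.

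The second key step is convergence. For $m>n$, applying It\^{o} to $|\xi _{x,t}^{(n)}-\xi _{x,t}^{(m)}|^{p}$ and splitting the drift difference into a part that is dissipative-plus-Lipschitz in $\Xi ^{(n)}-\Xi ^{(m)}$ and a boundary source $-\sum_{y\in \bar{\gamma}_{x}\cap (\Lambda _{m}\setminus \Lambda _{n})}\varphi _{xy}(\xi _{x,s}^{(m)},\xi _{y,s}^{(m)})$, one obtains for $D_{x}(t):=\mathbb{E}|\xi _{x,t}^{(n)}-\xi _{x,t}^{(m)}|^{p}$ a differential inequality of the same structure, with zero initial data and a source supported in the collar $\{r_{n}-\rho <|x|\leq r_{n}\}\cap \gamma $. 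Bounding that source by the a priori estimate of the previous step at a slightly smaller index $\beta ^{\prime }\in (\alpha ,\beta )$ produces a factor $e^{-(\beta -\beta ^{\prime })(r_{n}-\rho )}$, so its $l_{\beta }^{1}$-norm tends to $0$; the scale Gronwall inequality then gives $\Vert \Xi ^{(n)}-\Xi ^{(m)}\Vert _{\mathcal{R}_{\beta }^{p}}\rightarrow 0$. Hence $(\Xi ^{(n)})$ is Cauchy and its limit $\Xi $ lies in $\mathcal{R}_{\beta }^{p}$ for every $\beta >\alpha $, i.e. $\Xi \in \mathcal{R}_{\alpha +}^{p}$. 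For fixed $x$ and $n$ large enough that $\bar{\gamma}_{x}\subset \Lambda _{n}$, the $n$-th system equation for $\xi _{x,t}^{(n)}$ already has coefficients $\Phi _{x},\Psi _{x}$; passing to the limit there — by standard arguments based on the uniform moment bounds and $p\geq R$ for the drift integral, and by It\^{o}'s isometry with (\ref{cond-lip1}) for the stochastic integral — shows that $\Xi $ solves (\ref{MainSystem}), and componentwise continuity follows from the continuity of the drift integral and the continuous version of the It\^{o} integral (Remark \ref{RII.4}).

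Uniqueness and the continuity of the data-to-solution map are obtained by the same mechanism. For two solutions $\Xi ,\Xi ^{\prime }\in \mathcal{R}_{\alpha +}^{p}$ with data $\bar{\zeta},\bar{\zeta}^{\prime }$, It\^{o}'s formula together with (\ref{cond-diss1}) and the Lipschitz bounds give for $D_{x}(t):=\mathbb{E}|\xi _{x,t}-\xi _{x,t}^{\prime }|^{p}$ the homogeneous inequality $\frac{d}{dt}D_{x}(t)\leq A_{x}D_{x}(t)+B_{x}\sum_{y\in \bar{\gamma}_{x}}D_{y}(t)$ with $D_{x}(0)=\mathbb{E}|\zeta _{x}-\zeta _{x}^{\prime }|^{p}$; the scale Gronwall inequality yields $\Vert \Xi -\Xi ^{\prime }\Vert _{\mathcal{R}_{\beta }^{p}}^{p}\leq K\Vert \bar{\zeta}-\bar{\zeta}^{\prime }\Vert _{L_{\alpha }^{p}}^{p}$ for every $\beta >\alpha $, which gives indistinguishability when $\bar{\zeta}=\bar{\zeta}^{\prime }$ (using componentwise continuity) and the asserted continuity otherwise. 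I expect the main obstacle to be the two a priori estimates — arranging the componentwise differential inequalities so that the coupling operator has the decisive index behaviour $(\beta -\alpha )^{-q}$ with $q<\tfrac{1}{2}$, which relies on (\ref{logbound}) controlling both the number of interacting neighbours and the It\^{o} correction terms — and, within the convergence step, quantifying the decay of the boundary source in the scale norm.
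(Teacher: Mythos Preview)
Your overall architecture matches the paper's: finite-volume approximations $\Xi^{(n)}$, a uniform a~priori bound via It\^o's formula and the scale Gronwall inequality, and a Cauchy estimate in $\mathcal{R}_\beta^p$ driven by a boundary source. The uniqueness and continuous-dependence argument is also the paper's. One cosmetic point: after taking expectations the componentwise inequalities are deterministic and live in the $l^1$ scale, so the Ovsjannikov order needed is $q<1$, not $q<\tfrac12$; the $\tfrac12$ threshold from the Introduction refers to a different approach (working directly in stochastic norms).

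The genuine gap is your identification of the limit $\Xi$ as a solution. You write that ``passing to the limit there --- by standard arguments based on the uniform moment bounds and $p\ge R$ for the drift integral'' suffices. The paper explicitly flags this step as the obstruction: convergence in $\mathcal{R}_\beta^p$ means only $\sup_t\mathbb{E}|\xi_{x,t}^{(n)}-\xi_{x,t}|^p\to0$, and this is not strong enough to pass to the limit in $\int_0^t\phi(\xi_{x,s}^{(n)})\,ds$. The function $\phi$ is merely measurable with the one-sided bound (\ref{cond-diss1}); there is no two-sided local Lipschitz estimate available, and even if $\phi$ were continuous the uniform moment bound at exponent $p$ gives only uniform $L^{p/R}$ control of $\phi(\xi_{x,\cdot}^{(n)})$, which at the endpoint $p=R$ does not yield uniform integrability. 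Your appeal to ``componentwise continuity follows from the continuity of the drift integral and the continuous version of the It\^o integral'' is then circular, since it presupposes that $\Xi$ already satisfies the equation.

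The paper's remedy (Section~\ref{sec-1dspecialcase}) is to bypass the limit in $\phi$ altogether. First, for each fixed $x$, the stronger convergence $\mathbb{E}\sup_{t}|\xi_{x,t}^{(n)}-\xi_{x,t}|^p\to0$ is established via BDG (Theorem~\ref{TAB}), which in particular gives continuity of $\xi_x$. Then one solves, for that $x$, the \emph{one-dimensional} SDE (\ref{maineqn}) in which the neighbouring components are frozen at the already-constructed $\Xi^{(x)}$; this has a global solution $\eta_x$ by a localisation argument. Finally one shows $\mathbb{E}|\xi_{x,t}^{(n)}-\eta_{x,t}|^p\to0$ by an It\^o-plus-Gronwall computation in which the $\phi$-contribution enters only through the dissipativity inequality $(\sigma_1-\sigma_2)(\phi(\sigma_1)-\phi(\sigma_2))\le b(\sigma_1-\sigma_2)^2$, never through a difference $\phi(\sigma_1)-\phi(\sigma_2)$ standing alone. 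This is exactly what makes the argument close without any regularity of $\phi$ beyond (\ref{bound1})--(\ref{cond-diss1}), and it is the missing ingredient in your proposal.
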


\begin{remark}
 Assumption $p\geq R$ ensures that given $\bar{\xi}\in \mathcal{R}_{\beta
}^{p}$ the random variable $\phi (\bar{\xi}_{t})$ is integrable for any $t\geq 0$.
\end{remark}

The proof of Theorem \ref{Existence} will be given in Section \ref%
{proof-existence}.

\bigskip

Our second main result is about the construction of non-equilibrium
stochastic dynamics associated with Gibbs states of our system. We consider
a Gibbs measure $\nu $ on $S^{\gamma }$ defined by the pair interaction $%
W_{x,y}(\sigma _{x},\sigma _{y})=a(x-y)\sigma _{x}\sigma _{y}$, $\sigma
_{x},\sigma _{y}\in S$, $x,y\in \gamma $, where \GeorgyMarkup{$a:X\to \reals$ is a measurable function with compact support} and a single particle potential $V:\mathbb{R}\rightarrow \mathbb{R}$
satisfying the lower bound%
\begin{equation*}
V(\sigma )\geq a_{V}\left\vert \sigma \right\vert ^{R+\varepsilon }-b_{V},\
\sigma \in S,\ \text{for some }a_{V},b_{V}>0\text{ and }\varepsilon >0,
\end{equation*}%
which is supported on $l_{\alpha }^{p}$ for some $\alpha \in \mathcal{A}$
and $p\in \left[R,R+\varepsilon \right] $, see
Section \ref{gibbs0} for details. Suppose now that $\phi $ in (\ref{Phi-form}%
) has a gradient form, that is, $\phi =-\nabla V$, and $\left\{ 
\begin{array}{c}
\psi _{xy}=0,\ x\neq y \\ 
\psi _{xx}=1%
\end{array}%
\right. $ for all $x,y\in \gamma $, so that our noise is additive, cf. (\ref%
{Psi-form}). Let ${\rm T}_{t}$ be the Markov semigroup defined by the process $\Xi
_{t}$ in a standard way. This semigroup acts in the space $C_{b}(l_{\alpha
+}^{p})$ of bounded continuous functions on space $l_{\alpha +}^{p}=\cap
_{\beta >\alpha }l_{\beta }^{p}$ equipped with the projective limit
topology, see Section \ref{Markov} below for details.

\begin{theorem}
\begin{color}{black}%
Gibbs measure $\nu $ is a symmetrizing (reversible) distribution for the
solution of (\ref{MainSystem}), that is,%
\begin{equation*}
\int {\rm T}_{t}f(\bar{\zeta})g(\bar{\zeta})\nu (d\bar{\zeta})=\int f(\bar{\zeta}%
){\rm T}_{t}g(\bar{\zeta})\nu (d\bar{\zeta}),\, t\ge 0,
\end{equation*}%
for any $\alpha \in \mathcal{A}$ and $f,g\in C_{b}(l_{\alpha +}^{p})$.%
\end{color}%
\end{theorem}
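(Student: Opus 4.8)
The plan is to verify the detailed balance (symmetry) relation by reducing it to a finite-dimensional statement via the volume approximation already used in the proof of Theorem \ref{Existence}, and then passing to the limit. First I would fix $\alpha\in\mathcal{A}$ and $f,g\in C_b(l_{\alpha+}^p)$, and recall that on each finite box $\Lambda\subset X$ the truncated system is a genuine finite-dimensional SDE
\begin{equation*}
d\xi^{\Lambda}_{x,t}=-\partial_{z_x}V^{\Lambda}(\Xi^{\Lambda}_t)\,dt+dW_{x,t},\qquad x\in\gamma\cap\Lambda,
\end{equation*}
where $V^{\Lambda}(\bar z)=\sum_{x\in\gamma\cap\Lambda}V(z_x)+\tfrac12\sum_{x,y\in\gamma\cap\Lambda}a(x-y)z_xz_y$ is the (finite) local Hamiltonian whose Gibbs weight is $e^{-V^{\Lambda}(\bar z)}$ up to normalization. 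This is a gradient diffusion with additive unit noise, so its generator on $C_b^2$ is $L^{\Lambda}=\tfrac12\Delta_{\Lambda}-\nabla V^{\Lambda}\cdot\nabla$, which is manifestly symmetric in $L^2(e^{-V^{\Lambda}(\bar z)}d\bar z)$ after an integration by parts (the boundary terms vanish because the growth $V(\sigma)\ge a_V|\sigma|^{R+\varepsilon}-b_V$ forces Gaussian-type decay of the density and its derivatives). Hence the finite-volume semigroup ${\rm T}^{\Lambda}_t=e^{tL^{\Lambda}}$ is self-adjoint on that space, which gives the detailed balance identity
\begin{equation*}
\int {\rm T}^{\Lambda}_tf\,(\bar z)\,g(\bar z)\,\nu^{\Lambda}(d\bar z)=\int f(\bar z)\,{\rm T}^{\Lambda}_tg\,(\bar z)\,\nu^{\Lambda}(d\bar z)
\end{equation*}
for cylinder functions $f,g$ depending only on coordinates in $\Lambda$.

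Next I would remove the cylinder restriction and pass $\Lambda\uparrow X$. For general $f,g\in C_b(l_{\alpha+}^p)$ one approximates by cylinder functions: since $l_{\alpha+}^p$ carries the projective limit topology and $\nu$ is concentrated on $l_\alpha^p$ (by the lower bound on $V$ and the choice $p\in[R,R+\varepsilon]$, cf. Section \ref{gibbs0}), bounded continuous functions are approximated $\nu$-a.e. and boundedly by cylinder functions $f_n$, so by dominated convergence it suffices to prove the identity for cylinder $f,g$. For such $f,g$, the left side of the finite-volume identity converges to the infinite-volume left side because: (i) $\nu^{\Lambda}\Rightarrow\nu$ weakly (standard for superstable Gibbs specifications with the stated single-particle growth — this is exactly the class treated in \cite{DKKP,DKKP1}); and (ii) ${\rm T}^{\Lambda}_tf\to {\rm T}_tf$ pointwise and boundedly, which follows from the convergence of the volume approximations $\Xi^{\Lambda}\to\Xi$ established en route to Theorem \ref{Existence} (convergence in $\mathcal{R}^p_{\beta}$ for $\beta>\alpha$, hence, after passing to a subsequence, $\mathbb{P}$-a.s. convergence of components, and then continuity of $f$ on $l_{\alpha+}^p$ together with uniform integrability from the a priori bounds in the Appendix gives ${\rm T}^{\Lambda}_tf(\bar\zeta)\to{\rm T}_tf(\bar\zeta)$ for $\nu$-a.e. $\bar\zeta$). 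The same argument applies to the right side with $f$ and $g$ interchanged, and combining the two limits yields the claimed symmetry.

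The main obstacle I anticipate is step (ii): controlling ${\rm T}^{\Lambda}_tf-{\rm T}_tf$ uniformly enough to pass to the limit under the integral against $\nu$. The subtlety is that the initial datum $\bar\zeta$ is $\nu$-distributed and only lies in $l_\alpha^p$, not in a smaller space, so one needs the continuity-in-initial-condition part of Theorem \ref{Existence} (the map $L^p_\alpha\ni\bar\zeta\mapsto\Xi\in\mathcal{R}^p_\beta$) together with an estimate, uniform in $\Lambda$, on $\mathbb{E}\|\Xi^{\Lambda}_t\|_{l_\beta^p}^p$ in terms of $\|\bar\zeta\|_{l_\alpha^p}^p$ — precisely the kind of a priori bound proved in the Appendix. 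A secondary technical point is justifying the integration by parts defining self-adjointness of $L^{\Lambda}$: one works on the core of smooth compactly supported functions, or smooth functions with the Gibbs density, and uses that $e^{-V^{\Lambda}}$ and $\nabla V^{\Lambda}e^{-V^{\Lambda}}$ decay at infinity by the coercivity of $V$; this is routine but must be stated. Once these two ingredients are in place, the dominated-convergence passage to the limit is straightforward and completes the proof. \hfill\rule{0.5em}{0.5em}
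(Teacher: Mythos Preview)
Your overall strategy---reduce to a finite-dimensional reversibility statement and pass to the limit---is the same as the paper's, but the way you couple the finite-volume measure to the infinite-volume one contains a genuine gap.

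You take free-boundary finite-volume Gibbs measures $\nu^{\Lambda}$ and claim $\nu^{\Lambda}\Rightarrow\nu$ weakly. This is not justified: what is standard is that such sequences have weak accumulation points in $\mathcal G(S^{\gamma})$, but there is no reason any particular $\nu\in\mathcal G_{\alpha,p}(S^{\gamma})$ arises this way. The paper explicitly notes (Remark after Theorem~\ref{gibbs1}) that non-uniqueness of Gibbs states can occur in this setting, so for a general $\nu$ your limit statement simply fails. A related issue is that your truncated dynamics (with Hamiltonian $V^{\Lambda}$ involving only $x,y\in\gamma\cap\Lambda$) is not the truncation (\ref{FinVolSystem}) used in the paper, which freezes outside coordinates at the initial value $\zeta_x$; hence the convergence $\Xi^{\Lambda}\to\Xi$ proved en route to Theorem~\ref{Existence} does not apply directly to your semigroups.

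The paper's device avoids both problems at once. It observes that for the truncated dynamics (\ref{FinVolSystem}) with boundary $\bar z_{\gamma\setminus\Lambda_n}$, the reversible measure is the \emph{conditional} Gibbs kernel $\mu_{\Lambda_n}(\,\cdot\,|\bar z_{\gamma})$, hence $T_t^{n}$ is symmetric with respect to the specification kernel $\Pi_{\Lambda_n}(\,\cdot\,|\bar z_{\gamma})$. Integrating in $\bar z_{\gamma}$ against $\nu$ and invoking the DLR equation (\ref{DLR}) gives symmetry of $T_t^{n}$ with respect to $\nu$ itself---for \emph{any} $\nu\in\mathcal G_{\alpha,p}(S^{\gamma})$, with no thermodynamic limit of measures needed. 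Only the single limit $T_t^{n}f\to T_t f$ (equation (\ref{lim-sem}), dominated convergence) remains, and that one is already established. You should replace your $\nu^{\Lambda}\Rightarrow\nu$ step by this DLR argument.
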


The proof of this result will be given in Section \ref{sec-stoch-dyn}.

\bigskip

From now on, the constant $p\geq R$ will be fixed.

\section{Existence, uniqueness and properties of the solution. \label%
{proof-existence}}

In this section, we give the proof of Theorem \ref{Existence}. It will go
along the following lines.

\begin{enumerate}
\item[(1)] Consider a sequence of processes $\left\{ \Xi _{t}^{n}\right\}
_{n\in \mathbb{N}}$, $t\in \mathcal{T}$, that solve finite volume cutoffs of system
(\ref{MainSystem}), and prove their uniform bound in $\mathcal{R}_{\beta
}^{p}$ for any $\beta >\alpha $. For this, we use our version of the
comparison theorem and Gronwall-type inequality in the scale of spaces,
which is in turn based on the Ovsjannikov method, see Appendix \ref%
{sec-est-sol}.

\item[(2)] The uniform bound above implies the convergence of sequence $\Xi
^{n}$, $n\rightarrow \infty $, to a process $\Xi =(\xi _{x})_{x\in \gamma
}\in \mathcal{R}_{\beta }^{p}$, $\beta >\alpha $. Our next goal is to prove
that the process $\Xi $ solves system (\ref{MainSystem}). The multiplicative
noise term does not allow to achieve this by a direct limit transition.
Therefore, we construct an $\mathbb{R}$-valued process $\eta _{t}$ that
solves an equation describing the dynamics of a tagged particle $x$, while
processes $\xi _{y,t}$, $y\in \gamma ,y\neq x$, are fixed, and prove that $
\eta _{t}=\xi _{x,t}$.

\item[(3)] The uniqueness and continuous dependence on the initial data is
proved by using our version of a Gronwall-type inequality, as above in part
(1). \GeorgyMarkup{The continuity of components of $\Xi$ will follow from our work on the dynamics of a tagged particle $x$ in section \ref{sec-1dspecialcase}.}
\end{enumerate}

Finally, in Subsection \ref{Markov}, we introduce Markov semigroup defined
by the solution of (\ref{MainSystem}).

\subsection{Truncated System \label{sec-Truncsys}}

Let us fix an expanding sequence $\{\Lambda _{n}\}_{n\in \mathbb{N}}$ of
finite subsets of $\gamma $ such that $\Lambda _{n}\uparrow \gamma $ as $%
n\rightarrow \infty $ and consider the following system of equations:%
\begin{eqnarray}
\xi _{x,t}^{n} &=&\zeta _{x}+\int_{0}^{t}\Phi _{x}(\Xi
_{s}^{n})ds+\int_{0}^{t}\Psi _{x}(\Xi _{s}^{n})dW_{x,s},\ x\in \Lambda _{n},
\label{FinVolSystem} \\
\xi _{x,t}^{n} &=&\zeta _{x},\quad x\not\in \Lambda _{n},\ \ t\in \mathcal{T},  \notag
\end{eqnarray}%
\GeorgyMarkup{where $\bar{\zeta}=\{\zeta_{x}\}_{x\in\gamma}\in L^{p}_{\alpha}$, $\alpha\in\mathcal{A}$, is $\mathcal{F}_{0}$-measurable random initial condition} 
\begin{color}{black} and equality (\ref{FinVolSystem}) holds  for all $t\in \mathcal{T}$, $\mathbb{P}$-a.s. \end{color}
Observe that for each $n\in \mathbb{N}$ system (\ref{FinVolSystem}) is a
truncated version of our original stochastic system (\ref{MainSystem}).

\begin{theorem}
\label{FiniteVolumeLemma} For any $n\in \mathbb{N}$ system (\ref%
{FinVolSystem}) \GeorgyMarkup{admits a \begin{color}{black} unique (up to indistinguishability) \end{color} solution $\Xi ^{n}\in \mathcal{R}_{\alpha }^{p}$ 
with continuous sample paths.}
\end{theorem}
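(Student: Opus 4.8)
The plan is to treat the truncated system \eqref{FinVolSystem} as a finite-dimensional SDE in the variables $(\xi_{x,t}^{n})_{x\in\Lambda_n}$, with the components $\xi_{x,t}^n\equiv\zeta_x$ for $x\notin\Lambda_n$ acting as a fixed (random) parameter, and to apply the classical existence-and-uniqueness theory for SDEs with locally Lipschitz drift and globally Lipschitz diffusion, supplemented by a non-explosion argument coming from the dissipativity assumption \textbf{(D)}. First I would observe that, since $\Lambda_n$ is finite, only finitely many components are dynamic and each coefficient $\Phi_x$, $\Psi_x$ with $x\in\Lambda_n$ depends on finitely many coordinates $z_y$, $y\in\bar\gamma_x$; collecting the indices $y$ that appear, the system closes on a finite-dimensional Euclidean space $\mathbb{R}^{\Lambda_n}$ (treating the frozen $\zeta_y$ for $y\in\bar\gamma_x\setminus\Lambda_n$ as constants). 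By \textbf{(B)} and \textbf{(E)} the maps $\varphi_{xy}$, $\psi_{xy}$ are globally Lipschitz and of linear growth, and by \textbf{(C)} the self-interaction $\phi$ is continuous with polynomial growth, hence locally Lipschitz after noting that a dissipative continuous function of polynomial growth on $\mathbb{R}$ is locally Lipschitz is \emph{not} automatic — so I would instead only use that $\phi$ is continuous and of polynomial growth, which together with \textbf{(B)},\textbf{(E)} gives local existence and uniqueness of a maximal strong solution up to an explosion time $\tau_n$ via the standard Itô theory (e.g. truncating $\phi$ outside a ball and patching).

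The heart of the argument is then to show $\tau_n=\infty$ a.s., i.e. no explosion, and moreover to obtain the quantitative bound placing $\Xi^n$ in $\mathcal{R}_\alpha^p$. For this I would apply the Itô formula to $|\xi_{x,t}^n|^p$ for each fixed $x\in\Lambda_n$, using the dissipativity bound \eqref{cond-diss1} to control the drift contribution from $\phi$: writing $(\sigma-0)(\phi(\sigma)-\phi(0))\le b\sigma^2$ gives $\sigma\phi(\sigma)\le b\sigma^2+|\phi(0)||\sigma|$, so the dangerous high-degree term is sign-definite in the right direction. The cross terms $\varphi_{xy}$ and the diffusion $\psi_{xy}$ are handled by their linear growth bounds together with Young's inequality, producing on the right-hand side only powers of $|\xi_{y,t}^n|$ with $y\in\bar\gamma_x$ and constants depending on $n_x$. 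Taking expectations (first against a localizing sequence of stopping times to justify that the martingale part has zero mean, then passing to the limit by Fatou/monotone convergence) yields a closed system of integral inequalities for the quantities $u_{x}(t):=\mathbb{E}|\xi_{x,t}^n|^p$; summing against the weights $w(x)^{-1}=e^{-a|x|}$, the finiteness of $\Lambda_n$ makes the sum finite, and the logarithmic bound \eqref{logbound} on $n_x$ against the exponential weight keeps the coupling constants summable, so a Gronwall argument — here the \emph{ordinary} scalar Gronwall inequality suffices since $n$ is fixed and everything is finite — gives $\sup_{t\in\mathcal{T}}\mathbb{E}\|\Xi_t^n\|_{l_\alpha^p}^p<\infty$. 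In particular the solution does not explode, so $\Xi^n\in\mathcal{R}_\alpha^p$.

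For continuity of sample paths: the Itô integral term $\int_0^t\Psi_x(\Xi_s^n)\,dW_{x,s}$ admits a continuous version by Remark \ref{RII.4}, the Lebesgue integral $\int_0^t\Phi_x(\Xi_s^n)\,ds$ is continuous in $t$ whenever $s\mapsto\Phi_x(\Xi_s^n)$ is locally integrable (which follows from the moment bound just established together with \textbf{(C)} and $p\ge R$), and $\zeta_x$ is constant in $t$; hence each component $t\mapsto\xi_{x,t}^n$ has a continuous modification, and for $x\notin\Lambda_n$ it is constant. Progressive measurability is inherited from the construction. Uniqueness up to indistinguishability follows from pathwise uniqueness for the finite-dimensional locally Lipschitz SDE (again via a localization argument, comparing two solutions on $\{|\Xi^{n}|\le k\}\cap\{|\tilde\Xi^n|\le k\}$ and using the local Lipschitz property of $\phi$ and the global Lipschitz property of the $\varphi_{xy},\psi_{xy}$, then letting $k\to\infty$ using non-explosion of both solutions), which together with continuity of paths upgrades almost-sure equality for each $t$ to indistinguishability.

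The main obstacle I anticipate is the interplay between the polynomial growth of $\phi$ (degree $R$) and the integrability required of $\Phi_x(\Xi_s^n)$: one must choose $p\ge R$ precisely so that $\mathbb{E}|\phi(\xi_{x,s}^n)|$ is finite given the $p$-th moment bound, and the Itô formula applied to $|\cdot|^p$ must be justified in the presence of this non-Lipschitz drift — this is the point where the localization by stopping times $\tau_n^k:=\inf\{t:|\Xi_t^n|\ge k\}$ is essential, and one needs the dissipativity \textbf{(D)} to be sure the bound obtained before sending $k\to\infty$ does not blow up. Everything else is routine finite-dimensional stochastic analysis; the scale-of-Banach-spaces machinery and the generalized Gronwall inequality of the Appendix are \emph{not} needed here — they enter only in the passage $n\to\infty$, which is the subject of the subsequent sections.
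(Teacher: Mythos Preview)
Your approach is correct and matches the paper's, which is far terser: it simply cites \cite[Chapter 3]{LiuRockner} for the finite-dimensional part and then observes that $\xi_{x,t}^n=\zeta_x$ for $x\notin\Lambda_n$ immediately gives $\Xi^n\in\mathcal{R}_\alpha^p$ (since $\bar\zeta\in L_\alpha^p$ and the finitely many dynamic components have finite $p$-th moments by the standard theory). Your weighted-sum Gronwall argument for the $\mathcal{R}_\alpha^p$ inclusion is therefore more than what is needed here---as you yourself note at the end, no scale machinery is required for fixed $n$.

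One point worth flagging: your detour through ``$\phi$ continuous $\Rightarrow$ local existence by truncation, then local Lipschitz for pathwise uniqueness'' is not quite the right mechanism. The paper only assumes $\phi$ measurable, and condition \textbf{(D)} is a \emph{one-sided} Lipschitz (monotonicity) bound, not a two-sided one. The framework that handles this directly---existence, pathwise uniqueness, non-explosion, and moment bounds all at once---is the locally monotone/coercive theory of \cite[Chapter 3]{LiuRockner}, where \textbf{(D)} supplies local monotonicity, \textbf{(C)} and the linear growth of $\varphi_{xy},\psi_{xy}$ supply the growth and coercivity conditions, and no separate localization-by-stopping-times argument is needed. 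So rather than patching local Lipschitz estimates, you can (as the paper does) invoke that reference and be done.
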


\begin{proof}
 \GeorgyMarkup{The existence and \begin{color}{black} uniqueness \end{color} of continuous strong solutions of the non-trivial finite dimensional part of system (%
\ref{FinVolSystem}) is well-known, see \cite[Chapter 3]{LiuRockner}}. The inclusion $\Xi ^{n}\in \mathcal{R}_{\alpha }^{p}$
follows then from the fact that $\xi _{x,t}^{n}=\zeta _{x},\ t\in \mathcal{T}
$, for $x\not\in \Lambda _{n}$.
\end{proof}

\bigskip

Our next goal is to show that the sequence $\{\Xi ^{n}\}_{n\in \mathbb{N}}$
converges in $\mathcal{R}_{\beta }^{p}$ for any $\beta >\alpha $. We start
with the following uniform estimate, which is rather similar to the one from 
\cite{KKP}, adapted to the framework of the scale of Banach spaces using our
version of the Gronwall inequality.

\begin{theorem}
\label{TailTheorem1} Let $\Xi ^{n}=(\xi _{x}^{n})_{x\in \gamma }$, $n\in 
\mathbb{N}$, be the sequence of process defined by Theorem \ref%
{FiniteVolumeLemma}. Then for all $\beta >\alpha $ we have 
\begin{equation}
\sum_{x\in \gamma }e^{-\beta |x|}\sup_{n\in \mathbb{N}}\sup_{t\in \mathcal{T}%
}\mathbb{E}\bigg{[}|\xi _{x,t}^{n}|^{p}\bigg{]}<\infty .  \label{conv-ser}
\end{equation}
\end{theorem}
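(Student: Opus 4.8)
The plan is to derive, for each component $\xi_{x,t}^{n}$, a differential inequality for the quantity $u_{x,t}^{n}:=\mathbb{E}[|\xi_{x,t}^{n}|^{p}]$ by applying Itô's formula to $|\xi_{x,t}^{n}|^{p}$ and taking expectations, and then to package the resulting countable family of inequalities as a single inequality in the scale $\mathcal{L}^{1}$ to which the scale-version of Gronwall's lemma (Appendix \ref{sec-est-sol}) applies. First I would note that, because $\Xi^{n}\in\mathcal{R}_{\alpha}^{p}$ with $p\ge R\ge 2$, each $\xi_{x,\cdot}^{n}$ is a genuine Itô process (its drift $\Phi_{x}(\Xi^{n}_{s})$ is integrable by the growth bound (\textbf{C}) together with $p\ge R$, and its diffusion $\Psi_{x}(\Xi^{n}_{s})$ is square-integrable by the linear growth in (\textbf{E})), so Itô's formula is legitimate. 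Applying it to $\Theta(z)=|z|^{p}$ gives
\begin{equation*}
d|\xi_{x,t}^{n}|^{p}=p|\xi_{x,t}^{n}|^{p-2}\xi_{x,t}^{n}\,\Phi_{x}(\Xi_{t}^{n})\,dt+\tfrac{p(p-1)}{2}|\xi_{x,t}^{n}|^{p-2}\Psi_{x}(\Xi_{t}^{n})^{2}\,dt+p|\xi_{x,t}^{n}|^{p-2}\xi_{x,t}^{n}\Psi_{x}(\Xi_{t}^{n})\,dW_{x,t}.
\end{equation*}
Taking expectations kills the martingale term, and it remains to bound the drift and diffusion contributions from above.

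The key estimate is the dissipativity bound on the $\phi$-part. Writing $\Phi_{x}(\bar z)=\phi(z_{x})+\sum_{y\in\bar\gamma_{x}}\varphi_{xy}(z_{x},z_{y})$ and using (\textbf{D}) in the form $\sigma\phi(\sigma)\le b\sigma^{2}+\sigma\phi(0)$ (apply (\ref{cond-diss1}) with $\sigma_{2}=0$), I get $|\xi_{x}^{n}|^{p-2}\xi_{x}^{n}\phi(\xi_{x}^{n})\le b|\xi_{x}^{n}|^{p}+|\phi(0)||\xi_{x}^{n}|^{p-1}$, and the last term is absorbed via Young's inequality into a constant plus a small multiple of $|\xi_{x}^{n}|^{p}$. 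The pair-interaction terms and the diffusion term are handled purely by the linear growth bounds in (\textbf{B}) and (\textbf{E}): each $|\varphi_{xy}(z_{x},z_{y})|\le\bar a(1+|z_{x}|+|z_{y}|)$ and $\Psi_{x}(\bar z)^{2}\le (n_{x}M)^{2}(1+\max_{y\in\bar\gamma_{x}}|z_{y}|)^{2}\le n_{x}^{2}M^{2}\cdot 3\sum_{y\in\bar\gamma_{x}}(1+|z_{y}|^{2})$, so after Young's inequality (to trade $|\xi_{x}^{n}|^{p-2}$ and $|\xi_{x}^{n}|^{p-1}$ against $p$-th powers) I obtain
\begin{equation*}
\frac{d}{dt}\,u_{x,t}^{n}\le C_{1}u_{x,t}^{n}+C_{2}\,n_{x}^{2}\sum_{y\in\bar\gamma_{x}}\bigl(1+u_{y,t}^{n}\bigr),
\end{equation*}
with constants $C_{1},C_{2}$ depending only on $p,b,\bar a,M$. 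This is exactly the structure of a linear differential inequality with a ``hopping'' operator $(Au)_{x}=C_{2}n_{x}^{2}\sum_{y\in\bar\gamma_{x}}u_{y}$ plus a bounded inhomogeneity. Here the factor $n_{x}^{2}$ is the source of unboundedness, and this is exactly where the logarithmic bound (\textbf{A}), $n_{x}\le C(1+\log(1+|x|))$, together with the exponential weights $w(x)=e^{a|x|}$ pays off: the operator $A$ maps $l_{\alpha'}^{1}\to l_{\beta'}^{1}$ boundedly for $\alpha'<\beta'$ with operator norm $\le L(\beta'-\alpha')^{-q}$ for any $q>0$ (because $n_{x}^{2}e^{-(\beta'-\alpha')|x|}$ is bounded uniformly and in fact summable over the finite neighbourhoods, with a bound that is $o((\beta'-\alpha')^{-q})$ for every $q$) — this is the content of the Appendix estimates on linear operators in the scale.

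The main obstacle, and the crux of the proof, is to convert the component-wise differential inequalities into a bound that is uniform in both $n$ and $t$ and finite after weighting by $e^{-\beta|x|}$. Here I would invoke the scale-Gronwall inequality: fix $\alpha<\alpha'<\beta$ with $\bar\zeta\in L_{\alpha}^{p}$, so that $v_{t}^{n}:=(u_{x,t}^{n})_{x\in\gamma}$ satisfies $v_{t}^{n}\le v_{0}^{n}+\int_{0}^{t}(C_{1}\mathrm{Id}+A)v_{s}^{n}\,ds+\text{const}$ componentwise, with $v_{0}^{n}=(\mathbb{E}|\zeta_{x}|^{p})_{x}\in l_{\alpha}^{1}$ uniformly in $n$ (since $\xi_{x,0}^{n}=\zeta_{x}$ for all $x$); since the operator norm bound on $A$ has exponent $q<1$, the scale-Gronwall lemma yields a bound on $\Vert v_{t}^{n}\Vert_{l_{\beta}^{1}}$ that is finite, uniform in $t\in\mathcal{T}$, uniform in $n$, and depends only on $\Vert v_{0}\Vert_{l_{\alpha}^{1}}$ and $T$. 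Taking the supremum over $t$ and $n$ inside (justified because the bound is a fixed finite number independent of $n$ and $t$, so one can pass the sup through the sum by monotone convergence applied to the partial sums) gives precisely (\ref{conv-ser}). The delicate points to get right are: checking that Itô's formula applies despite $|z|^{p}$ not being $C^{2}$ at the origin when $p<2$ — but $p\ge R\ge 2$, so $|z|^{p}\in C^{2}(\mathbb{R})$ and there is no issue; and verifying that the inhomogeneous constant term, after weighting, sums to something finite, which holds because $\sum_{x}n_{x}^{2}e^{-\beta|x|}<\infty$ by (\textbf{A}) and local finiteness of $\gamma$.
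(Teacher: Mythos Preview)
Your approach is essentially the paper's: apply It\^o's formula to $|\xi_{x,t}^{n}|^{p}$, use dissipativity (\textbf{D}) on the $\phi$-part and linear growth on the interaction and diffusion parts, and arrive at the integral inequality
\[
\mathbb{E}|\xi_{x,t}^{n}|^{p}\le \mathbb{E}|\zeta_{x}|^{p}+C_{2}^{x}+C_{1}n_{x}^{2}\sum_{y\in\bar\gamma_{x}}\int_{0}^{t}\mathbb{E}|\xi_{y,s}^{n}|^{p}\,ds,
\]
to which the scale-Gronwall lemma (Lemma~\ref{gron111}) applies. One cosmetic slip: the coefficient you call $C_{1}$ in front of $u_{x,t}^{n}$ cannot be taken independent of $x$, because the interaction sum has $n_{x}$ terms; but since $x\in\bar\gamma_{x}$ this is absorbed into the hopping term, exactly as the paper does.

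There is, however, a real gap in your final step. The scale-Gronwall inequality gives, for each $n$ separately,
\[
\sum_{x\in\gamma}e^{-\beta|x|}\sup_{t\in\mathcal{T}}\mathbb{E}|\xi_{x,t}^{n}|^{p}\le K
\]
with the same $K$. But a uniform bound $\sum_{x}a_{x}^{n}\le K$ for all $n$ does \emph{not} imply $\sum_{x}\sup_{n}a_{x}^{n}<\infty$: take $a_{k}^{n}=\mathbf{1}_{\{k=n\}}$ on $\mathbb{N}$. Your justification (``the bound is a fixed finite number independent of $n$ and $t$, so one can pass the sup through the sum by monotone convergence applied to the partial sums'') is therefore insufficient as stated. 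The paper closes this gap by a short monotonization trick: set
\[
\eta_{x}^{n}(t):=\max_{m\le n}\mathbb{E}|\xi_{x,t}^{m}|^{p},
\]
observe that $\eta^{n}$ still satisfies the \emph{same} integral inequality (because the right-hand side is monotone in the unknown and the initial data $\theta_{x}=\mathbb{E}|\zeta_{x}|^{p}+C_{2}^{x}$ do not depend on $n$), apply Lemma~\ref{gron111} to $\eta^{n}$ to get the same bound $K$, and \emph{then} use that $n\mapsto\sup_{t}\eta_{x}^{n}(t)$ is genuinely nondecreasing so that monotone convergence legitimately exchanges $\lim_{n}$ with $\sum_{x}$. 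This extra step is brief but essential; with it your argument goes through.
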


\begin{proof}
It follows from the first part of Lemma \ref{two-bound} in Appendix (with $%
\xi ^{1}\equiv \xi ^{n}$) that for all $x\in \Lambda _{n}$ and $t\in 
\mathcal{T}$ we have 
\begin{equation}
\mathbb{E}\bigg{[}|\xi _{x,t}^{n}|^{p}\bigg{]}\leq \GeorgyMarkup{\mathbb{E}}|\zeta
_{x}|^{p}+C_{1}n_{x}^{2}\sum_{y\in \bar{\gamma}_{x}}\int_{0}^{t}\mathbb{E}%
\bigg{[}|\xi _{y,s}^{n}|^{p}\bigg{]}ds+\GeorgyMarkup{C_{2}^{x}}.  \label{ineq1111}
\end{equation}%
We remark that inequality above trivially holds for $x\notin \Lambda _{n}$,
because in this case $\xi _{x,t}^{n}=\zeta _{x}$ and all terms in the
right-hand side of the inequality are non-negative.

We now define a measurable map $\GeorgyMarkup{\eta}^{n}:\mathcal{T}\rightarrow
\GeorgyMarkup{l_{\alpha}^{1}}$ via the following formula 
\begin{equation*}
\GeorgyMarkup{\eta} _{x}^{n}(t)%
\coloneqq%
\max_{m\leq n}\ \mathbb{E}\bigg{[}|\xi _{x,t}^{m}|^{p}\bigg{]},\quad \forall
(t\in \mathcal{T}).
\end{equation*}%
It is immediate that its components satisfy inequality similar to (\ref%
{ineq1111}), that is,%
\begin{equation*}
\GeorgyMarkup{\eta}_{x}^{n}(t)\leq \GeorgyMarkup{\mathbb{E}}|\zeta _{x}|^{p}+C_{1}n_{x}^{2}\sum_{y\in \bar{\gamma%
}_{x}}\int_{0}^{t}\GeorgyMarkup{\eta}_{y}^{n}(s)ds+\GeorgyMarkup{C_{2}^{x}}.
\end{equation*}%
Set $\GeorgyMarkup{\theta_{x}}= \GeorgyMarkup{\mathbb{E}}|\zeta _{x}|^{p}+\GeorgyMarkup{C_{2}^{x}}$ and observe that $(\GeorgyMarkup{\theta_{x}})_{x\in \gamma }\in l_{\alpha }^{1}$. Then the map $\GeorgyMarkup{\eta}^{n}$
fulfills the conditions of Lemma \ref{gron111} in Appendix, which implies
that for all $n\in \mathbb{N}$ and $\beta >\alpha $ we have 
\begin{equation*}
\sum_{x\in \gamma }e^{-\beta |x|}\sup_{t\in \mathcal{T}}\GeorgyMarkup{\eta}
_{x}^{n}(t)\leq \GeorgyMarkup{K_{T}}(\alpha ,\beta )\sum_{x\in \gamma }e^{-\alpha
|x|}\GeorgyMarkup{\theta_{x}}<\infty .
\end{equation*}%
Observe that the left-hand side forms an increasing sequence, which implies
that it converges and 
\begin{equation*}
\lim_{n\rightarrow \infty }\sum_{x\in \gamma }e^{-\beta |x|}\sup_{t\in 
\mathcal{T}}\GeorgyMarkup{\eta}_{x}^{n}(t)\leq \GeorgyMarkup{K_{T}}(\alpha ,\beta )\sum_{x\in \gamma
}e^{-\alpha |x|}\GeorgyMarkup{\theta_{x}}<\infty .
\end{equation*}%
Then, for any finite set $\eta \subset \gamma $, we have%
\begin{equation*}
\sum_{x\in \eta }e^{-\beta |x|}\lim_{n\rightarrow \infty
}\sup_{t\in \mathcal{T}}\GeorgyMarkup{\eta}_{x}^{n}(t) = \lim_{n\rightarrow
\infty }\sum_{x\in \eta }e^{-\beta |x|}\sup_{t\in \mathcal{T}%
}\GeorgyMarkup{\eta}_{x}^{n}(t) 
\leq \GeorgyMarkup{K_{T}}(\alpha ,\beta )\sum_{x\in \gamma }e^{-\alpha |x|}\GeorgyMarkup{\theta_{x}}.
\end{equation*}%
On the other hand, it is clear that 
\begin{equation*}
\lim_{n\rightarrow \infty }\GeorgyMarkup{\eta}_{x}^{n}(t)=\sup_{n\in \mathbb{N}%
}\max_{m\leq n}\sup_{t\in \mathcal{T}}\ \mathbb{E}\bigg{[}|\xi
_{x,t}^{m}|^{p}\bigg{]}=\GeorgyMarkup{\sup_{n\in \mathbb{N}}\sup_{t\in \mathcal{T}}\mathbb{%
E}\bigg{[}|\xi _{x,t}^{n}|^{p}\bigg{]}}
\end{equation*}%
for any $x\in \gamma .$ Thus%
\begin{equation*}
\sum_{x\in \eta }e^{-\beta |x|}\sup_{t\in \mathcal{T}}\sup_{n\in \mathbb{N}}%
\mathbb{E}\bigg{[}\GeorgyMarkup{|\xi _{x,t}^{n}|^{p}}\bigg{]}\leq \GeorgyMarkup{K_{T}}(\alpha ,\beta
)\sum_{x\in \gamma }e^{-\alpha |x|}\GeorgyMarkup{\theta_{x}}.
\end{equation*}%
The latter inequality holds for all finite $\eta \subset \gamma $, which
implies that 
\begin{equation*}
\sum_{x\in \gamma }e^{-\beta |x|}\sup_{t\in \mathcal{T}}\sup_{n\in \mathbb{N}%
}\mathbb{E}\bigg{[}|\xi _{x,t}^{n}|^{p}\bigg{]}\leq \GeorgyMarkup{K_{T}}(\alpha ,\beta
)\sum_{x\in \gamma }e^{-\alpha |x|}\GeorgyMarkup{\theta_{x}},
\end{equation*}%
and the proof is complete.
\end{proof}

\begin{theorem}
\label{CauchySequenceTheorem} The sequence $\{\Xi ^{n}\}_{n\in \mathbb{N}}$
is Cauchy in $\mathcal{R}_{\beta }^{p}$ for any $\beta >\alpha $.
\end{theorem}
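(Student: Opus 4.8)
The plan is to mimic the proof of Theorem~\ref{TailTheorem1}, but now to apply our Gronwall-type estimate to the \emph{difference} of two truncations rather than to a single one. Fix $\beta>\alpha$ and pick an intermediate index $\beta_{0}$ with $\alpha<\beta_{0}<\beta$; such a choice is forced, because the uniform bound of Theorem~\ref{TailTheorem1} holds for any weight strictly exceeding $\alpha$, whereas Lemma~\ref{gron111} will be used with the ordered pair $\beta_{0}<\beta$. For $m\leq n$ (so that $\Lambda_{m}\subseteq\Lambda_{n}$) set
\begin{equation*}
v_{x}(t)\coloneqq\mathbb{E}\big[\,|\xi_{x,t}^{n}-\xi_{x,t}^{m}|^{p}\,\big],\qquad x\in\gamma,\ t\in\mathcal{T}.
\end{equation*}
Since $v_{x}(t)\leq 2^{p-1}\big(\mathbb{E}[|\xi_{x,t}^{n}|^{p}]+\mathbb{E}[|\xi_{x,t}^{m}|^{p}]\big)$, Theorem~\ref{TailTheorem1} gives $\sum_{x\in\gamma}e^{-\beta_{0}|x|}v_{x}(t)<\infty$ for every $t\in\mathcal{T}$, and each $t\mapsto v_{x}(t)$ is measurable, so $t\mapsto(v_{x}(t))_{x\in\gamma}$ is an admissible input for Lemma~\ref{gron111}.

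Next I would derive a recursive integral inequality for $(v_{x})_{x\in\gamma}$, distinguishing three regions of $\gamma$. For $x\in\Lambda_{m}$ both $\xi_{x}^{n}$ and $\xi_{x}^{m}$ satisfy the full equation~(\ref{MainSystem}) with the same initial value $\zeta_{x}$; subtracting the two equations and applying the second part of Lemma~\ref{two-bound} (with $\xi^{1}\equiv\xi^{n}$, $\xi^{2}\equiv\xi^{m}$) yields
\begin{equation*}
v_{x}(t)\leq C_{1}n_{x}^{2}\sum_{y\in\bar{\gamma}_{x}}\int_{0}^{t}v_{y}(s)\,ds,
\end{equation*}
with no additive constant, since the constants in the Lipschitz bounds for $\varphi_{xy},\psi_{xy}$ and in the dissipativity inequality~(\ref{cond-diss1}) for $\phi$ all cancel upon differencing. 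For $x\in\Lambda_{n}\setminus\Lambda_{m}$ one has $\xi_{x,t}^{m}=\zeta_{x}$, so $v_{x}(t)\leq 2^{p-1}\big(\mathbb{E}[|\xi_{x,t}^{n}|^{p}]+\mathbb{E}[|\zeta_{x}|^{p}]\big)\leq D_{x}$, where $D_{x}\coloneqq 2^{p-1}\big(\sup_{k\in\mathbb{N}}\sup_{s\in\mathcal{T}}\mathbb{E}[|\xi_{x,s}^{k}|^{p}]+\mathbb{E}[|\zeta_{x}|^{p}]\big)$; by Theorem~\ref{TailTheorem1} and $\bar{\zeta}\in L_{\alpha}^{p}\subseteq L_{\beta_{0}}^{p}$ we have $\sum_{x\in\gamma}e^{-\beta_{0}|x|}D_{x}<\infty$, and this bound is uniform in $n,m$. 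Finally $v_{x}\equiv0$ for $x\notin\Lambda_{n}$. Collecting the three cases, $(v_{x})_{x\in\gamma}$ satisfies the hypotheses of Lemma~\ref{gron111} with source term $\theta_{x}^{(n,m)}\coloneqq D_{x}\,\mathbf{1}_{\Lambda_{n}\setminus\Lambda_{m}}(x)$.

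Applying Lemma~\ref{gron111} with the pair $\beta_{0}<\beta$ then gives
\begin{align*}
\|\Xi^{n}-\Xi^{m}\|_{\mathcal{R}_{\beta}^{p}}^{p}
&=\sup_{t\in\mathcal{T}}\sum_{x\in\gamma}e^{-\beta|x|}v_{x}(t)
\leq\sum_{x\in\gamma}e^{-\beta|x|}\sup_{t\in\mathcal{T}}v_{x}(t)\\
&\leq K_{T}(\beta_{0},\beta)\sum_{x\in\gamma}e^{-\beta_{0}|x|}\theta_{x}^{(n,m)}
\leq K_{T}(\beta_{0},\beta)\sum_{x\in\gamma\setminus\Lambda_{m}}e^{-\beta_{0}|x|}D_{x}.
\end{align*}
Since $\sum_{x\in\gamma}e^{-\beta_{0}|x|}D_{x}<\infty$ and $\Lambda_{m}\uparrow\gamma$, the last sum is the tail of a convergent series and hence tends to $0$ as $m\to\infty$, uniformly in $n\geq m$. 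As the $\mathcal{R}_{\beta}^{p}$-norm is symmetric in $n,m$, the same estimate with $\Lambda_{m}$ replaced by $\Lambda_{\min(n,m)}$ holds for all $n,m$, and the Cauchy property follows.

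The main obstacle is producing the recursive inequality with the \emph{correct} source term. Two points demand care: (i) verifying that the source vanishes on $\Lambda_{m}$ — which hinges on the fact that passing to a difference removes every additive constant generated by the coefficient bounds, leaving only the homogeneous term (the analogue of~(\ref{ineq1111}) without its $C_{2}^{x}$ contribution); and (ii) obtaining, for $x\in\Lambda_{n}\setminus\Lambda_{m}$, a bound on $\mathbb{E}[|\xi_{x,t}^{n}|^{p}]$ that is uniform in $n$ and exponentially summable, which is precisely where Theorem~\ref{TailTheorem1} is invoked and why the intermediate index $\beta_{0}\in(\alpha,\beta)$ is needed. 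The remaining ingredients — the $n_{x}^{2}$ factor from a Cauchy--Schwarz sum over $\bar{\gamma}_{x}$, the Burkholder--Davis--Gundy control of the It\^o term, and the dissipativity handling of $\phi$ — are all furnished by Lemma~\ref{two-bound} and rely on $p\geq R\geq2$, which holds by assumption.
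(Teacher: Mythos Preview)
Your proposal is correct and follows essentially the same approach as the paper's proof: apply the second part of Lemma~\ref{two-bound} on the smaller truncation set to get a homogeneous recursive inequality, bound the difference crudely on the annulus between the two truncations using Theorem~\ref{TailTheorem1}, feed the combined inequality into Lemma~\ref{gron111} with an intermediate weight $\beta_{0}\in(\alpha,\beta)$, and recognise the resulting bound as the tail of the convergent series from Theorem~\ref{TailTheorem1}. The only cosmetic differences are the swapped roles of $n$ and $m$ and your inclusion of the redundant $\mathbb{E}[|\zeta_{x}|^{p}]$ in $D_{x}$ (already dominated by the $\sup_{k}$ term).
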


\begin{proof}
Let us fix $n,m\in \mathbb{N}$ and and assume, without loss of generality,
that $\Lambda _{n}\subset \Lambda _{m}$. We first consider the situation
where $x\in \Lambda _{n}$. It follows from the second part of Lemma \ref%
{two-bound} in Appendix (with $\xi ^{(1)}\equiv \xi ^{n}$ and $\xi
^{(2)}\equiv \xi ^{m}$) that for all $x\in \Lambda _{n}$ and $t\in \mathcal{T%
}$ we have%
\begin{align}
\mathbb{E}|\bar{\xi}_{x,t}^{n,m}|^{p}& \leq Bn_{x}^{2}\sum_{y\in \bar{\gamma}%
_{x}}\int_{0}^{t}\mathbb{E}|\bar{\xi}_{y,s}^{n,m}|^{p}ds,
\label{Theorem5EqnA} \\
\bar{\xi}_{x,t}^{n,m}& =\xi _{x,t}^{n}-\xi _{x,t}^{m}.  \label{xibar}
\end{align}

In the case where $x\in \Lambda _{m}\setminus \Lambda _{n}$ we see that for
all $t\in \mathcal{T}$ 
\begin{equation*}
|\bar{\xi}_{x,t}^{n,m}|^{p}\leq (|\xi _{x,t}^{n}|+|\xi _{x,t}^{m}|)^{p}\leq
2^{p-1}|\xi _{x,t}^{n}|^{p}+2^{p-1}|\xi _{x,t}^{m}|^{p},
\end{equation*}%
so that 
\begin{equation}
\mathbb{E}\bigg{[}|\bar{\xi}_{x,t}^{n,m}|^{p}\bigg{]}\leq 2^{p}\sup_{n\in 
\mathbb{N}}\mathbb{E}\bigg{[}|\xi _{x,t}^{n}|^{p}\bigg{]}\leq
2^{p}1_{\Lambda _{m}\setminus \Lambda _{n}}(x)\sup_{n\in \mathbb{N}%
}\sup_{t\in \mathcal{T}}\mathbb{E}\bigg{[}|\xi _{x,t}^{n}|^{p}\bigg{]}<\infty
\label{Theorem5EqnB}
\end{equation}%
(cf. Theorem \ref{TailTheorem1}). Combining equations (\ref{Theorem5EqnA})
and (\ref{Theorem5EqnB}) and taking into account that $\bar{\xi}%
_{x,t}^{n,m}=0$ for $x\notin \Lambda _{m}$, we obtain the inequality 
\begin{equation*}
\mathbb{E}\bigg{[}|\bar{\xi}_{x,t}^{n,m}|^{p}\bigg{]}\leq
B_{1}n_{x}^{2}\sum_{y\in \bar{\gamma}_{x}}\int_{0}^{t}\mathbb{E}\bigg{[}|%
\bar{\xi}_{y,s}^{n,m}|^{p}\bigg{]}ds+2^{p}1_{\Lambda _{m}\setminus \Lambda
_{n}}(x)\sup_{n\in \mathbb{N}}\sup_{t\in \mathcal{T}}\mathbb{E}\bigg{[}|\xi
_{x,t}^{n}|^{p}\bigg{]}
\end{equation*}%
for all $x\in \gamma $ and $t\in \mathcal{T}$ . We can now proceed as in the
proof of Theorem \ref{TailTheorem1}. Define a measurable map $\varrho ^{n,m}:%
\mathcal{T}\rightarrow l_{\alpha }^{1}$ via the formula 
\begin{equation*}
\varrho _{x}^{n,m}(t)%
\coloneqq%
\mathbb{E}\bigg{[}|\bar{\xi}_{x,t}^{n,m}|^{p}\bigg{]},\quad t\in \mathcal{T},
\end{equation*}%
and set 
\begin{equation*}
b_{x}=2^{p}1_{\Lambda _{m}\setminus \Lambda _{n}}(x)\sup_{n\in \mathbb{N}%
}\sup_{t\in \mathcal{T}}\mathbb{E}\bigg{[}|\xi _{x,t}^{n}|^{p}\bigg{]}.
\end{equation*}%
Obviously, \GeorgyMarkup{$(b_{x})_{x\in \gamma}\in l_{\alpha ^{\prime }}^{1}$
for any fixed $\alpha ^{\prime }\in (\alpha ,\beta)$.} It \GeorgyMarkup{therefore}
follows then from Lemma \ref{gron111} in Appendix that 
\begin{equation*}
\sum_{x\in \gamma }e^{-\beta |x|}\sup_{t\in \mathcal{T}}\varrho
_{x}^{n,m}(t)\leq \GeorgyMarkup{K_{T}}(\alpha ^{\prime },\beta )\sum_{x\in \gamma }e^{-\alpha
^{\prime }|x|}b_{x}.
\end{equation*}%
So we have shown that the following inequality holds: 
\begin{multline}
\Vert \Xi ^{n}-\Xi ^{m}\Vert _{\mathcal{R}_{\beta }^{p}}^{p}\leq
2^{p}\GeorgyMarkup{K_{T}}(\alpha ^{\prime },\beta )\sum_{x\in \Lambda _{m}\setminus \Lambda
_{n}}e^{-\alpha ^{\prime }|x|}\sup_{n\in \mathbb{N}}\sup_{t\in \mathcal{T}}%
\mathbb{E}\bigg{[}|\xi _{x,t}^{n}|^{p}\bigg{]}  \label{CauchyEqn} \\
\leq 2^{p}\GeorgyMarkup{K_{T}}(\alpha ^{\prime },\beta )\sum_{x\in \gamma \setminus\Lambda
_{n}}e^{-\alpha ^{\prime }|x|}\sup_{n\in \mathbb{N}}\sup_{t\in \mathcal{T}}%
\mathbb{E}\bigg{[}|\xi _{x,t}^{n}|^{p}\bigg{]}.
\end{multline}%
It follows from Theorem \ref{TailTheorem1} that the right hand side of (\ref%
{CauchyEqn}) is the remainder of the convergent series (\ref{conv-ser})
(with $\alpha ^{\prime }$ in place of $\beta $), which completes the proof.
\end{proof}

\subsection{One Dimensional Special Case \label{sec-1dspecialcase}}

We have shown in the previous section that, for any $\beta >\alpha $, the
sequence $\{\Xi ^{n}\}_{n\in \mathbb{N}}$ is Cauchy in the Banach space $%
\mathcal{R}_{\beta}^{p}$ and thus converges in this space. So we are now
in a position to define the process 
\begin{equation}
\overbrace{\ \Xi 
\coloneqq%
\lim_{n\rightarrow \infty }\Xi ^{n}\ }^{\text{in}\ \mathcal{R}_{\beta }^{p}}.
\label{mainprocess}
\end{equation}%
This process is a candidate for a solution of the system (\ref{MainSystem}).
A standard way to show this would be to pass to the limit on both sides of (%
\ref{FinVolSystem}). This approach requires however somewhat stronger
convergence than that in $\mathcal{R}_{\beta }^{p}$. We are going to
overcome this difficulty by considering special one-dimensional equations.

Consider an arbitrary $x\in \gamma $. It is convenient to consider elements
of $S^{\gamma }$ as pairs $(\sigma _{x},Z^{(x)})$, where $\sigma _{x}\in S$
and $Z^{(x)}=\left( z_{y}\right) _{y\in \gamma \backslash x}\in S^{\gamma
\backslash x}$. In these notations, we can write $\Phi _{x}(\Xi _{s})=\Phi
_{x}(\xi _{x,s},\Xi _{s}^{(x)})$ and $\Psi _{x}(\Xi _{s})=\Psi _{x}(\xi
_{x,s},\Xi _{s}^{(x)})$, where 
\GeorgyMarkup{
\begin{align}
\Xi ^{(x)}:=\left( \xi _{y}\right) _{y\in\gamma \backslash x}.	\label{Xiwithnox}
\end{align}}

Let \GeorgyMarkup{us} now fix process $\Xi $ defined by (\ref{mainprocess}) and consider the
following one-dimensional equation: 
\begin{equation}
\eta _{x,t}=\zeta _{x}+\int_{0}^{t}\Phi _{x}(\eta _{x,s},\Xi
_{s}^{(x)})ds+\int_{0}^{t}\Psi _{x}(\eta _{x,s},\Xi _{s}^{(x)})dW_{x}(s), \label{maineqn}
\end{equation}%
for all $t\in \mathcal{T} $, $\mathbb{P}$-a.s.  The main goal of this section is to prove that the equation (\ref{maineqn})
has a unique  solution $\eta _{x,t}$.

\begin{remark}
Note that, for a fixed $x\in \gamma $, the principal difference between
equations (\ref{maineqn}) and (\ref{MainSystem}) is that the process $\Xi $
is fixed in (\ref{maineqn}) and defined by the limit (\ref{mainprocess}),
which makes (\ref{maineqn}) a one-dimensional equation w.r.t. $\eta _{x}$.
\end{remark}

\noindent In order to establish the existence of a solution of equation (%
\ref{maineqn}) we need the following auxiliary result.

\begin{theorem}
\label{TAB} Let $x\in \gamma $ and $\xi _{x}$ be an $x$-component of the
process $\Xi $ defined by (\ref{mainprocess}). Then \GeorgyMarkup{sample paths of $\xi_{x}$ are a.s. continuous and}
\begin{equation}
\mathbb{E}\bigg{[}\sup_{t\in \mathcal{T}}|\xi _{x,t}|^{p}\bigg{]}<\infty .
\label{Esup}
\end{equation}
\end{theorem}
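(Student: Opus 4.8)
The plan is to obtain the continuity and the uniform moment bound \eqref{Esup} for the $x$-component $\xi_x$ of the limit process $\Xi$ as a consequence of the $\mathcal{R}^p_\beta$-convergence established in Theorem~\ref{CauchySequenceTheorem}, combined with a pathwise estimate for each approximant $\xi^n_x$ that is \emph{uniform in $n$}. First I would recall that each $\xi^n_x$, $x\in\Lambda_n$, solves the genuinely one-dimensional SDE \eqref{FinVolSystem}, whose drift and diffusion coefficients involve only the finitely many neighbouring components $\xi^n_y$, $y\in\bar\gamma_x$; hence $\xi^n_x$ has a.s.\ continuous sample paths (Theorem~\ref{FiniteVolumeLemma}). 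To control $\mathbb{E}[\sup_{t\in\mathcal T}|\xi^n_{x,t}|^p]$ I would apply It\^o's formula to $|\xi^n_{x,t}|^p$, take the supremum over $t$, and estimate the drift part using the dissipativity bound~(\textbf{D}) together with the growth bound~(\textbf{C}) on $\phi$ and the linear growth of $\varphi_{xy},\psi_{xy}$ from~(\textbf{B}),~(\textbf{E}); the stochastic term is handled by the Burkholder--Davis--Gundy inequality. This produces an estimate of the shape
\begin{equation*}
\mathbb{E}\Big[\sup_{s\le t}|\xi^n_{x,s}|^p\Big]\le C\Big(1+\mathbb{E}|\zeta_x|^p + n_x^2\sum_{y\in\bar\gamma_x}\int_0^t \mathbb{E}|\xi^n_{y,s}|^p\,ds\Big),
\end{equation*}
and the integral on the right is already controlled, uniformly in $n$ and in $s\in\mathcal T$, by Theorem~\ref{TailTheorem1} (note $n_x$ and $|\bar\gamma_x|$ are fixed finite numbers once $x$ is fixed). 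This yields $\sup_{n}\mathbb{E}[\sup_{t\in\mathcal T}|\xi^n_{x,t}|^p]<\infty$, which is a strictly stronger statement than the $\sup_t\mathbb{E}$ bound of Theorem~\ref{TailTheorem1}.

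Next I would upgrade from the approximants to the limit. Fix $x$. From Theorem~\ref{CauchySequenceTheorem}, $\Xi^n\to\Xi$ in $\mathcal{R}^p_\beta$, so in particular $\sup_{t\in\mathcal T}\mathbb{E}|\xi^n_{x,t}-\xi_{x,t}|^p\to0$; passing to a subsequence I may assume $\xi^{n}_{x,t}\to\xi_{x,t}$ for a.e.\ $t$, $\mathbb{P}$-a.s. To get the \emph{sup}-norm estimate for the limit, I would instead estimate $\mathbb{E}[\sup_{t\in\mathcal T}|\xi^n_{x,t}-\xi^m_{x,t}|^p]$ using the same It\^o/BDG argument applied to the difference $\bar\xi^{n,m}_{x,t}$ (cf.\ \eqref{xibar}), where now the drift difference is controlled by the \emph{Lipschitz} bound coming from~(\textbf{D}) (which gives one-sided Lipschitz behaviour of $\phi$, sufficient after applying It\^o to $|\bar\xi|^p$) and~(\textbf{B}); this leads to
\begin{equation*}
\mathbb{E}\Big[\sup_{s\le T}|\bar\xi^{n,m}_{x,s}|^p\Big]\le C\,n_x^2\sum_{y\in\bar\gamma_x}\int_0^T\mathbb{E}|\bar\xi^{n,m}_{y,s}|^p\,ds \;+\; C\,\mathbf{1}_{\Lambda_m\setminus\Lambda_n}(x)\sup_{k}\sup_{s}\mathbb{E}|\xi^k_{x,s}|^p,
\end{equation*}
and the right-hand side tends to $0$ as $n,m\to\infty$ by Theorem~\ref{TailTheorem1} and \eqref{CauchyEqn}. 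Hence $(\xi^n_x)_n$ is Cauchy in the Banach space $L^p(\Omega;C(\mathcal T))$ of processes with a.s.\ continuous paths and finite $\mathbb{E}[\sup_t|\cdot|^p]$; its limit is a.s.\ continuous and satisfies \eqref{Esup}. Finally I would note that this $C(\mathcal T)$-limit must coincide $\mathbb{P}\otimes dt$-a.e.\ with $\xi_{x,\cdot}$ (both are limits of $\xi^n_{x,\cdot}$ in weaker topologies), so, choosing the continuous version, $\xi_x$ has a.s.\ continuous sample paths and \eqref{Esup} holds.

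The main obstacle I expect is the It\^o-formula estimate applied to $|\xi^n_{x,t}|^p$ (and to $|\bar\xi^{n,m}_{x,t}|^p$) when $p>2$: one must carefully control the correction term $\tfrac{p(p-1)}{2}|\xi^n_{x,s}|^{p-2}\Psi_x(\Xi^n_s)^2$ and the drift term $p|\xi^n_{x,s}|^{p-2}\xi^n_{x,s}\Phi_x(\Xi^n_s)$, using~(\textbf{C})--(\textbf{E}) and Young's inequality, so that the resulting bound closes against the \emph{already established} uniform bound of Theorem~\ref{TailTheorem1} rather than against a quantity one still needs to control; the polynomial growth $|\phi(\sigma)|\le c(1+|\sigma|^R)$ with $p\ge R$ is exactly what makes $|\xi^n_{x,s}|^{p-2}\xi^n_{x,s}\phi(\xi^n_{x,s})$ integrable after absorbing a $|\xi^n_{x,s}|^{p}$-term via dissipativity~(\textbf{D}). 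A secondary technical point is justifying the BDG step, which is legitimate by Remark~\ref{RII.4} since each approximant lies in $\mathcal{R}^p_\alpha$ with $p\ge2$.
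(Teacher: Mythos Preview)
Your approach is essentially the same as the paper's: both show that $(\xi^n_x)_n$ is Cauchy in $L^p(\Omega;C(\mathcal{T}))$ by applying It\^o's formula to $|\bar\xi^{n,m}_{x,t}|^p$, controlling the stochastic term via BDG, and then invoking the already-established convergence of Theorem~\ref{CauchySequenceTheorem} for the finitely many neighbours $y\in\bar\gamma_x$; continuity and \eqref{Esup} then follow from completeness of $L^p(\Omega;C(\mathcal{T}))$ and identification of the limit with $\xi_x$.

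There is one technical difference worth flagging. After BDG the paper pushes the expectation inside the square root by Jensen, which produces terms of the form $\sqrt{\sup_t\mathbb{E}|\bar\xi^{n,m}_{y,t}|^{2p}}$ and thus tacitly relies on the $2p$-moment version of Theorem~\ref{CauchySequenceTheorem}. Your route (the displayed inequality with only $p$-th moments on the right) is achievable instead by the standard absorption trick: factor $\sup_s|\bar\xi^{n,m}_{x,s}|^{p/2}$ out of the quadratic variation, use Young's inequality to absorb $\tfrac12\mathbb{E}\sup_s|\bar\xi^{n,m}_{x,s}|^{p}$ into the left-hand side, and then bound $|\bar\xi_{x,s}|^{p-2}|\Psi^{n,m}_x(s)|^2$ by $p$-th powers via Young again. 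This keeps everything at the $p$-moment level and is slightly cleaner than the paper's argument. Your preliminary Step~1 (the uniform bound $\sup_n\mathbb{E}[\sup_t|\xi^n_{x,t}|^p]<\infty$) is correct but redundant once Step~2 is established.
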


\begin{proof}
\GeorgyMarkupx{It is sufficient to show that, for a fixed $x\in \gamma $, the sequence $%
\left\{ \xi _{x}^{n}\right\} _{n\in \mathbb{N}}$, is Cauchy in the norm $%
\left(\mathbb{E}\bigg{[}\sup_{t\in \mathcal{T}}|\cdot |^{p}\bigg{]}\right) ^{1/p}$ because then there exists a subsequence $\left\{ \xi _{x}^{n_{k}}\right\} _{k\in \mathbb{N}}$ such that 
\begin{align}
\lim_{k\to\infty}\sup_{t\in \mathcal{T}}|\xi_{x,t}^{n_{k}} - \xi_{x,t}| = 0,\  \mathbb{P}-a.s.,   \nonumber
\end{align}
which, \begin{color}{black} together with the path-continuity of processes $\xi^{n_{k}}_{x,t}$, \end{color} implies the statement of the theorem.}

Fix $\bar{N}\in \mathbb{N}$ such that $x\in \Lambda _{\bar{N}}$ and $n,m\geq 
\bar{N}$ and assume, without loss of generality, that $n<m$ so that $x\in
\Lambda _{n}\subset \Lambda _{m}$ Consider the process $\bar{\xi}%
_{x,t}^{n,m} $ defined in (\ref{xibar}) and proceed as in Appendix, Lemma %
\ref{two-bound}, with $\xi ^{(1)}\equiv \xi ^{n}$ and $\xi ^{(2)}\equiv \xi
^{m}$. Taking $\sup_{t\in \mathcal{T}}$ of both sides of the equality (\ref%
{form222}) we obtain the bound 
\begin{equation}
\mathbb{E}\bigg{[}\sup_{t\in \mathcal{T}}|\bar{\xi}_{x,t}^{n,m}|^{p}\bigg{]}%
\leq K+\mathbb{E}\bigg{[}\sup_{t\in \mathcal{T}}\int_{0}^{t}p(\bar{\xi}%
_{x,s}^{n,m})^{p-1}\Psi _{x}^{n,m}(s)dW_{x}(s)\bigg{]},  \label{AA5}
\end{equation}%
where 
\begin{equation}
K%
\coloneqq%
Bn_{x}^{2}\sum_{y\in \bar{\gamma}_{x}}\int_{0}^{T}\mathbb{E}\bigg{[}|\bar{\xi%
}_{y,s}^{n,m}|^{p}\bigg{]}ds\leq Bn_{x}^{2}T\GeorgyMarkup{\sum_{y\in \bar{\gamma}_{x}}}\sup_{t\in \mathcal{T}}\mathbb{E}%
\bigg{[}|\bar{\xi}_{y,t}^{n,m}|^{p}\bigg{]}  \label{K}
\end{equation}%
and%
\begin{equation*}
\Psi _{x}^{n,m}(s):=\GeorgyMarkup{\Psi _{x}(\Xi _{s}^{n})-\Psi _{x}(\Xi
_{s}^{m}).}
\end{equation*}%
Now using first the Burkholder-Davis-Gundy inequality \GeorgyMarkup{(see \cite{RocknerBDG})} and then the Jensen
inequality we see that the following estimate on the stochastic term from (%
\ref{AA5}) holds. 
\begin{multline}
\mathbb{E}\bigg{[}\sup_{t\in \mathcal{T}}\int_{0}^{t}p(\bar{\xi}%
_{x,s}^{n,m})^{p-1}\Psi _{x}^{n,m}(s)dW_{x}(s)\bigg{]}\leq \mathbb{E}\bigg{[}%
\bigg{(}\int_{0}^{t}\bigg{(}p(\bar{\xi}_{x,s}^{n,m})^{p-1}\Psi _{x}^{n,m}(s)%
\bigg{)}^{2}ds\bigg{)}^{\frac{1}{2}}\bigg{]} \\
\leq \bigg{(}\mathbb{E}\bigg{[}\int_{0}^{t}\bigg{(}p(\bar{\xi}%
_{x,s}^{n,m})^{p-1}\Psi _{x}^{n,m}(s)\bigg{)}^{2}ds\bigg{]}\bigg{)}^{\frac{1%
}{2}}.  \label{AA6}
\end{multline}%
The integrand in the right-hand side of the above inequality can be
estimated in a similar way as (\ref{ConditionCConsequence}), so that we
obtain 
\begin{equation*}
\bigg{(}(\bar{\xi}_{x,t}^{n,m})^{p-1}\Psi _{x}^{n,m}(t)\bigg{)}^{2}\leq
2M^{2}\GeorgyMarkup{(n_{x}+1)^{2}}|\bar{\xi}_{x,t}^{n,m}|^{2p}+2M^{2}\GeorgyMarkup{n_{x}^{2}}\sum_{y\in \bar{%
\gamma}_{x}}|\bar{\xi}_{y,t}^{n,m}|^{2p}.
\end{equation*}%
It follows now that inequality (\ref{AA6}) can be written in the following
way: 
\begin{multline*}
\mathbb{E}\bigg{[}\sup_{t\in \mathcal{T}}\int_{0}^{t}p(\bar{\xi}%
_{x,s}^{n,m})^{p-1}\Psi _{x}^{n,m}(s)dW_{x}(s)\bigg{]}\leq C_{1}\GeorgyMarkup{\sqrt{\sup_{t\in 
\mathcal{T}}\mathbb{E}\bigg{[}|\bar{\xi}_{x,t}^{n,m}|^{2p}\bigg{]}}} 
+C_{2}\GeorgyMarkup{\sqrt{\sum_{y\in \bar{\gamma}_{x}}\sup_{t\in \mathcal{T}}\mathbb{E}\bigg{[}|%
\bar{\xi}_{y,t}^{n,m}|^{2p}\bigg{]}}},
\end{multline*}%
where 
\begin{equation*}
C_{1}%
\coloneqq%
\GeorgyMarkup{\sqrt{2p^{2}M^{2}(1 + n_{x})^{2}T}}\quad \text{and}\quad C_{2}%
\coloneqq%
\GeorgyMarkup{\sqrt{2p^{2}M^{2}n_{x}^{2}T}}.
\end{equation*}%
Therefore returning to inequalities (\ref{AA5}) and (\ref{K}) we see that 
\begin{multline}
\mathbb{E}\bigg{[}\sup_{t\in \mathcal{T}}|\bar{\xi}_{x,t}^{n,m}|^{p}\bigg{]}%
\leq Bn_{x}^{2}T\GeorgyMarkup{\sum_{y\in \bar{\gamma}_{x}}}\sup_{t\in \mathcal{T}}\mathbb{E}\bigg{[}|\bar{\xi}%
_{y,t}^{n,m}|^{p}\bigg{]}  \label{AA7} \\
+C_{1}\GeorgyMarkup{\sqrt{\sup_{t\in \mathcal{T}}\mathbb{E}\bigg{[}|\bar{\xi}_{x,t}^{n,m}|^{2p}%
\bigg{]}}}+C_{2}\GeorgyMarkup{\sqrt{\sum_{y\in \bar{\gamma}_{x}}\sup_{t\in \mathcal{T}}\mathbb{E}%
\bigg{[}|\bar{\xi}_{y,t}^{n,m}|^{2p}\bigg{]}}}.
\end{multline}%
Since $\bar{\gamma}_{x}$ is finite we can now use Theorem \ref%
{CauchySequenceTheorem} to conclude that, with a suitable choice of $n,m\in 
\mathbb{N}$, the right hand side of the inequality (\ref{AA7}) above can be
made arbitrary small hence the proof is complete.
\end{proof}

\begin{theorem}
\label{globaltheorem} Equation (\ref{maineqn}) admits a unique 
solution.
\end{theorem}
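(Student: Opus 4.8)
The plan is to read (\ref{maineqn}) as a \emph{one-dimensional} It\^o equation for the single unknown process $\eta_{x}$, with the frozen family $\Xi^{(x)}$ of (\ref{Xiwithnox}) in the role of a given, continuous, progressively measurable datum: put $F(\omega,s,\sigma)\coloneqq\Phi_{x}(\sigma,\Xi^{(x)}_{s}(\omega))$ and $G(\omega,s,\sigma)\coloneqq\Psi_{x}(\sigma,\Xi^{(x)}_{s}(\omega))$. By Theorem~\ref{TAB} every component $\xi_{y}$, $y\in\gamma_{x}$, has continuous sample paths and $\mathbb{E}[\sup_{t\in\mathcal{T}}|\xi_{y,t}|^{p}]<\infty$, and since $\bar{\gamma}_{x}$ is finite it follows that, uniformly in $(\omega,s)$: (i) $G(\omega,s,\cdot)$ is globally Lipschitz in $\sigma$, with a constant which is a multiple of $n_{x}M$ (condition (\textbf{E})); (ii) $F(\omega,s,\cdot)$ is one-sided Lipschitz, $(\sigma_{1}-\sigma_{2})(F(\omega,s,\sigma_{1})-F(\omega,s,\sigma_{2}))\le\lambda_{x}(\sigma_{1}-\sigma_{2})^{2}$, obtained by combining the dissipativity bound (\ref{cond-diss1}) for $\phi$ with the Lipschitz bound for the $\varphi_{xy}$ in (\textbf{B}); and (iii) $F$ is of coercive type, $\sigma F(\omega,s,\sigma)\le C(1+\sigma^{2})+C\max_{y\in\bar{\gamma}_{x}}|\xi_{y,s}|^{2}$, with $|F(\omega,s,\sigma)|\le c(1+|\sigma|^{R})+C(1+|\sigma|+\max_{y\in\bar{\gamma}_{x}}|\xi_{y,s}|)$, using (\ref{cond-diss1}), (\ref{bound1}) and the linear-growth bound in (\textbf{B}). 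The structural point is that, although $\phi$ obeys only the polynomial growth (\ref{bound1}), the one-sided Lipschitz bound (\ref{cond-diss1}) prevents finite-time explosion.

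\emph{Uniqueness.} Let $\eta^{(1)},\eta^{(2)}$ be two continuous solutions of (\ref{maineqn}); both start at $\zeta_{x}$, so $\delta_{t}\coloneqq\eta^{(1)}_{t}-\eta^{(2)}_{t}$ is a continuous semimartingale with $\delta_{0}=0$. Applying It\^o's formula to $\delta_{t}^{2}$ and using (i), (ii) gives $\delta_{t}^{2}\le(2\lambda_{x}+L^{2})\int_{0}^{t}\delta_{s}^{2}\,ds+2\int_{0}^{t}\delta_{s}(G(s,\eta^{(1)}_{s})-G(s,\eta^{(2)}_{s}))\,dW_{x}(s)$, with $L$ the Lipschitz constant of $G$. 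Stopping at $\tau_{N}\coloneqq\inf\{t:|\eta^{(1)}_{t}|\vee|\eta^{(2)}_{t}|\vee\max_{y\in\gamma_{x}}|\xi_{y,t}|\ge N\}$ makes the stochastic integral a true martingale, so $\mathbb{E}[\delta^{2}_{t\wedge\tau_{N}}]\le(2\lambda_{x}+L^{2})\int_{0}^{t}\mathbb{E}[\delta^{2}_{s\wedge\tau_{N}}]\,ds$, and Gronwall's lemma forces $\delta_{t\wedge\tau_{N}}=0$. Since the paths of $\eta^{(1)},\eta^{(2)}$ and of the finitely many $\xi_{y}$ are continuous, $\tau_{N}\uparrow\infty$ almost surely, hence $\delta\equiv0$ and the two solutions are indistinguishable.

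\emph{Existence.} I would obtain the solution by truncating the drift. For each $k\in\mathbb{N}$ replace $\phi$ by a globally Lipschitz $\phi^{(k)}$ that agrees with $\phi$ on $[-k,k]$ and still satisfies (\ref{cond-diss1}) with the same constant $b$ together with a polynomial growth bound (such a choice is possible because (\ref{cond-diss1}) makes $\sigma\mapsto b\sigma-\phi(\sigma)$ monotone, so $\phi$ is locally bounded and admits such an extension; in the degenerate case where $\phi$ restricted to $[-k,k]$ is not Lipschitz one regularizes $\phi$ instead and replaces the patching step below by a monotonicity-based limit passage). With $\phi$ replaced by $\phi^{(k)}$, all coefficients of (\ref{maineqn}) are globally Lipschitz in $\sigma$ uniformly in $(\omega,s)$, so it has a unique continuous strong solution $\eta^{(k)}$ by classical theory, see \cite[Chapter 3]{LiuRockner}. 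The heart of the matter is then the a priori bound: applying It\^o's formula to $|\eta^{(k)}_{t}|^{p}$ (recall $p\ge R\ge2$), using (\ref{cond-diss1}) for the $\phi^{(k)}$-contribution with its \emph{$k$-independent} constant, the linear-growth bounds in (\textbf{B}), (\textbf{E}), the Burkholder--Davis--Gundy inequality, and $\sup_{t\in\mathcal{T}}\mathbb{E}[\max_{y\in\bar{\gamma}_{x}}|\xi_{y,t}|^{p}]<\infty$ from Theorem~\ref{TAB}, one gets, by a computation in the spirit of Lemma~\ref{two-bound}, $\mathbb{E}[\sup_{s\le t}|\eta^{(k)}_{s}|^{p}]\le C_{1}+C_{2}\int_{0}^{t}\mathbb{E}[\sup_{r\le s}|\eta^{(k)}_{r}|^{p}]\,ds$ with $C_{1},C_{2}$ independent of $k$, whence $\sup_{k}\mathbb{E}[\sup_{t\in\mathcal{T}}|\eta^{(k)}_{t}|^{p}]\le C<\infty$ by Gronwall. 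Consequently the exit times $\tau_{k}\coloneqq\inf\{t:|\eta^{(k)}_{t}|\ge k\}$ satisfy $\mathbb{P}(\tau_{k}\le T)\le Ck^{-p}\to0$; on $[0,\tau_{k}]$ the process $\eta^{(k)}$ solves the untruncated equation (\ref{maineqn}), by the uniqueness just proved the $\eta^{(k)}$ are consistent on the increasing intervals $[0,\tau_{k}]$, and patching them produces a continuous adapted process $\eta_{x}$ defined on all of $\mathcal{T}$ almost surely and solving (\ref{maineqn}); Fatou's lemma and the uniform bound add $\mathbb{E}[\sup_{t\in\mathcal{T}}|\eta_{x,t}|^{p}]<\infty$ (consistent with Theorem~\ref{TAB}).

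\emph{Main obstacle.} The delicate half is existence, and within it the uniform-in-$k$ a priori estimate: one has to use the dissipativity (\ref{cond-diss1}) precisely to absorb the polynomially growing drift $\phi$ and close the Gronwall loop --- the one-dimensional, no-spatial-cutoff analogue of Lemmas~\ref{two-bound} and \ref{gron111} --- together with the care needed to approximate $\phi$ by globally Lipschitz functions without spoiling (\ref{cond-diss1}).
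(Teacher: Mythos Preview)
Your argument is correct and rests on the same core ingredients as the paper's proof: the one-sided Lipschitz condition (\ref{cond-diss1}), the finiteness of $\bar\gamma_x$, and the moment bound of Theorem~\ref{TAB}, assembled into a Gronwall loop that yields a localisation-independent a~priori estimate on $\mathbb{E}|\eta_{x,\cdot}|^p$ and hence non-explosion.

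The route differs slightly. The paper does not truncate $\phi$; instead it invokes \cite[Proposition~2.9]{ABW} to obtain directly a unique \emph{local maximal} solution $\eta_x$ together with the exit times $\tau_n=\inf\{t:|\eta_{x,t}|\ge n\}$, applies It\^o's formula to $|\eta_{x,t\wedge\tau_n}|^p$, and bounds the drift and diffusion contributions via Lemma~\ref{DriftLemma} to arrive at $\mathbb{E}[|\eta_{x,t\wedge\tau_n}|^p]\le K+D\int_0^t\mathbb{E}[|\eta_{x,s\wedge\tau_n}|^p]\,ds$ with $K,D$ independent of $n$ (this is where Theorem~\ref{TAB} enters, through the constant $A_x$). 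Gronwall then gives $\mathbb{P}(\tau_n<t)\le Ke^{Dt}n^{-p}\to0$, so $\tau_n\uparrow T$ a.s. Your truncation-and-patching scheme reaches the same conclusion by the mirror-image argument; the paper's version sidesteps the technical point you correctly flag, namely the construction of a globally Lipschitz $\phi^{(k)}$ that preserves the dissipativity constant when $\phi$ is a~priori only measurable. Either way, the substantive estimate is identical.
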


\begin{proof}
By standard arguments, see e.g. \cite[Proposition 2.9]{ABW}, we conclude
that equation (\ref{maineqn}) admits a unique local maximal solution $\eta
_{x}$ such that \begin{color}{black}
\begin{equation*}
\eta _{x,t\wedge \tau _{n}}=\zeta _{x}+\int_{0}^{t\wedge \tau _{n}}\Phi
_{x}(\eta _{x,s\wedge \tau _{n}},\Xi _{s\wedge \tau
_{n}}^{(x)})ds+\int_{0}^{t\wedge \tau _{n}}\Psi _{x}(\eta _{x,s\wedge \tau
_{n}},\Xi _{s\wedge \tau _{n}}^{(x)})dW_{x}(s),
\end{equation*}%
for all $t\in \mathcal{T} $, $\mathbb{P}$-a.s.  \end{color}  Here \GeorgyMarkup{$\Xi^{(x)}_{s\land\tau_{n}}$ is as in (\ref{Xiwithnox})} and by construction, for all $n\in \mathbb{N}$, stopping time $\tau _{n}$
is the first exit time of $\eta _{x}$ from the interval $(-n,\ n)$, defined
as%
\begin{equation*}
\tau _{n}=\left\{ 
\begin{array}{c}
T,\text{ if }\left\vert \GeorgyMarkup{\eta_{x,t}}\right\vert <n,~t\in \lbrack 0,T] \\ 
\inf \left\{ t\in \lbrack 0,T]:\left\vert \GeorgyMarkup{\eta _{x,t}}\right\vert \geq
n\right\} ,\ \text{otherwise}%
\end{array}%
\right.
\end{equation*}
Hence to complete the proof it is sufficient to establish that almost surely 
$\lim_{n\rightarrow \infty }\tau _{n}=T$. We will prove this fact along the
lines of \cite[Theorem 3.1]{ABW}, using the bound (\ref{Esup}). We begin by
using the Itô Lemma to establish the equality 
\GeorgyMarkupy{
\begin{multline*}
|\eta _{x,t\wedge \tau _{n}}|^{p}=\int_{0}^{t\wedge \tau _{n}}p(\eta
_{x,s\wedge \tau _{n}})^{p-1}\Phi _{x}(\eta _{x,s\wedge \tau _{n}},\Xi
_{s\wedge \tau _{n}}^{(x)})ds+ \\
+\int_{0}^{t\wedge \tau _{n}}\frac{p(p-1)}{2}(\eta _{x,s\wedge \tau
_{n}})^{p-2}(\Psi _{x}(\eta _{x,s\wedge \tau _{n}},\Xi _{s\wedge \tau
_{n}}^{(x)}))^{2}ds+ \\
+\int_{0}^{t\wedge \tau _{n}}p(\eta _{x,s\wedge \tau _{n}})^{p-1}\Psi
_{x}(\eta _{x,s\wedge \tau _{n}},\Xi _{s\wedge \tau _{n}}^{(x)})dW_{x}(s),
\end{multline*}}
for all $t\in \mathcal{T}$. Before proceeding we define for convenience the
following shorthand notations: 
\begin{align*}
\bar{\Phi}_{x}^{p}(\eta ,t)& :=(\eta _{x,t\wedge \tau _{n}})^{p-1}\Phi
_{x}(\eta _{x,t\wedge \tau _{n}},\Xi _{t\wedge \tau _{n}}^{(x)}), \\[0.01in]
\bar{\Psi}_{x}^{p}(\eta ,t)& :=(\eta _{x,t\wedge \tau _{n}})^{p-2}(\Psi
_{x}(\eta _{x,t\wedge \tau _{n}},\Xi _{t\wedge \tau _{n}}^{(x)}))^{2}, \\
\GeorgyMarkup{w_{x}}&\GeorgyMarkup{\coloneqq b+\frac{1}{2} + 4\bar{a}^{2}n_{x}^{2}}, \\
\GeorgyMarkup{u_{x}}&\GeorgyMarkup{\coloneqq c+\bar{a}n_{x}.}
\end{align*}%
An application of Lemma \ref{DriftLemma} in the Appendix shows that for all $%
t\in \mathcal{T}$ we have 
\GeorgyMarkupy{
\begin{multline}
\bar{\Phi}_{x}^{p}(\eta ,t)\leq \left\vert \eta _{x,t\wedge \tau
_{n}}\right\vert ^{p-2}  \label{BB2} \left( \GeorgyMarkup{w_{x}}|\eta _{x,t\wedge \tau _{n}}|^{2}+\frac{1}{2}%
\bar{a}^{2}n_{x}\sum_{y\in \gamma _{x}}|\xi _{y,t}|^{2}+\left\vert \eta
_{x,t\wedge \tau _{n}}\right\vert \GeorgyMarkup{u_{x}} \right) \\
\leq \GeorgyMarkup{w_{x}}|\eta _{x,t\wedge \tau _{n}}|^{p}+\frac{1}{2}\bar{a}%
^{2}n_{x}(\eta _{x,t\wedge \tau _{n}})^{p-2}\sum_{y\in \gamma _{x}}|\xi
_{y,t\wedge \tau _{n}}|^{2} + \left\vert \eta _{x,t\wedge \tau
_{n}}\right\vert ^{p-1}\GeorgyMarkup{u_{x}} \\
\leq (\GeorgyMarkup{w_{x}+ 2^{p-1}u_{x}})|\eta _{x,t\wedge \tau _{n}}|^{p}+\frac{1}{2}%
\bar{a}^{2}n_{x}\left\vert \eta _{x,t\wedge \tau _{n}}\right\vert
^{p-2}\sum_{y\in \gamma _{x}}|\xi _{y,t\wedge \tau _{n}}|^{2} + 2^{p-1}\GeorgyMarkup{u_{x}},
\end{multline}}
where constants $b$ and $c$ are defined in Assumption \ref{mainass}. In the
last inequality we used the simple estimate $C^{p-1}\leq (1+C)^{p-1}\leq
(1+C)^{p}\leq 2^{p-1}(1+C^{p})$ for any $C>0$, which holds because $p>1$. We
can now use the H\"{o}lder inequality and classical estimate $\left(
\sum_{k=1}^{m}a_{k}\right) ^{N}\leq m^{N-1}\sum_{k=1}^{m}a_{k}^{N}$ (see
e.g. \cite{Jameson}) in conjunction with inequality (\ref{BB2}) above to see
that for all $t\in \mathcal{T}$ we have 
\begin{multline*}
\mathbb{E}\bigg{[}\bar{\Phi}_{x}^{p}(\eta ,t)\bigg{]}\leq (\GeorgyMarkup{w_{x}+
2^{p-1}u_{x}})\mathbb{E}\bigg{[}|\eta _{x,t\wedge \tau _{n}}|^{p}\bigg{]} \\
+\frac{1}{2}\bar{a}^{2}n_{x}\bigg{(}\mathbb{E}\bigg{[}|\eta _{x,t\wedge \tau
_{n}}|^{p}\bigg{]}\bigg{)}^{\frac{p-2}{p}}\bigg{(}\mathbb{E}\bigg{[}\bigg{(}%
\sum_{y\in \gamma _{x}}|\xi _{y,t\wedge \tau _{n}}|^{2}\bigg{)}^{\frac{p}{2}}%
\bigg{]}\bigg{)}^{\frac{2}{p}}+2^{p-1}\GeorgyMarkup{u_{x}} \\
\leq (\GeorgyMarkup{w_{x}+2^{p-1}u_{x}})\mathbb{E}\bigg{[}|\eta _{x,t\wedge \tau
_{n}}|^{p}\bigg{]} \\
+\frac{1}{2}\bar{a}^{2}n_{x}\bigg{(}1+\mathbb{E}\bigg{[}|\eta _{x,t\wedge
\tau _{n}}|^{p}\bigg{]}\bigg{)}n_{x}^{\frac{p-2}{p}}\bigg{(}\mathbb{E}%
\bigg{[}\sum_{y\in \gamma _{x}}|\xi _{y,t\wedge \tau _{n}}|^{p}\bigg{]}%
\bigg{)}^{\frac{2}{p}}+2^{p-1}\GeorgyMarkup{u_{x}}.
\end{multline*}%
In a similar way, we obtain the inequality%
\begin{multline*}
\bar{\Psi}_{x}^{p}(\eta ,t)\leq \GeorgyMarkup{3(M^{2}(n_{x}+1)^{2}+M^{2}n_{x}^{2}2^{p-1})}|\eta _{x,t\wedge
\tau _{n}}|^{p} \\
+3M^{2}n_{x}^{2}(\eta _{x,t\wedge \tau _{n}})^{p-2}\sum_{y\in \gamma
_{x}}|\xi _{y,t\wedge \tau _{n}}|^{2}+\GeorgyMarkup{3M^{2}n_{x}^{2}2^{p-1}}.
\end{multline*}%
Setting 
\begin{equation*}
A_{x}:=\max \bigg{\{}\frac{1}{2}\bar{a}^{2}n_{x}^{1+\frac{p-2}{p}},\
3M^{2}n_{x}^{2}\bigg{\}}\bigg{(}\sum_{y\in \gamma _{x}}\mathbb{E}\bigg{[}%
\sup_{t\in \mathcal{T}}|\xi _{y,t}|^{p}\bigg{]}\bigg{)}^{\frac{2}{p}},
\end{equation*}%
we get the bounds%
\begin{equation*}
\mathbb{E}\bigg{[}\bar{\Phi}_{x}^{p}(\eta ,t)\bigg{]}\leq (\GeorgyMarkup{w_{x}+2^{p-1}u_{x}}+A_{x})\mathbb{E}\bigg{[}|\eta _{x,t\wedge \tau _{n}}|^{p}\bigg{]}%
+A_{x}+2^{p-1}\GeorgyMarkup{u_{x}}
\end{equation*}%
and 
\begin{equation*}
\mathbb{E}\bigg{[}\bar{\Psi}_{x}^{p}(\eta ,t)\bigg{]}\leq
(\GeorgyMarkup{3M^{2}(n_{x}+1)^{2}+3M^{2}n_{x}^{2}2^{p-1})}+A_{x})\mathbb{E}\bigg{[}|\eta _{x,t\wedge
\tau _{n}}|^{p}\bigg{]}+A_{x}+\GeorgyMarkup{3M^{2}n_{x}^{2}2^{p-1}}.
\end{equation*}%
Observe that $A_{x}<\infty $ by Theorem \ref{TAB}. Finally letting 
\begin{align*}
D& :=p(\GeorgyMarkup{w_{x}+2^{p-1}u_{x}+A_{x}})+\frac{p(p-2)}{2}%
(\GeorgyMarkup{3M^{2}(n_{x}+1)^{2}+3M^{2}n_{x}^{2}2^{p-1}+A_{x}}), \\
K& :=pT(\GeorgyMarkup{A_{x}+2^{p-1}u_{x}})+\frac{p(p-2)}{2}T(\GeorgyMarkup{A_{x}+3M^{2}n_{x}^{2}2^{p-1})},
\end{align*}%
we see that for all $t\in \lbrack 0,\infty )$ we have 
\begin{equation}
\mathbb{E}\bigg{[}|\eta _{x,t\wedge \tau _{n}}|^{p}\bigg{]}\leq D\int_{0}^{t}%
\mathbb{E}\bigg{[}|\eta _{x,s\wedge \tau _{n}}|^{p}\bigg{]}ds+K.  \label{BB4}
\end{equation}%
Observe that constants $K$ and $D$ are independent of the stopping time $%
\tau _{n}$.

The rest of the proof is standard and can be completed along the lines of 
\cite[Theorem 3.1]{ABW}. We give its sketch for the convenience of the reader.
Using Gronwall's inequality together with the inequality (\ref{BB4}) above
we see that for all $t\in \lbrack 0,T]$ we have 
\begin{equation*}
\mathbb{E}\bigg{[}|\eta _{x,t\wedge \tau _{n}}|^{p}\bigg{]}\leq Ke^{Dt}.
\end{equation*}%
It follows from the definition of stopping time $\tau _{n}$ that 
\begin{equation*}
\mathbb{E}\bigg{[}|\eta _{x,t\wedge \tau _{n}}|^{p}\bigg{]}\geq n^{p}\mathbb{%
P}(\tau _{n}<t),\ 
\end{equation*}%
so that, for all $t\in \lbrack 0,T]$,%
\begin{equation*}
\mathbb{P}(\tau _{n}<t)\leq \frac{1}{n^{p}}Ke^{Dt}\rightarrow 0,\
n\rightarrow \infty .
\end{equation*}%
Now convergence in probability and the fact that $\{\tau _{n}\}_{n\in 
\mathbb{N}}$ is an increasing sequence imply that almost surely $%
\lim_{n\rightarrow \infty }\tau _{n}=T$, hence the proof is complete.
\end{proof}

\subsection{Proof of Existence and Uniqueness \label{sec-eu}}

In this section, we are going to prove Theorem \ref{Existence}. We will show
that, for any$\ \beta >\alpha ,$ the process%
\begin{equation}
\overbrace{\ \Xi 
\coloneqq%
\lim_{n\rightarrow \infty }\Xi ^{n}\ }^{\text{in}\ \mathcal{R}_{\beta }^{p}}
\label{convY}
\end{equation}%
solves system (\ref{MainSystem}). For this, we will use auxiliary processes $%
\eta _{x}$ constructed in Theorem \ref{globaltheorem}.

\begin{proof}[Proof of the existence]
According to Theorem \ref{globaltheorem}, for each $x\in \gamma $ equation 
\begin{color}{black}
\begin{equation*}
\eta _{x,t}=\zeta _{x}+\int_{0}^{t}\Phi _{x}(\eta _{x,s},\GeorgyMarkup{\Xi
_{s}^{(x)})}ds+\int_{0}^{t}\Psi _{x}(\eta _{x,s},\GeorgyMarkup{\Xi
_{s}^{(x)})}dW_{x,s},  \, \text{for all} \; t\in \mathcal{T},\, \mathbb{P}-a.s.
\end{equation*}%
 where \GeorgyMarkup{$\Xi^{(x)}_{s}$ is as in (\ref{Xiwithnox})}, admits a unique solution $\eta _{x,t}$. Thus it is sufficient to prove that this solution
is indistinguishable from the process $\xi _{x}$. The convergence (\ref{convY}) implies
that, for any fixed $x\in \gamma $,%
\begin{equation}
\lim_{n\rightarrow \infty }\mathbb{E}|\xi _{x,t}^{n}-\xi _{x,t}|^{p}=0,\
t\in \mathcal{T}.  \label{ExEq2}
\end{equation}%
Therefore, taking into account that both processes $\xi_x$ and $\eta_x$ are continuous, \end{color} to conclude this proof it remains to show that, for any $%
\ t\in \mathcal{T}$, 
\begin{equation}
\lim_{n\rightarrow \infty }\mathbb{E}|\xi _{x,t}^{n}-\eta _{x,t}|^{p}=0.
\label{ExEq3}
\end{equation}%
Let us fix $x\in \gamma $ and$\ t\in \mathcal{T}$ and assume without loss of
generality that $x\in \Lambda _{n}\subset \gamma $. Define the following
processes: 
\begin{align*}
\Phi _{x}^{n}(t)\!& :=\Phi _{x}(\xi _{x,t}^{n},\Xi _{t}^{n})-\Phi _{x}(\eta
_{x,t},\GeorgyMarkup{\Xi _{t}^{(x)}}), \\
\Psi _{x}^{n}(t)\!& :=\Psi _{x}(\xi _{x,t}^{n},\Xi _{t}^{n})-\Psi _{x}(\eta
_{x,t},\GeorgyMarkup{\Xi _{t}^{(x)}}),\  \\
\mathcal{X}_{x,t}^{n}& :=\xi _{x,t}^{n}-\eta _{x,t}.
\end{align*}%
The rest of the proof is rather similar to the proof of Theorem \ref%
{globaltheorem}. The Itô Lemma shows that for all $t\in \mathcal{T}$ we have  $\mathbb{P}$-a.s. 
\begin{multline}
|\mathcal{X}_{x,t}^{n}|^{p}=\int_{0}^{t}p(\mathcal{X}_{x,s}^{n})^{p-1}\Phi
_{x}^{n}(s)ds+ \\
+\int_{0}^{t}\frac{p(p-1)}{2}(\mathcal{X}_{x,s}^{n})^{p-2}(\Psi
_{x}^{n}(s))^{2}ds+ \\
+\int_{0}^{t}p(\mathcal{X}_{x,s}^{n})^{p-1}\Psi _{x}^{n}(s)dW_{x}(s).
\label{ExEq4}
\end{multline}%
Using Lemma \ref{two-bound} in Appendix, we can see that for all $t\in 
\mathcal{T}$ 
\begin{equation*}
(\mathcal{X}_{x,t}^{n})^{p-1}\Phi _{x}^{n}(t) \leq (b+\frac{1}{2}%
\GeorgyMarkup{+4\bar{a}^{2}n_{x}^{2}})\left\vert \mathcal{X}_{x,t}^{n}\right\vert ^{p} 
 +\bar{a}^{2}n_{x}\left\vert \mathcal{X}_{x,t}^{n}\right\vert
^{p-2}\sum_{y\in \gamma _{x}}(\xi _{y,t}^{n}-\xi _{y,t})^{2},
\end{equation*}%
and 
\begin{align*}
(\mathcal{X}_{x,t}^{n})^{p-2}\Psi _{x}^{n}(t)^{2} \leq
2M^{2}\GeorgyMarkup{(n_{x}+1)^{2}}\left\vert \mathcal{X}_{x,t}^{n}\right\vert ^{p} 
 +2M^{2}n_{x}\left\vert \mathcal{X}_{x,t}^{n}\right\vert ^{p-2}\sum_{y\in
\gamma _{x}}(\xi _{y,t}^{n}-\xi _{y,t})^{2}.
\end{align*}%
As \GeorgyMarkup{in} the proof of Theorem \ref{globaltheorem}, we see that for all $t\in 
\mathcal{T}$ 
\begin{equation}
\mathbb{E}\bigg{[}(\mathcal{X}_{x,t}^{n})^{p-1}\Phi _{x}^{n}(t)\bigg{]}\leq
(b+\frac{1}{2}\GeorgyMarkup{+4\bar{a}^{2}n_{x}^{2}} + A_{x}^{n})\mathbb{E}\bigg{[}\left\vert \mathcal{X}%
_{x,t}^{n}\right\vert ^{p}\bigg{]}+A_{x}^{n}  \label{ExEq7}
\end{equation}%
and 
\begin{equation}
\mathbb{E}\bigg{[}(\mathcal{X}_{x,t}^{n})^{p-2}\bigg{(}\Psi _{x}^{n}(t)%
\bigg{)}^{2}\bigg{]}\leq (2M^{2}\GeorgyMarkup{(n_{x}+1)^{2}}+A_{x}^{n})\mathbb{E}\bigg{[}%
\left\vert \mathcal{X}_{x,t}^{n}\right\vert ^{p}\bigg{]}+A_{x}^{n},
\label{ExEq8}
\end{equation}%
where 
\begin{equation*}
A_{x}^{n}:=\max \bigg{\{}\GeorgyMarkup{\bar{a}^{2}n_{x},\
2M^{2}n_{x}}\bigg{\}}\GeorgyMarkup{\mathbb{E}\bigg{[}\sum_{y\in \gamma%
_{x}}(\xi _{y,t}^{n}-\xi _{y,t})^{2}\bigg{]}}.
\end{equation*}%
Now, because $\GeorgyMarkup{\gamma_{x}}$ is finite \GeorgyMarkup{and $p\geq 2$} it is clear from equation (\ref{ExEq2}) that 
\begin{equation*}
\GeorgyMarkup{\mathbb{E}\sum_{y\in \gamma_{x}}(\xi _{y,t}^{n}-\xi _{y,t})^{2}} \rightarrow 0,\ n\rightarrow \infty ,
\end{equation*}%
so \GeorgyMarkup{we see that} $A_{x}^{n}\rightarrow 0$ as $n\rightarrow \infty $. Therefore
using inequality (\ref{ExEq7}) and (\ref{ExEq8}) above we can conclude from
equation (\ref{ExEq4}) that for all $x\in \gamma $ and all $t\in \mathcal{T}$
we have 
\begin{equation*}
\mathbb{E}\bigg{[}|\mathcal{X}_{x,t}^{n}|^{p}\bigg{]}\leq
C_{x}^{n}\int_{0}^{t}\mathbb{E}\bigg{[}|\mathcal{X}_{x,s}^{n}|^{p}\bigg{]}ds+%
\bar{A}_{x}^{n},
\end{equation*}%
where 
\GeorgyMarkup{
\begin{align}
C_{x}^{n}&:=p(b+\frac{1}{2}+4\bar{a}^{2}n_{x}^{2}+A_{x}^{n})+\frac{p(p-1)}{2}
(2M^{2}(n_{x}+1)^{2}+A_{x}^{n}) \nonumber \\ 
\bar{A}_{x}^{n}&:=pTA_{x}^{n}+\frac{p(p-1)}{2}TA_{x}^{n}, \nonumber 
\end{align}
}
and consequently $C_{x}^{n},\bar{A}_{x}^{n}\rightarrow 0$ on $\mathcal{T}$
as $n\rightarrow \infty $. Finally using Gronwall inequality we see that for
all $t\in \mathcal{T}$ we have 
\begin{equation*}
\mathbb{E}\bigg{[}|\mathcal{X}_{x,t}^{n}|^{p}\bigg{]}\leq \bar{A}%
_{x}^{n}e^{(C_{x}^{n})T},
\end{equation*}%
which shows that for all $x\in \gamma $ and uniformly on $\mathcal{T}$ 
\begin{equation*}
\lim_{n\rightarrow \infty }\mathbb{E}\bigg{[}|\mathcal{X}_{x,t}^{n}|^{p}%
\bigg{]}=0.
\end{equation*}%
Equation (\ref{ExEq3}) now follows immediately hence the proof is complete.
\end{proof}

\bigskip

\begin{proof}[Proof of the uniqueness and continuous dependence]
Suppose that $\Xi _{t}^{1}=\left( \xi _{x,t}^{1}\right) _{x\in \gamma }$ and 
$\Xi _{t}^{2}=\left( \xi _{x,t}^{2}\right) _{x\in \gamma } \in \mathcal{R}_{\alpha +}^{p}$, are two  solutions of system (\ref%
{MainSystem}), with initial values \GeorgyMarkup{$\Xi _{0}^{1}, \Xi _{0}^{2}\in L^{p}_{\alpha}$},
respectively. \GeorgyMarkup{Now, for all $t\in\timeint$ and all $x\in\gamma$ letting $\bar{\xi}_{x,t} = \xi^{(1)}_{x,t} - \xi^{(2)}_{x,t}$ we see from} Lemma \ref{two-bound} that%
\begin{equation*}
\mathbb{E}\bigg{[}|\bar{\xi}_{x,t}|^{p}\bigg{]}\leq \GeorgyMarkup{\mathbb{E}}\left\vert \bar{\xi}%
_{x,0}\right\vert ^{p}+Bn_{x}^{2}\sum_{y\in \bar{\gamma}_{x}}\int_{0}^{t}%
\mathbb{E}\bigg{[}|\bar{\xi}_{y,s}|^{p}\bigg{]}ds.
\end{equation*}%
Fix an arbitrary $\beta >\alpha $ and $\alpha _{1}\in \left( \alpha ,\beta
\right) $. An application of Lemma \ref{gron111} to a bounded measurable map $\kappa
:\mathcal{T}\rightarrow l_{\alpha _{1}}^{1}$ defined by the formula 
\begin{equation*}
\kappa _{x}(t)%
\coloneqq%
\mathbb{E}\bigg{[}|\bar{\xi}_{x,t}|^{p}\bigg{]}.
\end{equation*}%
shows that 
\begin{equation*}
\sum_{x\in \gamma }e^{-\beta |x|}\sup_{t\in \mathcal{T}}\kappa _{x}(t)\leq
\GeorgyMarkup{K_{T}}(\alpha _{1},\beta )\GeorgyMarkup{\sum_{x\in \gamma }e^{-\alpha_{1} |x|}|b_{x}|},\ \beta
>\alpha _{1},
\end{equation*}%
where $b_{x}=\GeorgyMarkup{\mathbb{E}}|\bar{\xi}_{x,0}|^{p}$. Therefore we establish that 
\begin{equation*}
||\Xi ^{1}-\Xi ^{2}||_{\mathcal{R}_{\beta }^{p}}^{p}\ \equiv \ \sup_{t\in 
\mathcal{T}}\mathbb{E}\bigg{[}\sum_{x\in \gamma }e^{-\beta |x|}|\bar{\xi}%
_{x,t}|^{p}\bigg{]}\leq \GeorgyMarkup{K_{T}}(\alpha _{1},\beta )\sum_{x\in \gamma }e^{-\GeorgyMarkup{\alpha_{1}}
|x|}\GeorgyMarkup{\mathbb{E}}|\bar{\xi}_{x,0}|^{p},
\end{equation*}%
which implies both statements.
\end{proof}

\subsection{Markov semigroup \label{Markov}}
\AlexMarkup{
In this section we denote by $\Xi _{t}(\bar{\zeta})$ the solution of
equation (\ref{MainSystem}) with initial condition $\bar{\zeta}$. {\rm T}his
process generates an operator family ${\rm {\rm T}}_{t}:C_{b}(l_{\beta }^{p})\rightarrow
C_{b}(l_{\alpha }^{p})$, $\alpha <\beta $, $t\geq 0$, by standard formula%
\begin{equation}
	{\rm T}_{t}f(\bar{\zeta})=\mathbb{E}f(\Xi _{t}(\bar{\zeta})).  \label{sem-def}
\end{equation}%
Consider the space $l_{\alpha +}^{p}=\cap _{\beta >\alpha }l_{\beta }^{p}$
equipped with the projective limit topology, which makes it a Polish space 
see e.g. \cite{GelShi}.

\begin{theorem}
	Operator family ${\rm T}_{t},$ $t\geq 0$, is a strongly continuous Markov
	semigroup in $C_{b}(l_{\alpha +}^{p})$ for any $\alpha \in \mathcal{A}$.
\end{theorem}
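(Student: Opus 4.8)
The plan is to verify, in turn, the three properties that define a strongly continuous Markov semigroup: that each ${\rm T}_t$ maps $C_b(l_{\alpha+}^p)$ into itself (a Feller-type property), the identity ${\rm T}_{s+t}={\rm T}_s{\rm T}_t$ coming from the Markov property of $\Xi$, and strong continuity. The essential inputs are Theorem~\ref{Existence} (existence, uniqueness and, crucially, continuous dependence of the solution on the initial datum) and the a.s.\ path continuity of the components of $\Xi$ from Theorem~\ref{TAB}. First note that for $\bar{\zeta}\in l_{\alpha+}^p$, viewed as a constant $\mathcal{F}_0$-measurable element of $L^p_\beta$ for each $\beta>\alpha$, Theorem~\ref{Existence} produces a solution $\Xi(\bar{\zeta})$ which, by uniqueness, lies in $\bigcap_{\beta>\alpha}\mathcal{R}_{\beta+}^p=\mathcal{R}_{\alpha+}^p$; hence $\Xi_t(\bar{\zeta})\in l_\beta^p$ a.s.\ for each fixed $t$ and every $\beta>\alpha$, and intersecting over $\beta_k\downarrow\alpha$ gives $\Xi_t(\bar{\zeta})\in l_{\alpha+}^p$ a.s., so that ${\rm T}_tf(\bar{\zeta})=\mathbb{E}f(\Xi_t(\bar{\zeta}))$ is well defined and $\|{\rm T}_tf\|_\infty\le\|f\|_\infty$.

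For the Feller property, let $\bar{\zeta}_n\to\bar{\zeta}$ in $l_{\alpha+}^p$, fix $\beta>\alpha$ and pick $\alpha'\in(\alpha,\beta)$. Then $\bar{\zeta}_n\to\bar{\zeta}$ in $L^p_{\alpha'}$, so the continuous-dependence part of Theorem~\ref{Existence} yields $\|\Xi(\bar{\zeta}_n)-\Xi(\bar{\zeta})\|_{\mathcal{R}_\beta^p}\to0$, and in particular $\mathbb{E}\|\Xi_t(\bar{\zeta}_n)-\Xi_t(\bar{\zeta})\|_{l_\beta^p}^p\to0$ for every fixed $t$ and every $\beta>\alpha$. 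Passing to a subsequence and diagonalising over $\beta_k\downarrow\alpha$, we may assume $\Xi_t(\bar{\zeta}_n)\to\Xi_t(\bar{\zeta})$ a.s.\ in $l_{\alpha+}^p$; continuity and boundedness of $f$ with dominated convergence give ${\rm T}_tf(\bar{\zeta}_n)\to {\rm T}_tf(\bar{\zeta})$ along that subsequence, and since the limit is independent of the subsequence the whole sequence converges. Because $l_{\alpha+}^p$ is metrizable (indeed Polish, cf.\ \cite{GelShi}), sequential continuity suffices, so ${\rm T}_tf\in C_b(l_{\alpha+}^p)$.

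The semigroup and Markov properties follow from pathwise uniqueness in the usual manner. For $s\ge0$ the shifted processes $\widetilde W_{x,u}:=W_{x,s+u}-W_{x,s}$, $x\in\gamma$, form a family of independent Wiener processes independent of $\mathcal{F}_s$ and distributed as $(W_{x,\cdot})$, and $(\Xi_{s+u}(\bar{\zeta}))_{u\ge0}$ solves system~(\ref{MainSystem}) driven by $(\widetilde W_{x,\cdot})$ with the $\mathcal{F}_s$-measurable initial value $\Xi_s(\bar{\zeta})\in L^p_\beta$, $\beta>\alpha$. Invoking the uniqueness of Theorem~\ref{Existence} with $\alpha$ replaced by such a $\beta$ (legitimate since $\Xi_{s+\cdot}(\bar{\zeta})\in\mathcal{R}_{\alpha+}^p\subset\mathcal{R}_{\beta+}^p$), this solution is a fixed measurable functional of the initial value and of $(\widetilde W_{x,\cdot})$; the standard conditioning argument then gives $\mathbb{E}\bigl[f(\Xi_{s+t}(\bar{\zeta}))\mid\mathcal{F}_s\bigr]=({\rm T}_tf)(\Xi_s(\bar{\zeta}))$ a.s., and taking expectations, using ${\rm T}_tf\in C_b(l_{\alpha+}^p)$ from the previous step, yields ${\rm T}_{s+t}f(\bar{\zeta})={\rm T}_s({\rm T}_tf)(\bar{\zeta})$.

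For strong continuity it suffices, by the semigroup property, to treat $t\downarrow0$. For fixed $\bar{\zeta}\in l_{\alpha+}^p$ and $\beta>\alpha$ write $\mathbb{E}\|\Xi_t(\bar{\zeta})-\bar{\zeta}\|_{l_\beta^p}^p=\sum_{x\in\gamma}e^{-\beta|x|}\,\mathbb{E}|\xi_{x,t}-\zeta_x|^p$ and split the sum at a finite set $\Lambda_N$. On $\Lambda_N$ each term tends to $0$ as $t\downarrow0$ by the a.s.\ path continuity of $\xi_x$ (so $\xi_{x,t}\to\xi_{x,0}=\zeta_x$) together with $\mathbb{E}\sup_{s\in\timeint}|\xi_{x,s}|^p<\infty$ (Theorem~\ref{TAB}) and dominated convergence; the remaining sum is bounded by $2^{p-1}\sum_{x\notin\Lambda_N}e^{-\beta|x|}\bigl(\sup_{s\in\timeint}\mathbb{E}|\xi_{x,s}|^p+|\zeta_x|^p\bigr)$, which is small uniformly in $t$ for large $N$, since $\sup_{s}\mathbb{E}|\xi_{x,s}|^p\le\sup_{n}\sup_{s}\mathbb{E}|\xi^n_{x,s}|^p$ (Fatou, using the componentwise $L^p$-convergence $\xi^n_{x,s}\to\xi_{x,s}$ from Theorem~\ref{CauchySequenceTheorem}) and the latter is summable against $e^{-\beta|x|}$ by Theorem~\ref{TailTheorem1}, while $\sum_{x}e^{-\beta|x|}|\zeta_x|^p<\infty$ because $\bar{\zeta}\in l_{\alpha+}^p$. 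Hence $\Xi_t(\bar{\zeta})\to\bar{\zeta}$ in $L^p(\Omega;l_\beta^p)$ as $t\downarrow0$ for every $\beta>\alpha$, so ${\rm T}_tf(\bar{\zeta})\to f(\bar{\zeta})$ by the same subsequence-plus-dominated-convergence argument as before; the continuous-dependence estimate of Theorem~\ref{Existence}, being uniform in $t\in\timeint$, supplies the equicontinuity needed to upgrade this to the convergence required for strong continuity (uniform on compact subsets of $l_{\alpha+}^p$), after which continuity of $t\mapsto {\rm T}_tf$ on all of $[0,\infty)$ follows in the standard way from the semigroup property and the contraction bound. The main obstacle is the rigorous justification of the Markov property in the scale setting — specifically, identifying the time-$s$ restart of $\Xi$ as the unique $\mathcal{R}_{\beta+}^p$-solution of the SDE driven by the shifted noise with the $\mathcal{F}_s$-measurable initial value $\Xi_s(\bar{\zeta})$, which is what makes the flow identity and hence ${\rm T}_{s+t}={\rm T}_s{\rm T}_t$ available; the remaining topological points (metrizability, passage to subsequences, equicontinuity on compacta) are routine.
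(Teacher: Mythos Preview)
Your argument is correct and covers the same three ingredients as the paper, but the route you take for strong continuity is genuinely different. The paper does not estimate $\mathbb{E}\|\Xi_t(\bar{\zeta})-\bar{\zeta}\|_{l_\beta^p}^p$ directly; instead it observes that the truncated processes $\Xi_t^n(\bar{\zeta})$ generate strongly continuous semigroups ${\rm T}_t^n$ in $C_b(l_\alpha^p)$ by the classical finite-dimensional theory, and then uses the convergence $\Xi^n(\bar{\zeta})\to\Xi(\bar{\zeta})$ in $\mathcal{R}_\beta^p$ (which is uniform in $t\in\mathcal{T}$ by construction) to transfer strong continuity from ${\rm T}_t^n$ to ${\rm T}_t$. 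Your approach is more self-contained --- it avoids invoking the semigroup theory for the truncated system and instead combines the a.s.\ path continuity of $\xi_x$ (Theorem~\ref{TAB}) with a tail bound coming from Theorem~\ref{TailTheorem1} via Fatou. The paper's approximation argument is shorter and exploits machinery already built in Section~\ref{sec-Truncsys}, while your argument is more explicit about what is actually being controlled and does not lean on properties of the finite-volume semigroups beyond what is strictly needed. For the Feller and semigroup properties both proofs run on the same inputs (continuous dependence and uniqueness from Theorem~\ref{Existence}); you spell out the Markov-property conditioning with shifted Wiener processes, whereas the paper simply records that uniqueness gives ${\rm T}_t{\rm T}_s={\rm T}_{t+s}$ ``in the standard way''.
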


\begin{proof}
	Continuity of the map $L_{\alpha }^{p}\ni \bar{\zeta}\mapsto \Xi (\bar{\zeta}%
	)\in \mathcal{R}_{\beta }^{p}$, $\alpha <\beta $, \begin{color}{black} for an arbitrary  $T>0$ \end{color} (cf. Theorem %
	\ref{Existence}), implies that operators ${\rm T}_{t}:C_{b}(l_{\beta
	}^{p})\rightarrow C_{b}(l_{\alpha }^{p})$, \begin{color}{black}  $t\ge 0$, \end{color}  are bounded, which in turn implies
	their boundedness as operators in $C_{b}(l_{\alpha +}^{p}),$ for any $\alpha
	\in \mathcal{A}$. {\rm T}he uniqueness of the solution (cf. Theorem \ref{Existence}%
	) implies in the standard way the evolution property
	\[
	{\rm T}_{t}{\rm T}_{s}={\rm T}_{t+s},\ t,s\geq 0.
	\]%
	Observe that the truncated process $\Xi _{t}^{n}(\bar{\zeta})$ generates the
	strongly continuous semigroup ${\rm T}_{t}^{n}:C_{b}(l_{\alpha }^{p})\rightarrow
	C_{b}(l_{\alpha }^{p})$, for any $\alpha \in \mathcal{A}$. It follows from
	the convergence%
	\[
	\Xi _{t}^{n}(\bar{\zeta})\rightarrow \Xi _{t}(\bar{\zeta}),\ n\rightarrow
	\infty ,
	\]%
	in $\mathcal{R}_{\beta }^{p}$ for any $\beta >\alpha $ that%
	\[
	\sup_{t\in \mathcal{T}}\left\Vert {\rm T}_{t}^{n}f(\bar{\zeta})-{\rm T}_{t}f(\bar{\zeta}%
	)\right\Vert _{C_{b}(l_{\alpha +}^{p})},\ n\rightarrow \infty ,\text{ for
		any }f\in C_{b}(l_{\alpha +}^{p})\text{ and }\bar{\zeta}\in l_{\alpha +}^{p},
	\]%
	which in turn implies that ${\rm T}_{t}:C_{b}(l_{\alpha +}^{p})\rightarrow
	C_{b}(l_{\alpha +}^{p})$ is strongly continuous. 
\end{proof}

\begin{remark}
	The dominated convergence theorem implies that 
	\begin{equation}
		\int {\rm T}_{t}^{n}f(\bar{\zeta})\nu (d\bar{\zeta})\rightarrow \int {\rm T}_tf(\bar{\zeta}%
		)\nu (d\bar{\zeta}),\ n\rightarrow \infty ,  \label{lim-sem}
	\end{equation}%
	for any probability measure $\nu $ on $l_{\alpha +}^{p}$.
\end{remark}}

\section{Stochastic dynamics associated with Gibbs measures \label{sdyn}}

As an application of our results, we will present a construction of
stochastic dynamics associated with Gibbs measures on $S^{\gamma }$. 
\begin{color}{black}%
Sufficient%
\end{color}
conditions of the existence of these measures were derived in \cite{DKKP}.
For the convenience of the reader, we start with a reminder of the general
definition of Gibbs measures, adopted to our framework.

\subsection{Construction of Gibbs measures \label{gibbs0}}

In the standard Dobrushin-Lanford-Ruelle\ (DLR) approach in statistical
mechanics \cite{Geor,Pre}, Gibbs measures (states) are constructed by means
of their local conditional distributions (constituting the so-called
Gibbsian specification). We are interested in Gibbs measures describing
equilibrium states of a (quenched) system of particles with positions $%
\gamma \subset X=\mathbb{R}^{d}$ and spin space $S=\mathbb{R}$, defined by
pair and single-particle potentials $W_{xy}$ and $V$, respectively. We
assume the following:

\begin{itemize}
\item $W_{xy}:S\times S\rightarrow \mathbb{R}$, $x,y\in X$, are measurable
functions satisfying the polynomial growth estimate 
\begin{equation}
\left\vert W_{xy}(u,v)\right\vert \leq I_{W}\left( \left\vert u\right\vert
^{r}+\left\vert v\right\vert ^{r}\right) +J_{W},\ \ \ u,v\in S,
\label{W-est}
\end{equation}%
and the finite range condition $W_{xy}\equiv 0$ if $\left\vert
x-y\right\vert \leq \rho $ for all $x,y\in X$ and some constants $%
I_{W},J_{W},R,r\geq 0$. We assume also that $W_{xy}(u,v)$ is symmetric with
respect to the permutation of $(x,u)$ and $(y,v)$.

\item the single-particle potential $V$ satisfies the bound 
\begin{equation}
V(u)\geq a_{V}\left\vert u\right\vert ^{\tau }-b_{V},\ \ u\in S,
\label{V-est}
\end{equation}%
for some constants $a_{V},b_{V}>0$, and $\tau >r$.
\end{itemize}

\begin{example}
\label{ex-gibbs}A typical example is given by the pair interaction in of the
form 
\begin{equation*}
W_{xy}(u,v)=a(x-y)u~v,\ \text{\ }u,v\in S,
\end{equation*}%
where $a:X\rightarrow \mathbb{R}$ is as in Section \ref{sec-stochsystem}. In
this case, $r=2$ and so we need $\tau >2$ in (\ref{V-est}). The method of 
\cite{DKKP} does not allow us to control the case of $\tau =2,$ even when
the underlying particle configuration $\gamma $ is a typical realisation of
a homogeneous Poisson random field on $\Gamma (X)$.
\end{example}

Let $\mathcal{F}(\gamma )$ be the collection of all finite subsets of $%
\gamma \in \Gamma (X)$. For any $\eta \in \mathcal{F}(\gamma ),$ $\bar{\sigma%
}_{\eta }=(\sigma _{x})_{x\in \eta }\in S^{\eta }$ and $\bar{z}_{\gamma
}=(z_{x})_{x\in \gamma }\in S^{\gamma }$ define the relative local
interaction energy%
\begin{equation*}
E_{\eta }(\bar{\sigma}_{\eta }\left\vert \bar{z}_{\gamma }\right.
)=\sum_{\{x,y\}\subset \eta }W_{xy}(\sigma _{x},\sigma _{y})+\sum_{\substack{
x\in \eta  \\ y\in \gamma \setminus \eta }}W_{xy}(\sigma _{x},z_{y}).
\end{equation*}%
The corresponding specification kernel $\Pi _{\eta }(d\bar{\sigma}_{\gamma
}\left\vert \bar{z}_{\gamma }\right. )$ is a probability measure on $%
S^{\gamma }$ of the form%
\begin{equation}
\Pi _{\eta }(d\bar{\sigma}_{\gamma }|\bar{z}_{\gamma })=~\mu _{\eta }(d\bar{%
\sigma}_{\eta }|\bar{z}_{\gamma })\otimes \delta _{\bar{z}_{\gamma \setminus
\eta }}(d\bar{\sigma}_{\gamma \setminus \eta }),  \label{spec-kernel}
\end{equation}%
where%
\begin{equation}
\mu _{\eta }(d\bar{\sigma}_{\eta }|\bar{z}_{\gamma }):=Z(\bar{z}_{\gamma
\setminus \eta })^{-1}\mathrm{exp}~\left[ -E_{\eta }(\bar{\sigma}_{\eta
}\left\vert \bar{z}_{\gamma }\right. )\right] \bigotimes\limits_{x\in \eta
}e^{-V(\sigma _{x})}d\sigma _{x}  \label{mu-kernel}
\end{equation}%
is a probability measure on $S^{\eta }$. Here $Z(\bar{z}_{\eta })$ is the
normalizing factor and $\delta _{\bar{z}_{\gamma \setminus \eta }}(d\bar{%
\sigma}_{\gamma \setminus \eta })$ is the Dirac measure on $S^{\gamma
\setminus \eta }$ concentrated on $\bar{z}_{\gamma \setminus \eta }$. The
family\newline
$\left\{ \Pi _{\eta }(d\bar{\sigma}|\bar{z}),\ \eta \in \mathcal{F}(\gamma ),%
\bar{z}\in S^{\gamma }\right\} $ is called the Gibbsian specification (see
e.g. \cite{Geor,Pre}).

A probability measure $\nu $ on $S^{\gamma }$ is said to be a Gibbs measure
associated with the potentials $W$ and $V$ if it satisfies the DLR equation%
\begin{equation}
\nu (B)=\int_{S^{\gamma }}\Pi _{\eta }(B|\bar{z})\nu (d\bar{z}),\quad B\in 
\mathcal{B}(S^{\gamma }),  \label{DLR}
\end{equation}%
for all $\eta \in \mathcal{F}(\gamma )$. For a given $\gamma \in \Gamma (X)$%
, by $\mathcal{G}(S^{\gamma })$ we denote the set of all such measures.

By $\mathcal{G}_{\alpha ,p}(S^{\gamma })\subset \mathcal{G}(S^{\gamma })$ we
denote the set of all Gibbs measures on $S^{\gamma }$ associated with $W$
and $V$, which are supported on $l_{\alpha }^{p}$.

\begin{theorem}
\label{gibbs1}Assume that conditions (\ref{W-est}) and (\ref{V-est}) are
satisfied and $p\in \left[ r,\tau \right] $. Then the set $\mathcal{G}%
_{\alpha ,p}(S^{\gamma })$ is non-empty for any $\alpha \in \mathcal{A}$.
\end{theorem}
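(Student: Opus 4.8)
The plan is to construct a Gibbs measure as a weak limit of finite-volume Gibbs measures and then to verify, via a uniform moment bound, that the limit is supported on $l_\alpha^p$. First I would fix an increasing sequence $\{\Lambda_n\}_{n\in\mathbb{N}}\subset\mathcal{F}(\gamma)$ with $\Lambda_n\uparrow\gamma$ and a fixed boundary condition, e.g. $\bar z\equiv 0$, and consider the finite-volume measures $\mu_{\Lambda_n}(d\bar\sigma_{\Lambda_n}\,|\,\bar 0)$ given by (\ref{mu-kernel}), extended to $S^\gamma$ by $\delta_0$ off $\Lambda_n$. The lower bound (\ref{V-est}) on $V$ together with the polynomial growth (\ref{W-est}) on $W$ is exactly what is needed to dominate the interaction energy by the single-site confinement: since $\tau>r$ and, because each $x$ has finitely many neighbours $\bar\gamma_x$ with $n_x$ controlled by (\ref{logbound}), one gets for each $x\in\Lambda_n$ a bound of the form
\begin{equation*}
\mathrm{exp}\!\left[-E_{\Lambda_n}(\bar\sigma_{\Lambda_n}\,|\,\bar 0)\right]\prod_{x\in\Lambda_n}e^{-V(\sigma_x)}\ \leq\ \prod_{x\in\Lambda_n}\mathrm{exp}\!\left[-\tfrac12 a_V|\sigma_x|^\tau + K(1+n_x)\right]
\end{equation*}
after a Young-type splitting $I_W|\sigma_x|^r\le \tfrac14 a_V|\sigma_x|^\tau + C$. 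This yields, uniformly in $n$, the key a priori moment estimate
\begin{equation*}
\sup_{n\in\mathbb{N}}\ \int_{S^\gamma} |\sigma_x|^p\,\mu_{\Lambda_n}(d\bar\sigma\,|\,\bar 0)\ \leq\ \Theta(1+n_x)^{p/\tau}\ \leq\ \Theta'(1+\log(1+|x|))^{p/\tau}
\end{equation*}
for some constants, valid for any $p\le\tau$ (here one uses $p\le\tau$ to bound the $p$-th moment of a measure with a $|\sigma|^\tau$-confining density).

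Next I would establish tightness of $\{\mu_{\Lambda_n}\}$ on $S^\gamma$ (with its product topology) and extract a weakly convergent subsequence with limit $\nu$; this is standard since tightness of each one-dimensional marginal $\{(\pi_x)_*\mu_{\Lambda_n}\}_n$ follows from the uniform moment bound above and Prokhorov. One then checks that $\nu$ satisfies the DLR equation (\ref{DLR}): for fixed $\eta\in\mathcal{F}(\gamma)$ and $n$ large enough that $\eta\subset\Lambda_n$, the finite-volume measures satisfy the consistency $\mu_{\Lambda_n}=\int \Pi_\eta(\cdot\,|\,\bar z)\,\mu_{\Lambda_n}(d\bar z)$; passing to the limit requires the continuity (and suitable uniform integrability) of $\bar z\mapsto\Pi_\eta(B\,|\,\bar z)$, which again rests on (\ref{W-est}) together with the uniform moments to control the boundary sum $\sum_{x\in\eta,\,y\in\gamma\setminus\eta}W_{xy}$. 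Hence $\nu\in\mathcal{G}(S^\gamma)$.

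Finally I would show $\nu\in\mathcal{G}_{\alpha,p}(S^\gamma)$, i.e.\ $\nu(l_\alpha^p)=1$. By Fatou applied along the weakly convergent subsequence and the uniform bound,
\begin{equation*}
\int_{S^\gamma}\sum_{x\in\gamma}e^{-\alpha|x|}|\sigma_x|^p\,\nu(d\bar\sigma)\ \leq\ \liminf_{n\to\infty}\sum_{x\in\gamma}e^{-\alpha|x|}\!\int|\sigma_x|^p\,\mu_{\Lambda_n}(d\bar\sigma\,|\,\bar 0)\ \leq\ \Theta'\sum_{x\in\gamma}e^{-\alpha|x|}(1+\log(1+|x|))^{p/\tau},
\end{equation*}
and the last series converges because $\gamma\in\Gamma(X)$ is locally finite (so the number of points in any ball of radius $\rho$ around the origin grows at most polynomially in the radius) and the weight $e^{-\alpha|x|}$ decays exponentially. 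Thus $\mathbb{E}_\nu\|\bar\sigma\|_{l_\alpha^p}^p<\infty$, so $\nu$ is supported on $l_\alpha^p$ and $\mathcal{G}_{\alpha,p}(S^\gamma)\neq\varnothing$.

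\emph{The main obstacle} is the limit transition in the DLR equation: one must upgrade weak convergence on $S^\gamma$ (product topology) to convergence of the integrals $\int\Pi_\eta(B\,|\,\bar z)\,\mu_{\Lambda_n}(d\bar z)$, and since $\Pi_\eta(B\,|\,\bar z)$ depends on the boundary spins $\bar z_{\gamma\setminus\eta}$ through the unbounded energy $E_\eta$, one needs the uniform moment estimates to justify uniform integrability there — this is precisely the step where the growth control (\ref{logbound}) on $n_x$ and the gap $\tau>r$ are essential. I expect this to be handled as in \cite{DKKP}, to which the proof can largely defer.
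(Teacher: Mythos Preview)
Your proposal is essentially correct and outlines the standard finite-volume/tightness/DLR-limit construction; it is in fact a sketch of what the references \cite{KKP,DKKP} do. The paper's own proof takes a much shorter route: it simply observes that condition (\ref{logbound}) implies finiteness of the weighted sum
\[
a_{\gamma,\rho}(\gamma)=\sum_{x\in\gamma}e^{-\alpha|x|}n_x^{p_1}\sum_{y\in\gamma_x}n_y^{p_1}
\]
for all $p_1\in\mathbb{N}$, and then invokes the existence results of \cite{KKP,DKKP}, for which this is the required hypothesis. So where you reconstruct the argument, the paper just checks the black-box condition and cites.

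Two minor imprecisions in your sketch are worth flagging. First, after the Young-type splitting the $n_x$-dependence is generally worse than linear: since the term $I_W|\sigma_x|^r$ is summed over the $n_x$ neighbours, one gets $n_x I_W|\sigma_x|^r\le \tfrac14 a_V|\sigma_x|^\tau + C\,n_x^{\tau/(\tau-r)}$, so the single-site moment bound should carry a power $n_x^{\tau/(\tau-r)}$ rather than $(1+n_x)$. This is harmless for the conclusion because (\ref{logbound}) makes any power of $n_x$ polylogarithmic in $|x|$, hence still summable against $e^{-\alpha|x|}$. Second, your claim that ``the number of points in any ball grows at most polynomially'' is not a general property of locally finite configurations; it does however follow from (\ref{logbound}) via a covering argument (cover $B_R$ by $O(R^d)$ balls of radius $\rho$, each containing at most $C(1+\log(1+R))$ points), so the series indeed converges. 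With these adjustments your argument goes through, and you correctly identify the DLR limit transition as the place where the work of \cite{DKKP} is genuinely needed.
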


\begin{proof}
It follows in a straightforward manner from condition (\ref{logbound}) that 
\begin{equation*}
a_{\gamma ,\rho }(\gamma )=\sum_{x\in \gamma }e^{-\alpha \left\vert
x\right\vert }n_{x}^{p_{1}}\sum_{y\in \gamma _{x}}n_{y}^{p_{1}}<\infty
\end{equation*}%
for any $p_{1},p_{2}\in \mathbb{N}$, which is sufficient for the existence
of $\nu \in \mathcal{G}_{\alpha ,p}(S^{\gamma })$ for any $p\in \left[
r,\tau \right] $, see \cite{KKP} and \cite{DKKP}.
\end{proof}

\begin{remark}
The result of \cite{KKP}, \cite{DKKP} is more refined and states in addition
certain bounds on exponential moments of $v\in \mathcal{G}_{\alpha
,p}(S^{\gamma })$.
\end{remark}

\begin{remark}
\begin{color}{black}%
Conditions of the uniqueness of $v\in \mathcal{G}_{\alpha ,p}(S^{\gamma })$
are known only in the case of configuration $\gamma $ with bounded sequence $%
\left\{ n_{x},\ x\in \gamma \right\} $. Sufficient conditions of
non-uniqueness (phase transition) for Poisson-distributed $\gamma $ are given
in \cite{DKKP}. 
\end{color}%
\end{remark}

\subsection{Construction of the stochastic dynamics \label{sec-stoch-dyn}}

In this section, we will construct a process $\Xi _{t}$ with invariant
measure $\nu \in \mathcal{G}_{\alpha _{\ast },p}(S^{\gamma })$ defined by
interaction potentials $W$ and $V$ as in Example \ref{ex-gibbs}. By Theorem %
\ref{gibbs1}, the set $\mathcal{G}_{\alpha _{\ast },p}(S^{\gamma })$ is not
empty if $p\in \left[ 2,\tau \right] $. Then, according to the general
paradigm, $\Xi _{t}$ will be a solution of the system (\ref{MainSystem})
with the coefficients satisfying the following:

\begin{enumerate}
\item[(1)] the drift coefficient has a gradient form, that
is, $\phi =-\nabla V$ and $\varphi_{x,y}(\sigma_x,\sigma_y)=\nabla_{\sigma_x}W_{x,y}(\sigma_x,\sigma_y)$; moreover, $\phi$  satisfies Conditions (\ref{bound1}) and (\ref%
{cond-diss1}), a typical example is given by 
\begin{equation*}
\phi (\sigma )=-\sigma ^{2n+1}\text{ for any}\ n=1,2,..,
\end{equation*}%
in which case $R=2n+1$ and $\tau =2n+2$, cf. (\ref{bound1});

\item[(2)] for each $x\in \gamma $, the noise is additive, that is, $\Psi
_{x}=id$.
\end{enumerate}

Thus the system (\ref{MainSystem}) obtains the form
\begin{equation*}
d\xi _{x,t}=\GeorgyMarkup{\frac{1}{2}}\left[ \nabla V(\xi _{x,t})+\sum_{y\in \bar{\gamma}%
_{x}}a(x-y)\xi _{y,t}\right] dt+dW_{x,t},\ x\in \gamma .
\end{equation*}%
According to Theorem \ref{Existence}, this system admits a unique strong
solution $\Xi \in \mathcal{R}_{\alpha +}$ for any initial condition $\bar{%
\sigma}_{\gamma }\in l_{\alpha }^{p}$ with arbitrary $\alpha \in \mathcal{A}$
and $p\geq R$.

A standard way of rigorously proving the invariance of $\nu $ would require
dealing with Markov processes and semigroups in nuclear spaces. This
difficulty can be avoided by using the limit transition (\ref{lim-sem}).

\begin{color}{black}%

\begin{theorem}
Assume that $p\in \left[ \max \left\{ 2,R\right\} ,\tau \right] $ and let $%
{\rm T}_{t}$ be the semigroup defined by the process $\Xi _{t}$, cf. (\ref{sem-def}%
). Then any $\nu \in \mathcal{G}_{\alpha _{\ast },p}(S^{\gamma })$ is a
reversible (symmetrizing) measure for ${\rm T}_{t}$, that is, 
\begin{equation*}
\int {\rm T}_{t}f(\bar{\sigma}_{\gamma })g(\bar{\zeta})\nu (d\bar{\sigma}_{\gamma
})=\int f(\bar{\sigma}_{\gamma }){\rm T}_{t}g(\bar{\zeta})\nu (d\bar{\sigma}%
_{\gamma })
\end{equation*}%
for all $f,g\in C_{b}(l_{\alpha _{\ast }+}^{p})$.
\end{theorem}

\begin{proof}
First observe that condition $p\in \left[ \max \left\{ 2,R\right\} ,\tau %
\right] $ ensures that $\mathcal{G}_{\alpha _{\ast },p}(S^{\gamma })\neq
\emptyset $ and semigroup ${\rm T}_{t}$ is well-defined.

Consider the solution $\Xi ^{n}=(\xi _{x}^{n})_{x\in \gamma }$ of the
truncated system (\ref{FinVolSystem}). Its non-trivial part $(\xi
_{x}^{n})_{x\in \Lambda _{n}}$ is a Markov process in $S^{\Lambda _{n}}$. We
denote by ${\rm T}_{t}^{\Lambda _{n}}$ the corresponding Markov semigroup in $%
C_{b}(S^{\Lambda _{n}})$ and observe that, for $f\in C_{b}(S^{\gamma })$ and 
$f_{\bar{z}_{\gamma }}(\bar{\sigma}_{\Lambda _{n}}):=f(\sigma _{\Lambda
_{n}}\times \bar{z}_{\gamma \setminus \Lambda _{n}})$, we have 
\begin{equation*}
{\rm T}_{t}^{n}f(\bar{\sigma}_{\Lambda _{n}}\times \bar{z}_{\gamma \setminus
\Lambda _{n}})={\rm T}_{t}^{\Lambda _{n}}f_{\bar{z}_{\gamma }}(\bar{\sigma}%
_{\Lambda _{n}}).
\end{equation*}%
By standard theory of finite dimensional SDEs, $\mu _{\Lambda _{n}}(d\bar{%
\sigma}_{\Lambda _{n}}|\bar{z}_{\Lambda _{n}})$ given by (\ref{mu-kernel})
is a reversible (symmetrizing) measure for the semigroup ${\rm T}_{t}^{\Lambda
_{n}}$. Thus we have%
\begin{multline*}
\int {\rm T}_{t}^{n}f(\bar{\sigma}_{\Lambda _{n}}\times \bar{z}_{\gamma \setminus
\Lambda _{n}})g(\bar{\sigma}_{\Lambda _{n}}\times \bar{z}_{\gamma \setminus
\Lambda _{n}})\mu _{\Lambda _{n}}(d\bar{\sigma}_{\Lambda _{n}}|\bar{z}%
_{\gamma }) \\
=\int f(\bar{\sigma}_{\Lambda _{n}}\times \bar{z}_{\gamma \setminus \Lambda
_{n}}){\rm T}_{t}^{n}g(\bar{\sigma}_{\Lambda _{n}}\times \bar{z}_{\gamma \setminus
\Lambda _{n}})\mu _{\Lambda _{n}}(d\bar{\sigma}_{\Lambda _{n}}|\bar{z}%
_{\gamma })
\end{multline*}%
for any $\bar{z}_{\gamma }$. The latter implies in turn that%
\begin{multline*}
\int {\rm T}_{t}^{n}f(\bar{\sigma}_{\gamma })g(\bar{\sigma}_{\gamma })\Pi
_{\Lambda _{n}}(d\bar{\sigma}_{\gamma }|\bar{z}_{\gamma })=\int {\rm T}_{t}^{n}f(%
\bar{\sigma}_{\gamma })g(\bar{\sigma}_{\gamma })~\mu _{\Lambda _{n}}(d\bar{%
\sigma}_{\Lambda _{n}}|\bar{z}_{\gamma })\otimes \delta _{\bar{z}_{\gamma
\setminus \Lambda _{n}}}(d\bar{\sigma}_{\bar{\eta}}) \\
=\int {\rm T}_{t}^{n}f(\sigma _{\Lambda _{n}}\times \bar{z}_{\gamma \setminus
\Lambda _{n}})g(\sigma _{\Lambda _{n}}\times \bar{z}_{\gamma \setminus
\Lambda _{n}})\mu _{\Lambda _{n}}(d\bar{\sigma}_{\Lambda _{n}}|\bar{z}%
_{\gamma }) \\
=\int f(\bar{\sigma}_{\gamma }){\rm T}_{t}^{n}g(\bar{\sigma}_{\gamma })\Pi
_{\Lambda _{n}}(d\bar{\sigma}_{\gamma }|\bar{z}_{\gamma }),
\end{multline*}%
where $\Pi _{\Lambda _{n}}(d\bar{\sigma}_{\gamma }|\bar{z}_{\gamma })=\mu
_{\Lambda _{n}}(d\bar{\sigma}_{\Lambda _{n}}|\bar{z}_{\gamma })\otimes
\delta _{\bar{z}_{\gamma \setminus \Lambda _{n}}}(d\bar{\sigma}_{\bar{\eta}})
$ is the specification kernel, cf. (\ref{spec-kernel}). Integrating with
respect to $\nu (d\bar{z}_{\gamma })$ and applying the DLR equation (\ref%
{DLR}) we see that%
\begin{equation*}
\int {\rm T}_{t}^{n}f(\bar{\sigma}_{\gamma })g(\bar{\sigma}_{\gamma })\nu (d\bar{%
\sigma}_{\gamma })=\int f(\bar{\sigma}_{\gamma }){\rm T}_{t}^{n}g(\bar{\sigma}%
_{\gamma })\nu (d\bar{\sigma}_{\gamma }).
\end{equation*}%
Passing to the limit as $n\rightarrow \infty $ (cf. (\ref{lim-sem})) we
obtain the equality 
\begin{equation*}
\int {\rm T}_{t}f(\bar{\sigma}_{\gamma })g(\bar{\sigma}_{\gamma })\nu (d\bar{\sigma%
}_{\gamma })=\int f(\bar{\sigma}_{\gamma }){\rm T}_{t}g(\bar{\sigma}_{\gamma })\nu
(d\bar{\sigma}_{\gamma }).
\end{equation*}%
as required.
\end{proof}

\end{color}%

\begin{remark}
\begin{color}{black}%
An alternative way to construct stochastic dynamics associated with $\nu
\in \mathcal{G}_{\alpha _{\ast },p}(S^{\gamma })$ is via the theory of
Dirichlet forms. Indeed, $\nu $ satisfies an integration-by-parts formula
and thus defines a classical Dirichlet form, which is a closed bilinear form
in $L^{2}(S^{\gamma },\nu )$. The generator of this form is a non-negative
self-adjoint operator in $L^{2}(S^{\gamma },\nu )$ and thus defines a
strongly continuous semigroup in $L^{2}(S^{\gamma },\nu )$, which, in turn,
defines a Markov process in $S^{\gamma }$ with invariant measure $\nu $
(so-called Hunt process), see \cite{AR} for details. The SDE approach that
we use in our work is, however, more explicit and gives in general better
control on properties of the stochastic dynamics. 
\end{color}%
\end{remark}
\begin{remark}
\AlexMarkup{Observe that pairs $(\gamma ,(\sigma _{x})_{x\in \gamma })$ form the marked
configuration space $\Gamma (X,S)$. For the mathematical formalism of these
spaces and discussion of the existence and uniqueness of Gibbs measures and
phase transitions, see \cite{CDKP}, \cite{DKK} and \cite{RoZass1}, \cite{RoZass2} and
references therein. The paper \cite{RoZass1} considers in particular the case of
marks with values in a path space, which gives a complementary way of
defining and studying infinite dimensional interacting diffusions indexed by
elements of $\gamma \in \Gamma (X)$.}
\end{remark}

\renewcommand\theequation{\thesection.\arabic{equation}}
\appendix

\section{Technical Details \label{sec-auxresults}}
\subsection{Linear operators in the spaces of sequences}
We start with the formulation of a general result from \cite{DaF} on the
existence of (infinite-time) solutions for a special class of linear
differential equations, which extends the so-called Ovsjannikov method, see
e.g. \cite{Deim}.

\begin{definition}
\label{Defovsop} Let $\mathfrak{B}\mathbf{=}\{B_{\alpha }\}_{\alpha \in 
\mathcal{A}}$ be a scale of Banach spaces. A liner operator $%
A:\bigcup\limits_{\alpha \in \mathcal{A}}B_{\alpha }\rightarrow
\bigcup\limits_{\alpha \in \mathcal{A}}B_{\alpha }$ is called an Ovsjannikov
operator of order $q>0$ if $A(B_{\alpha })\subset B_{\beta }$ and there
exists a constant $L>0$ such that 
\begin{equation}
||Ax||_{B_{\beta }}\leq \frac{L}{(\beta -\alpha )^{q}}||x||_{B_{\alpha }},\
x\in B_{\alpha },  \label{Ovs-est}
\end{equation}%
for all $\alpha <\beta \in \mathcal{A}$. The space of such operators will be
denoted by $\mathcal{O}(\mathfrak{B},q)$.
\end{definition}

\begin{theorem}
\label{Th-ovs} \cite[Theorem 3.1 and Remark 3.3]{DaF} Let $A\in \mathcal{O}(%
\mathfrak{B},q)$ with $q<1$. Then, for any $\alpha ,\beta \in \mathcal{A}$
such that $\alpha <\beta $ and $f_{0}\in B_{\alpha }$, there exists a unique
continuous function $f:[0,\infty )\rightarrow B_{\beta }$ with $f(0)=f_{0}$
such that:

\begin{enumerate}
\item[(1)] $f$ is continuously differentiable on $(0,\infty )$;

\item[(2)] $Af(t)\in B_{\beta }$ for all $t\in (0,\infty )$;

\item[(3)] $f$ solves the differential equation 
\begin{equation*}
\dfrac{d}{dt}f(t)=Af(t),\quad t>0.
\end{equation*}
\end{enumerate}

Moreover, 
\begin{equation}\label{OvsEs}
\lVert f(t)\rVert _{B_{\beta }}\leq \GeorgyMarkup{K_{t}}(\alpha ,\beta )\lVert f_{0}\rVert
_{B_{\alpha }},\quad t>0,
\end{equation}%
where $\GeorgyMarkup{K_{t}}(\alpha ,\beta ):=\sum_{n=0}^{\infty }\frac{L^{n}t^{n}}{(\beta
-\alpha )^{qn}}\frac{n^{qn}}{n!}<\infty $.
\end{theorem}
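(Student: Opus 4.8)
\emph{Strategy.} I would construct the solution as an explicit power series in $A$ and read off every asserted property — the bound (\ref{OvsEs}) included — from termwise estimates of that series; uniqueness then follows by applying the same estimates to the difference of two solutions. Concretely, set $f(t):=\sum_{n=0}^{\infty}\frac{t^{n}}{n!}A^{n}f_{0}$ and first prove the basic estimate: for $\alpha<\beta$ in $\mathcal{A}$ and $n\geq1$, inserting the equally spaced intermediate indices $\alpha_{j}:=\alpha+\frac{j}{n}(\beta-\alpha)$, $j=0,\dots,n$, and applying the Ovsjannikov bound (\ref{Ovs-est}) once across each subinterval gives
\[
\|A^{n}f_{0}\|_{B_{\beta}}\leq\Bigl(\prod_{j=1}^{n}\frac{L}{(\alpha_{j}-\alpha_{j-1})^{q}}\Bigr)\|f_{0}\|_{B_{\alpha}}=\frac{L^{n}n^{qn}}{(\beta-\alpha)^{qn}}\|f_{0}\|_{B_{\alpha}}.
\]
Hence the $n$-th term of the series has $B_{\beta}$-norm at most $\frac{L^{n}t^{n}n^{qn}}{n!(\beta-\alpha)^{qn}}\|f_{0}\|_{B_{\alpha}}$, and summing yields $\|f(t)\|_{B_{\beta}}\leq K_{t}(\alpha,\beta)\|f_{0}\|_{B_{\alpha}}$ — exactly (\ref{OvsEs}) — as soon as $K_{t}(\alpha,\beta)<\infty$.

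\emph{Convergence and the role of $q<1$.} By Stirling's formula $(n!)^{1/n}\sim n/e$, the $n$-th root of the general term of $K_{t}(\alpha,\beta)$ is asymptotic to $\frac{Lte}{(\beta-\alpha)^{q}}n^{q-1}$, which tends to $0$ for every $t\geq0$ precisely because $q<1$; thus $K_{t}(\alpha,\beta)<\infty$ and $t\mapsto f(t)$ is a $B_{\beta}$-valued entire function. (For $q=1$ that same root tends to the positive constant $\frac{Lte}{\beta-\alpha}$, yielding only the classical finite Ovsjannikov lifetime; the step from a finite to an infinite lifetime under $q<1$, established in \cite{DaF}, is what is being used.) Since $\beta>\alpha$ was arbitrary in $\mathcal{A}$, the same series converges in $B_{\gamma}$ for every $\gamma>\alpha$ to the same element, so $f(t)\in B_{\alpha+}$; in particular $Af(t)\in B_{\beta}$, which is property (2). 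Differentiating termwise — the series $\sum_{n\geq1}\frac{t^{n-1}}{(n-1)!}A^{n}f_{0}$ converges uniformly on compact $t$-intervals in $B_{\beta}$ by the same estimate — and passing the bounded operator $A\colon B_{\gamma}\to B_{\beta}$ (any $\gamma\in(\alpha,\beta)$) through the convergent sum gives $f'(t)=Af(t)$; so $f$ is $C^{\infty}$ on $[0,\infty)$ and (1)--(3) hold.

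\emph{Uniqueness.} If $g$ is a second solution with $g(0)=f_{0}$, put $w:=f-g$: then $w(0)=0$, $w$ is continuous into $B_{\beta}$, $Aw(t)\in B_{\beta}$, and $w'=Aw$ by linearity of $A$. Fix $T>0$ and $\bar{\beta}\in(\beta,\alpha^{\ast})\subset\mathcal{A}$, together with the equally spaced chain $\beta=\beta_{0}<\beta_{1}<\dots<\beta_{n}=\bar{\beta}$. Writing $w(t)=\int_{0}^{t}Aw(s)\,ds$ in $B_{\beta}$ and repeatedly moving the bounded operator $A\colon B_{\beta_{j}}\to B_{\beta_{j+1}}$ under the integral sign, one obtains, as an identity in $B_{\bar{\beta}}$,
\[
w(t)=\int_{0}^{t}\!\!\int_{0}^{s_{1}}\!\!\cdots\!\!\int_{0}^{s_{n-1}}A^{n}w(s_{n})\,ds_{n}\cdots ds_{1},
\]
whence $\|w(t)\|_{B_{\bar{\beta}}}\leq\frac{t^{n}L^{n}n^{qn}}{n!(\bar{\beta}-\beta)^{qn}}\sup_{s\in[0,T]}\|w(s)\|_{B_{\beta}}$, the supremum being finite by continuity. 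The right-hand side is the $n$-th term of $K_{T}(\beta,\bar{\beta})$, hence tends to $0$ as $n\to\infty$; therefore $w(t)=0$ in $B_{\bar{\beta}}$, and since $B_{\beta}$ is a vector subspace of $B_{\bar{\beta}}$ (Definition \ref{Defscale}) and $w(t)\in B_{\beta}$, we get $w(t)=0$ in $B_{\beta}$ for all $t$, i.e. $f=g$.

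\emph{Where the work is.} Everything routine aside, the two load-bearing points are: (i) the combinatorial estimate showing $K_{t}(\alpha,\beta)<\infty$ for \emph{all} $t$, which genuinely relies on $q<1$ (at $q=1$ it fails and one only gets a finite horizon); and (ii) in the uniqueness argument, the observation that one must iterate $A$ \emph{upward}, into the coarser space $B_{\bar{\beta}}$, where the remainder can be driven to $0$, and only then descend back to $B_{\beta}$ via injectivity of the scale embedding — a direct Gronwall-type bootstrap inside $B_{\beta}$ cannot close, since $A$ maps only between strictly ordered levels of the scale.
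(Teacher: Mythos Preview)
Your proof is correct and follows exactly the approach of \cite{DaF} that the paper cites (the paper does not reprove this result itself): the explicit exponential series $f(t)=\sum_{n\geq 0}\frac{t^{n}}{n!}A^{n}f_{0}$, the termwise bound obtained by splitting $[\alpha,\beta]$ into $n$ equal subintervals and applying (\ref{Ovs-est}) across each, and the Stirling argument for convergence when $q<1$ are precisely what the paper invokes (see the passage in the proof of Theorem~\ref{GronTheorem} and the remark following Theorem~\ref{Th-ovs}). Your uniqueness argument via iterated integration into a coarser space $B_{\bar\beta}$, $\bar\beta\in(\beta,\alpha^{\ast})$, is also the standard one in this setting and is sound.
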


\begin{remark}
\AlexMarkup{Let us remark that estimate (\ref{OvsEs}) generalizes the classical estimate 
$\left\Vert e^{tA}\right\Vert \leq e^{t\left\Vert A\right\Vert }$ for the
exponent of a bounded operator $A$ in a Banach space and does not take into
account possible dissipativity properties of $A$. }
\end{remark}

\begin{remark}
\AlexMarkup{Function $K_{t}(\alpha ,\beta )$ can be estimated in the following way, see \cite{DaF}:%
\begin{equation}
K_{t}(\alpha ,\beta )\leq \sum_{n=0}^{\infty }\frac{L^{n}e^{n}t^{n}}{(\beta-\alpha )^{qn}}\frac{1}{n^{(1-q)n}}.  \label{K-est}
\end{equation}%
The r.h.s. of (\ref{K-est}) is an entire function of order ${\delta} =(1-q)^{-1}$
and type $\sigma =(Le)^{\delta }(e\delta )^{-1}(\beta -\alpha )^{-q\delta }$.
Thus, for any $\varepsilon >0$, there exists $t_{\varepsilon }>0$ such that 
\[
K_{t}(\alpha ,\beta )\leq e^{(\sigma +\varepsilon )t^{\delta +\varepsilon }}%
\text{ for all }t>t_{\varepsilon }.
\]}
\end{remark}

The aim of this section is to give a sufficient condition for the linear operator $Q$, given by an infinite real matrix 
$\{Q_{x,y}\}_{x,y\in \gamma }$, to generate an Ovsjannikov operator in
the scale $\mathcal{L}^{1}$ of spaces of sequences defined by (\ref{l-scale}).

\begin{theorem}
\label{OvsMapTheorem} Assume that $\{Q_{x,y}\}_{x,y\in \gamma }$ is such that for all $x,y\in \gamma $ we have

\begin{itemize}
\item $Q_{x,y}=0$ if $\left\vert x-y\right\vert >\rho $;

\item there exist $C>0$ and $k\geq 1$ such that 
\begin{equation}
|Q_{x,y}|\leq Cn_{x}^{k}.  \label{OvsMapTheoremEqn1}
\end{equation}%
Then $Q\in \mathcal{O}(\mathcal{L}^{1},q)$ for any $q<1$.
\end{itemize}
\end{theorem}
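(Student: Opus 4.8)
The plan is to verify the Ovsjannikov estimate (\ref{Ovs-est}) for the operator $Q$ on the scale $\mathcal{L}^{1}$ directly from the two hypotheses, with the logarithmic bound (\ref{logbound}) supplying precisely the slack needed to obtain an exponent $q<1$ (in fact any $q>0$). First I would reduce the problem to a single weighted row sum: for $\bar{z}=(z_{y})_{y\in\gamma}\in l^{1}_{\alpha}$ and $\alpha<\beta$, the support condition gives $(Q\bar{z})_{x}=\sum_{y\in\bar{\gamma}_{x}}Q_{x,y}z_{y}$, a finite sum, and by Tonelli
\[
\|Q\bar{z}\|_{l^{1}_{\beta}}=\sum_{x\in\gamma}e^{-\beta|x|}\Bigl|\sum_{y\in\bar{\gamma}_{x}}Q_{x,y}z_{y}\Bigr|\le\sum_{y\in\gamma}|z_{y}|\,S_{\beta}(y),\qquad S_{\beta}(y):=\sum_{x\in\gamma}e^{-\beta|x|}|Q_{x,y}|.
\]
Everything then hinges on an estimate $S_{\beta}(y)\le\bigl(\mathrm{const}/(\beta-\alpha)^{q}\bigr)e^{-\alpha|y|}$, uniform in $y\in\gamma$ and in $\alpha<\beta$ in $\mathcal{A}$.

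Next, fix $y$. The sum defining $S_{\beta}(y)$ runs only over $x\in\bar{\gamma}_{y}$, a set of $n_{y}$ points, each satisfying $|y|-\rho\le|x|\le|y|+\rho$, so $e^{-\beta|x|}\le e^{\beta\rho}e^{-\beta|y|}$; moreover (\ref{OvsMapTheoremEqn1}) together with (\ref{logbound}) gives $|Q_{x,y}|\le Cn_{x}^{k}\le C\bigl(C(1+\log(1+|y|+\rho))\bigr)^{k}$. Using $n_{y}\le C(1+\log(1+|y|))$, the elementary inequality $1+\log(1+|y|+\rho)\le(1+\log(1+\rho))(1+\log(1+|y|))$, and $\beta\le\alpha^{\ast}$, one arrives at
\[
S_{\beta}(y)\le C'\,e^{-\beta|y|}\bigl(1+\log(1+|y|)\bigr)^{k+1},
\]
with $C'$ depending only on $C,k,\rho,\alpha^{\ast}$, not on $\alpha,\beta,y$.

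The crux is then an elementary analytic fact: for every $q>0$ there is a finite $L_{0}$, depending only on $q,k$ and $\alpha^{\ast}-\alpha_{\ast}$, such that
\[
\bigl(1+\log(1+t)\bigr)^{k+1}e^{-\epsilon t}\le L_{0}\,\epsilon^{-q}\qquad\text{for all }t\ge0\text{ and }\epsilon\in(0,\alpha^{\ast}-\alpha_{\ast}].
\]
Indeed, $\epsilon^{q}(1+\log(1+t))^{k+1}e^{-\epsilon t}$ is jointly continuous, tends to $0$ as $t\to\infty$ for each fixed $\epsilon>0$, and (after the substitution $u=\log(1+t)$) its supremum over $t$ behaves like $\epsilon^{q}(\log(1/\epsilon))^{k+1}$ as $\epsilon\to0^{+}$, which tends to $0$ since a fixed power of a logarithm is dominated by every positive power of $\epsilon^{-1}$; hence the supremum over all admissible $(t,\epsilon)$ is finite. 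Applying this with $\epsilon=\beta-\alpha$ and splitting $e^{-\beta|y|}=e^{-\alpha|y|}e^{-(\beta-\alpha)|y|}$ yields $S_{\beta}(y)\le C'L_{0}(\beta-\alpha)^{-q}e^{-\alpha|y|}$, hence
\[
\|Q\bar{z}\|_{l^{1}_{\beta}}\le\frac{C'L_{0}}{(\beta-\alpha)^{q}}\sum_{y\in\gamma}e^{-\alpha|y|}|z_{y}|=\frac{C'L_{0}}{(\beta-\alpha)^{q}}\|\bar{z}\|_{l^{1}_{\alpha}}.
\]
In particular the left-hand side is finite, so $Q(l^{1}_{\alpha})\subset l^{1}_{\beta}$, and with $L:=C'L_{0}$ this is exactly (\ref{Ovs-est}); as $q>0$ was arbitrary, $Q\in\mathcal{O}(\mathcal{L}^{1},q)$ for every $q<1$.

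I expect the only genuine obstacle to be the uniform bound on $(1+\log(1+t))^{k+1}e^{-\epsilon t}$: this is precisely where the logarithmic growth hypothesis (\ref{logbound}) is essential, since for a merely polynomial bound $n_{x}\le c|x|^{m}$ the analogue $\sup_{t\ge0}t^{m}e^{-\epsilon t}\sim(m/e\epsilon)^{m}$ would only give $q\ge m$ and not $q<1$. The remaining steps are routine bookkeeping with the finite neighbourhoods $\bar{\gamma}_{y}$ and the exponential weights.
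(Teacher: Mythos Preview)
Your proof is correct and follows essentially the same route as the paper: both reduce to bounding the weighted column sums $\sum_{x}|Q_{x,y}|e^{-(\beta-\alpha)|y|}$ via the finite-range condition and the logarithmic growth of $n_{x}$, then extract the factor $(\beta-\alpha)^{-q}$ from the resulting one-variable optimization. The only cosmetic difference is that the paper converts the logarithmic bound on $n_{x}$ into a small-power bound $n_{x}\le N^{1/k}|x|^{q/2k}$ up front and then maximizes $h\mapsto h^{q}e^{-(\beta-\alpha)h}$ explicitly, whereas you keep the logarithm and invoke the (correct) fact that $\epsilon^{q}(1+\log(1+t))^{k+1}e^{-\epsilon t}$ is uniformly bounded; your justification of that last step is slightly informal but easily made rigorous by the same log-to-power substitution the paper uses.
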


\begin{proof}
Since $Q$ is linear, it is sufficient to show that 
\begin{equation}
\Vert Qz\Vert _{\beta }\leq \frac{L}{(\beta -\alpha )^{q}}\Vert z\Vert
_{\alpha }  \label{OvsMapTheoreme1}
\end{equation}%
for any $\alpha <\beta \in \mathcal{A}$ and $z\in l_{\alpha }^{1}$. By the
definition of the norm in $l_{\beta }^{1}$ we have 
\begin{equation*}
\Vert Qz\Vert _{\beta }=\sum_{x\in \gamma }e^{-\beta |x|}\bigg{|}\sum_{y\in
\gamma }Q_{x,y}z_{y}\bigg{|}.
\end{equation*}%
Now, using estimate (\ref{OvsMapTheoremEqn1}) we see that 
\begin{align}
\Vert Qz\Vert _{\beta }\leq \sum_{x\in \gamma }\sum_{y\in \gamma
}|Q_{x,y}|e^{-\beta |x|}|z_{y}|&\leq e^{\beta \rho }\sum_{x\in \gamma
}\sum_{y\in \bar{\gamma}_{x}}|Q_{x,y}|e^{-\beta |y|}|z_{y}| \nonumber \\
&\leq e^{\beta \rho }\sum_{x\in \gamma }\sum_{y\in \bar{\gamma}%
_{x}}|Q_{x,y}|e^{-(\beta -\alpha )|y|}e^{-\alpha |y|}|z_{y}| \nonumber \\
&\leq e^{\alpha ^{\ast }\rho }\GeorgyMarkup{U}\Vert z\Vert _{\alpha },
\label{OvsMapTheoremEqn3}
\end{align}
because $Q_{x,y}=0$ for $y\GeorgyMarkup{\not\in} \bar{\gamma}_{x}$ and $-|x|\leq -|y|+\rho $
for $y\in \bar{\gamma}_{x}$. Here 
\begin{equation*}
\GeorgyMarkup{U}%
\coloneqq%
\underset{y\in \gamma }{\sup }\sum_{x\in \gamma }|Q_{x,y}|e^{-(\beta -\alpha
)|y|}.
\end{equation*}%
Our next goal is to estimate the constant $\GeorgyMarkup{U}$. Using condition (\ref%
{OvsMapTheoremEqn1}) we see that for all $y\in \gamma $ 
\begin{equation*}
\sum_{x\in \gamma }|Q_{x,y}|e^{-(\beta -\alpha )|y|}\leq C\sum_{x\in
B_{y}}n_{x}^{k}e^{-(\beta -\alpha )|y|}.
\end{equation*}%
Observe that there exist constants $M,N\in \mathbb{N}$ such that 
\begin{equation*}
M<|x|\implies n_{x}\leq N^{1/k}|x|^{q/2k}.
\end{equation*}%
Then, taking into account that $|x|^{q/2}\leq |y|^{q/2}+\rho ^{q/2}$ for $%
x\in B_{y},$ we obtain, assuming without loss of generality that $|y|> M$, that 
\begin{align}
\sum_{x\in B_{y}}n_{x}^{k} &\leq \sum_{\substack{ x\in B_{y}  \nonumber\\ |x|>M}}%
N|x|^{q/2}+\sum_{\substack{ x\in \gamma  \\ |x|\leq M}}n_{x}^{k} \\
& \leq N\sum_{\substack{ x\in B_{y}  \\ |x|>M}}\left( |y|^{q/2}+\rho
^{q/2}\right) +P \ \leq \ Nn_{y}\left( |y|^{q/2}+\rho ^{q/2}\right) +P \nonumber \\
& \leq N^{2}|y|^{q/2}\left( |y|^{q/2}+\rho ^{q/2}\right) +P
\ \leq \ 2N^{2}|y|^{q}+N^{2}\rho ^{q}+P,
\end{align}%
where $P=P(\gamma ,M,q):=\sum_{\substack{ x\in \gamma  \\ |x|\leq M}}%
n_{x}^{q}<\infty .$ Hence for all $y\in \gamma $ we have%
\begin{equation*}
\sum_{x\in \gamma }|Q_{x,y}|e^{-(\beta -\alpha )|y|}\leq C\left(
N^{2}|y|^{q}+N^{2}\rho ^{q}+P\right) e^{-(\beta -\alpha )|y|}\leq
a_{1}+a_{2}|y|^{q}e^{-(\beta -\alpha )|y|}
\end{equation*}%
with $a_{1}=C\left( N^{2}\rho ^{q}+P\right) $ and $a_{2}=CN^{2}$. Now we see
that%
\begin{multline}
\GeorgyMarkup{U}\leq a_{1}+a_{2}\sup \bigg{\{}|y|^{q}e^{-(\beta -\alpha )|y|}\bigg{|}\ y\in
\gamma \bigg{\}} \\
\leq a_{1}+a_{2}\sup \bigg{\{}\bigg{(}he^{-\frac{\beta -\alpha }{q}h}\bigg{)}%
^{q}\bigg{|}\ h>0\bigg{\}} \\
\leq a_{1}+a_{2}\bigg{(}\sup \bigg{\{}he^{-\frac{\beta -\alpha }{q}h}\ %
\bigg{|}\ h>0\bigg{\}}\bigg{)}^{q}.  \label{OvsMapTheoreme3}
\end{multline}%
Hence, we can deduce that function $he^{-\frac{\beta -\alpha }{q}%
h},\,h\in \mathbb{R}$, attains its supremum when $\frac{d}{dh}%
he^{-\frac{\beta -\alpha }{q}h}=0$ that is when $h=\frac{q}{(\beta -\alpha )}
$. Hence it follows from inequality (\ref{OvsMapTheoreme3}) that 
\begin{equation*}
\GeorgyMarkup{U}\leq \frac{a_{1}(\alpha ^{\ast }-\alpha _{\ast })^{q}+a_{2}\left(
e^{-1}q\right) ^{q}}{(\beta -\alpha )^{q}}.
\end{equation*}%
Now, continuing from equation (\ref{OvsMapTheoremEqn3}) we finally see that (%
\ref{OvsMapTheoreme1}) holds with $L=e^{\alpha ^{\ast }\rho }(a_{1}(\alpha
^{\ast }-\alpha _{\ast })^{q}+a_{2}q^{q})$, and the proof is complete.
\end{proof}

\subsection{Comparison theorem and Gronwall-type inequality}
In this section, we prove generalizations of the classical comparison
theorem for differential equations and, as a consequence, a version of the
Gronwall inequality, that works in our scale of Banach spaces of sequences.

Let us consider the linear integral equation 
\begin{equation}
f(t)=\bar{z}+\int_{0}^{t}Qf(s)ds,\quad t\in \mathcal{T},  \label{ct1}
\end{equation}%
in $l_{\alpha ^{\ast }}^{1}$ where $Q\in \mathcal{O}(\mathcal{L}^{1},q)$, $%
q<1$, is a linear operator generated by the infinite matrix $%
\{Q_{x,y}\}_{x,y\in \gamma }$ and $\bar{z}=(z_{x})_{x\in \gamma }\in
l_{\alpha }^{1}$ for some $\alpha <\alpha ^{\ast }$. It follows from Theorem %
\ref{Th-ovs} that this equation has a unique solution $f\in l_{\alpha +}^{1}$%
.

The next result is an extension of the classical comparison theorem to our
framework.

\begin{theorem}[Comparison Theorem]
\label{GronTheorem} \quad \newline
Suppose that $Q_{x,y}\geq 0$ for all $x,y\in \gamma $ and let $g:\mathcal{T}%
\rightarrow l_{\alpha }^{1}$ be a bounded map such that 
\begin{equation*}
g_{x}(t)\leq z_{x}+\bigg{[}\int_{0}^{t}Qg(s)ds\bigg{]}_{x},\quad t\in 
\mathcal{T},\ x\in \gamma .
\end{equation*}%
Then for all $t\in \mathcal{T}$ and all $x\in \gamma $ we have the inequality%
\begin{equation*}
g_{x}(t)\leq f_{x}(t),
\end{equation*}%
where $f=(f_{x})_{x\in \gamma }$ is the solution of (\ref{ct1}) .
\end{theorem}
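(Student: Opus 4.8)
The plan is to mimic the classical comparison-theorem argument, replacing the scalar Gronwall iteration with Picard iteration in the scale $\mathcal{L}^1$, and to exploit the positivity of $Q$ throughout. First I would set up the successive approximations for the solution $f$ of \eqref{ct1}: define $f^{(0)}(t) \coloneqq \bar z$ and $f^{(k+1)}(t) \coloneqq \bar z + \int_0^t Q f^{(k)}(s)\,ds$. Since $Q\in\mathcal{O}(\mathcal{L}^1,q)$ with $q<1$, Theorem \ref{Th-ovs} guarantees convergence of this scheme to $f\in l_{\alpha+}^1$, and for fixed $\beta\in(\alpha,\alpha^\ast)$ the iterates $f^{(k)}$ converge to $f$ in $l_\beta^1$, hence componentwise. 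The key structural observation is that because $\bar z\ge 0$ componentwise (this follows from $z_x \ge g_x(0)$... — actually we need $z_x\ge 0$; note $g_x(0)\le z_x$, but $g$ need not be nonnegative, so I would instead argue directly with differences) and $Q_{x,y}\ge 0$, each $f^{(k)}$ is monotone in $\bar z$ and the map $h\mapsto \bar z + \int_0^\cdot Qh(s)\,ds$ is order-preserving on nonnegative sequences.

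The cleanest route is to work with the difference $\delta(t)\coloneqq f(t)-g(t)$ and show $\delta_x(t)\ge 0$ for all $x,t$. From the hypothesis on $g$ and equation \eqref{ct1}, for every $x\in\gamma$ and $t\in\mathcal{T}$,
\begin{equation*}
\delta_x(t) \ge \Big[\int_0^t Q\,\delta(s)\,ds\Big]_x .
\end{equation*}
Now I would iterate this inequality: since $Q_{x,y}\ge 0$, applying $Q$ and integrating preserves the inequality, giving
\begin{equation*}
\delta_x(t) \ge \Big[\int_0^t Q\int_0^{s_1} Q\,\delta(s_2)\,ds_2\,ds_1\Big]_x \ge \cdots \ge \Big[\,\underbrace{\int\cdots\int}_{N}\, Q^N\delta(s_N)\,ds_N\cdots ds_1\Big]_x + (\text{nonnegative remainder}).
\end{equation*}
Here the remainder terms are of the form $\big[\int\cdots\int Q^j\big(\int_0^{s_j}Q\,\delta\big)\,\big]_x$ with one genuine use of the hypothesis inequality at each stage; I must be careful to keep track that each is nonnegative because it is built from $Q$ applied to something bounded below by an iterated $Q$-integral — this telescoping is the point that needs a clean inductive formulation rather than the informal display above.

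The main obstacle is controlling the remainder $R_N(t)\coloneqq\big[\int_0^t\!\!\int_0^{s_1}\!\!\cdots\!\!\int_0^{s_{N-1}} Q^N \delta(s_N)\,ds_N\cdots ds_1\big]_x$ and showing it tends to $0$ as $N\to\infty$ in the appropriate $l_\beta^1$ sense. Since $g$ is bounded with values in $l_\alpha^1$ and $f\in l_{\alpha+}^1$, the difference $\delta$ is a bounded map into $l_{\alpha_1}^1$ for any $\alpha_1\in(\alpha,\beta)$; then by the Ovsjannikov estimate \eqref{Ovs-est} applied $N$ times across the subinterval $(\alpha_1,\beta)$ subdivided into $N$ equal pieces, $\|Q^N\delta(s)\|_{l_\beta^1}\le \big(L/((\beta-\alpha_1)/N)^q\big)^N\sup_s\|\delta(s)\|_{l_{\alpha_1}^1}$, and the $N$-fold time integral contributes $t^N/N!$. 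The resulting bound is $\big(L N^q/(\beta-\alpha_1)^q\big)^N t^N/N!$, whose $N$-th term is summable precisely because $q<1$ (this is the same convergence that makes $K_t(\alpha,\beta)$ finite in Theorem \ref{Th-ovs}); in particular it $\to 0$. Passing to the limit $N\to\infty$ componentwise yields $\delta_x(t)\ge 0$, i.e.\ $g_x(t)\le f_x(t)$, as claimed. I would also remark that this is exactly the estimate underlying the subsequent Gronwall-type Lemma \ref{gron111}, so the same bookkeeping will be reused there.
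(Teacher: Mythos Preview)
Your approach is correct and essentially matches the paper's: both iterate the integral inequality using the positivity of $Q_{x,y}$ and then kill the $N$-fold remainder via the Ovsjannikov bound $\|Q^N\|_{l^1_{\alpha_1}\to l^1_{\beta}}\le (LN^q/(\beta-\alpha_1)^q)^N$ together with the factor $t^N/N!$, which tends to zero precisely because $q<1$. The only cosmetic difference is bookkeeping: the paper introduces the Picard map $\mathcal{I}(h)(t)=\bar z+\int_0^t Qh(s)\,ds$, proves by induction that $g_x(t)\le \mathcal{I}^n_x(g)(t)$ for all $n$, and shows $\mathcal{I}^n(g)\to f$ in $l^1_\beta$ (via the same remainder estimate you use), whereas you work directly with $\delta=f-g$ and show $\delta_x(t)\ge R_N(t)\to 0$; once you write the induction cleanly (your ``nonnegative remainder'' aside is unnecessary---the iteration gives $\delta_x(t)\ge R_N(t)$ with no extra terms), the two arguments are line-for-line equivalent.
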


\begin{proof}
Let $\mathcal{B}_{a}:=\mathcal{B}([0,T],l_{\alpha }^{1}),\ a\in \mathcal{A}$%
, be the Banach space of bounded measurable functions $\mathcal{T\rightarrow 
}l_{\alpha }^{1}$. For any $g\in \mathcal{B}_{\alpha }$ define the function 
\begin{equation*}
\mathcal{I}(g)(t)%
\coloneqq%
\bar{z}+\int_{0}^{t}Qg(s)ds.
\end{equation*}%
It is clear that $\mathcal{I}(g)\in \mathcal{B}_{\alpha +}$, which implies
that the composition power $\mathcal{I}^{n}:\mathcal{B}_{a}\rightarrow 
\mathcal{B}_{a\mathfrak{+}}$ is well-defined. It follows from (the proof of) 
\cite[Theorem 3.1.]{DaF} that%
\begin{equation}
\overbrace{\bigg{[}\lim_{n\rightarrow \infty }\mathcal{I}^{n}(g)\bigg{]}}^{%
\text{in}\ \mathcal{B}([0,T],l_{\beta }^{1})}=f,\ \beta >\alpha .
\label{Lim-g}
\end{equation}%
\begin{color}{black}%
Indeed, 
\begin{equation*}
\mathcal{I}^{n}(g)(t)=\sum_{k=0}^{n-1}\frac{t^{k}}{k!}Q^{k}\bar{z}%
+Q^{n}\int_{0}^{t}...\int_{0}^{t_{n-1}}\GeorgyMarkup{g(t_{n})dt_{n}...dt_{1}}
\end{equation*}%
It was proved in \cite[Theorem 3.1.]{DaF}, cf. formula (3.5), that the
series $\sum_{n=0}^{\infty }\frac{t^{n}}{n!}Q^{n}\bar{z}$ converges
uniformly in any $l_{\beta }^{1}$, $\beta >\alpha $, and $\sum_{n=0}^{\infty
}\frac{t^{n}}{n!}Q^{n}\bar{z}=f(t)$. On the other hand, dividing the
interval $\left[ \alpha ,\beta \right] $ into $n$ intervals of equal length
and using estimate (\ref{Ovs-est}) on each of them, as in \cite[Theorem 3.1.]%
{DaF}, we obtain the bound%
\begin{equation*}
a_{n}:=\left\Vert
Q^{n}\int_{0}^{t}...\int_{0}^{t_{n-1}}\GeorgyMarkup{g(t_{n})dt_{n}...dt_{1}}\right\Vert
_{l_{\beta }^{1}}\leq \sup_{t}\left\Vert g(t)\right\Vert _{l_{\alpha }^{1}}%
\frac{t^{n}}{n!}D^{n}n^{qn}
\end{equation*}%
with $D:=L(\beta -\alpha )^{-q}$. Taking into account that $n!\geq \left( 
\frac{n}{e}\right) ^{n}$ we see that $a_{n}\rightarrow 0,n\rightarrow \infty 
$, which implies (\ref{Lim-g}).%
\end{color}%

We have therefore $\lim_{n\rightarrow \infty }\mathcal{I}%
_{x}^{n}(g)(t)=f_{x}(t)$ for all $x\in \gamma $ and all $t\in \mathcal{T}$.
Hence to conclude the proof it is \GeorgyMarkup{sufficient to fix $x\in \gamma $ and prove by induction that 
 for all $t\in \mathcal{T}$ we have}
\begin{equation}
g_{x}(t)\leq \mathcal{I}_{x}^{n}(g)(t),\ \forall n\in \mathbb{N}.
\label{InduationH1}
\end{equation}%
The case $n=1$ is satisfied by the initial assumption on $g$. Let us now
assume that (\ref{InduationH1}) is true for some $n\geq 1$ and proceed by
considering the following chain of inequalities: 
\begin{multline*}
\mathcal{I}_{x}^{n+1}(g)(t)=z_{x}+\bigg{[}\int_{0}^{t}Q(\mathcal{I}%
^{n}(g)(s))ds\bigg{]}_{x} \\
=z_{x}+\sum_{y\in \gamma }Q_{x,y}\int_{0}^{t}\mathcal{I}_{y}^{n}(g)(s)ds 
\geq z_{x}+\sum_{y\in \gamma }Q_{x,y}\int_{0}^{t}g_{y}(s)ds \\
=z_{x}+\bigg{[}\int_{0}^{t}Q(g(s))ds\bigg{]}_{x}\geq g_{x}(t),
\end{multline*}%
which \GeorgyMarkup{(since t above is arbitrary)} completes the proof.
\end{proof}

\begin{corollary}[Generalized Gronwall inequality]
\label{GronCorollary} Suppose in addition that $z_{x}\geq 0$ for all $x\in
\gamma $. Moreover assume that components of the map $g$ are non-negative
functions, that is, $g_{x}(t)\geq 0$ for all $x\in \gamma $ and all $t\in 
\mathcal{T}$. Then for all $\beta >\alpha $ we have the inequality 
\begin{equation*}
\sum_{x\in \gamma }e^{-\beta |x|}\sup_{t\in \mathcal{T}}g_{x}(t)\leq
\GeorgyMarkup{K_{T}}(\alpha ,\beta )\sum_{x\in \gamma }e^{-\alpha |x|}z_{x},
\end{equation*}%
where $\GeorgyMarkup{K_{T}}(\alpha ,\beta )=\sum_{n=0}^{\infty }\frac{L^{n}T^{n}}{(\beta
-\alpha )^{qn}}\frac{n^{qn}}{n!}<\infty $.
\end{corollary}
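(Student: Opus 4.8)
The statement will follow quickly by combining the Comparison Theorem (Theorem~\ref{GronTheorem}) with the a priori bound (\ref{OvsEs}) of Theorem~\ref{Th-ovs}; the one extra ingredient needed is a monotonicity remark that makes the supremum over $t$ harmless.

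First I would invoke the series representation of the solution $f=(f_x)_{x\in\gamma}$ of (\ref{ct1}): as recorded in the proof of Theorem~\ref{GronTheorem} (cf. \cite[Theorem~3.1]{DaF}), the series $\sum_{n=0}^{\infty}\frac{t^n}{n!}Q^n\bar{z}$ converges in $l_\beta^1$ for every $\beta>\alpha$ and equals $f(t)$, so that each component is the convergent series $f_x(t)=\sum_{n=0}^{\infty}\frac{t^n}{n!}(Q^n\bar{z})_x$. Then I would use positivity: since $z_x\ge 0$ and $Q_{x,y}\ge 0$ for all $x,y$ (and each $(Qv)_x$ is a finite sum because $Q_{x,y}=0$ for $|x-y|>\rho$), a one-line induction gives $(Q^n\bar{z})_x\ge 0$ for all $n$ and $x$. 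Hence each $f_x$ is nonnegative, and, the coefficients $t^n/n!$ being nondecreasing on $[0,\infty)$, each $f_x$ is nondecreasing in $t$; therefore $\sup_{t\in\mathcal{T}}f_x(t)=f_x(T)$ for every $x\in\gamma$.

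Next, the Comparison Theorem gives $0\le g_x(t)\le f_x(t)$ for all $t\in\mathcal{T}$ and $x\in\gamma$, whence $\sup_{t\in\mathcal{T}}g_x(t)\le f_x(T)$. Multiplying by $e^{-\beta|x|}$, summing over $\gamma$, recognising the result as $\|f(T)\|_{l_\beta^1}$ (all summands being nonnegative), and finally applying (\ref{OvsEs}) at $t=T$ with $f_0=\bar{z}\in l_\alpha^1$, I obtain
\[
\sum_{x\in\gamma}e^{-\beta|x|}\sup_{t\in\mathcal{T}}g_x(t)\;\le\;\|f(T)\|_{l_\beta^1}\;\le\;K_T(\alpha,\beta)\,\|\bar{z}\|_{l_\alpha^1}\;=\;K_T(\alpha,\beta)\sum_{x\in\gamma}e^{-\alpha|x|}z_x,
\]
with $K_T(\alpha,\beta)<\infty$ by Theorem~\ref{Th-ovs}. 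This is precisely the claimed inequality.

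I do not expect a genuine obstacle here: the only point that might look delicate, namely swapping $\sup_t$ with the infinite sum over $x$, is bypassed because each $f_x$ attains its supremum on the compact interval $\mathcal{T}$ at the common endpoint $t=T$. The substantive use of the hypotheses is just the nonnegativity of $\bar{z}$ and of the matrix $Q$, which is needed both for the Comparison Theorem to apply and for the componentwise monotonicity of $f$; I would make sure both roles are stated explicitly.
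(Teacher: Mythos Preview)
Your proposal is correct and follows essentially the same route as the paper: apply the Comparison Theorem to get $g_x(t)\le f_x(t)$, use positivity to see that $\sup_{t\in\mathcal{T}}f_x(t)=f_x(T)$, and then invoke the Ovsjannikov estimate (\ref{OvsEs}) for $\|f(T)\|_{l_\beta^1}$. The only cosmetic difference is that you obtain the monotonicity of $f_x$ from the series representation $f(t)=\sum_{n\ge 0}\frac{t^n}{n!}Q^n\bar{z}$ with nonnegative terms, whereas the paper reads it off the integral equation $f_x(t)=z_x+\big[\int_0^tQf(s)\,ds\big]_x$ together with $f\ge 0$ (which it in turn gets from $0\le g\le f$); both arguments are equivalent and rest on the same positivity of $Q$ and $\bar{z}$.
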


\begin{proof}
Using Theorem \ref{GronTheorem}, we see that for all $x\in \gamma $ and all $%
t\in \mathcal{T}$ we have 
\begin{equation*}
g_{x}(t)\leq z_{x}+\bigg{[}\int_{0}^{t}Q(g(s))ds\bigg{]}_{x}\leq z_{x}+%
\bigg{[}\int_{0}^{t}Q(f(s))ds\bigg{]}_{x}.
\end{equation*}%
Since functions $g$ and therefore $f$ are non-negative we see that for all $%
x\in \gamma $ 
\begin{equation*}
\sup_{t\in \mathcal{T}}g_{x}(t)\leq z_{x}+\bigg{[}\int_{0}^{T}Q(f(s))ds%
\bigg{]}_{x}=f_{x}(T).
\end{equation*}%
Hence it follows that 
\begin{equation*}
\sum_{x\in \gamma }e^{-\beta |x|}\sup_{t\in \mathcal{T}}g_{x}(t)\leq
\sum_{x\in \gamma }e^{-\beta |x|}f_{x}(T)\leq \Vert f(T)\Vert _{l_{\beta
}^{1}}.
\end{equation*}%
The right-hand side of the inequality above can be estimated using \cite[%
Theorem 3.1]{DaF}, cf. Theorem \ref{Th-ovs}. In particular, we get 
\begin{equation*}
\Vert f(T)\Vert _{l_{\beta }^{1}}\leq \sum_{n=0}^{\infty }\frac{L^{n}T^{n}}{%
(\beta -\alpha )^{qn}}\frac{n^{qn}}{n!}\Vert \GeorgyMarkup{\bar{z}}\Vert
_{l_{\alpha }^{1}}<\infty .
\end{equation*}%
Hence letting $\GeorgyMarkup{K_{T}}(\alpha ,\beta )=\sum_{n=0}^{\infty }\frac{L^{n}T^{n}}{%
(\beta -\alpha )^{qn}}\frac{n^{qn}}{n!}$ we see that the proof is complete.
\end{proof}

\begin{lemma}
\label{gron111}Consider a bounded measurable map $\varrho :\mathcal{%
T\rightarrow }l_{\alpha }^{1}$, $\alpha\in\mathcal{A}$, and assume that its
components satisfy the inequality 
\begin{equation}
\varrho _{x}(t)\leq Bn_{x}^{k}\sum_{y\in \bar{\gamma}_{x}}\int_{0}^{t}%
\varrho _{y}(s)ds+b_{x},\ t\in \mathcal{T},\ x\in \gamma ,  \label{ineq111}
\end{equation}%
for some constants $B > 0$ and $k \geq 1$ and $b:=(b_{x})_{x\in \gamma }\in $ $l_{\alpha
}^{1}$, $b_{x}\geq 0$. Then we have the estimate%
\begin{equation}
\sum_{x\in \gamma }e^{-\beta |x|}\sup_{t\in \mathcal{T}}\varrho _{x}(t)\leq
\GeorgyMarkup{K_{T}}(\alpha ,\beta )\sum_{x\in \gamma }e^{-\alpha |x|}\GeorgyMarkup{b_{x}}
\label{ineq222}
\end{equation}%
for any $\beta >\alpha $, with $\GeorgyMarkup{K_{T}}(\alpha ,\beta )=\sum_{n=0}^{\infty }\frac{%
L^{n}T^{n}}{(\beta -\alpha )^{qn}}\frac{n^{qn}}{n!}<\infty $, cf. Theorem %
\ref{Th-ovs}.
\end{lemma}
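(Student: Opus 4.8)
The plan is to read the hypothesis (\ref{ineq111}) as a componentwise linear integral inequality governed by a single infinite matrix and then to invoke the generalized Gronwall inequality, Corollary~\ref{GronCorollary}.

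First I would introduce, for $x,y\in\gamma$, the matrix
$Q_{x,y}:=B\,n_{x}^{k}\,1_{\bar{\gamma}_{x}}(y)$,
and denote by $Q$ the linear operator on real sequences it generates. Then $Q_{x,y}\geq 0$ for all $x,y\in\gamma$; $Q_{x,y}=0$ whenever $|x-y|>\rho$, since in that case $y\notin\bar{\gamma}_{x}$; and $|Q_{x,y}|\leq B\,n_{x}^{k}$, which is precisely estimate (\ref{OvsMapTheoremEqn1}) with $C=B$ and the exponent $k\geq 1$ from the hypothesis. Hence Theorem~\ref{OvsMapTheorem} applies and $Q\in\mathcal{O}(\mathcal{L}^{1},q)$ for every $q<1$. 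Fix such a $q$, let $L>0$ be the associated constant from (\ref{Ovs-est}), and observe that the corresponding $K_{T}(\alpha,\beta)=\sum_{n\geq 0}\frac{L^{n}T^{n}}{(\beta-\alpha)^{qn}}\frac{n^{qn}}{n!}$ is finite by Theorem~\ref{Th-ovs}.

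With this choice of $Q$, the hypothesis (\ref{ineq111}) becomes
$\varrho_{x}(t)\leq b_{x}+\big[\int_{0}^{t}Q\varrho(s)\,ds\big]_{x}$
for all $t\in\mathcal{T}$ and $x\in\gamma$, where $b=(b_{x})_{x\in\gamma}\in l_{\alpha}^{1}$ and $b_{x}\geq 0$. Since $Q\geq 0$ and $b_{x}\geq 0$, the right-hand side is non-negative; consequently, replacing $\varrho$ by $\varrho^{+}:=(\max\{\varrho_{x},0\})_{x\in\gamma}$ produces a bounded measurable map $\mathcal{T}\to l_{\alpha}^{1}$ whose components are non-negative and still satisfy the same inequality (one uses $\varrho\leq\varrho^{+}$ together with $Q\geq 0$), and which obeys $\sup_{t\in\mathcal{T}}\varrho_{x}(t)\leq\sup_{t\in\mathcal{T}}\varrho_{x}^{+}(t)$ for every $x$. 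Thus $\varrho^{+}$, the matrix $Q$, and the initial sequence $\bar{z}:=b$ satisfy all the assumptions of the Comparison Theorem~\ref{GronTheorem} and of its Corollary~\ref{GronCorollary}; the latter gives, for every $\beta>\alpha$,
\[
\sum_{x\in\gamma}e^{-\beta|x|}\sup_{t\in\mathcal{T}}\varrho_{x}^{+}(t)\;\leq\;K_{T}(\alpha,\beta)\sum_{x\in\gamma}e^{-\alpha|x|}b_{x}.
\]
Combining this with $\sup_{t}\varrho_{x}(t)\leq\sup_{t}\varrho_{x}^{+}(t)$ yields the claimed estimate (\ref{ineq222}).

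Since all the substantial work is already contained in the Appendix, there is no serious obstacle here; the only points demanding a little care are (i) writing down the correct matrix $Q$ and verifying that the growth bound $|Q_{x,y}|\leq B\,n_{x}^{k}$ with $k\geq 1$ falls within the polynomial-in-$n_{x}$ regime permitted by Theorem~\ref{OvsMapTheorem}, and (ii) the innocuous passage from $\varrho$ to its positive part $\varrho^{+}$, which is what makes the non-negativity hypotheses of Theorem~\ref{GronTheorem} and Corollary~\ref{GronCorollary} literally applicable.
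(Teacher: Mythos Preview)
Your proof is correct and follows essentially the same route as the paper: define the matrix $Q_{x,y}=Bn_{x}^{k}1_{\bar{\gamma}_{x}}(y)$, invoke Theorem~\ref{OvsMapTheorem} to see that $Q\in\mathcal{O}(\mathcal{L}^{1},q)$ for any $q<1$, and then apply Corollary~\ref{GronCorollary}. Your passage to the positive part $\varrho^{+}$ is in fact a small improvement over the paper's argument, which applies Corollary~\ref{GronCorollary} directly without explicitly verifying the non-negativity hypothesis $g_{x}(t)\geq 0$ that the corollary requires.
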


\begin{proof}
Inequality (\ref{ineq111}) can be rewritten in the form 
\begin{equation*}
\varrho _{x}(t)\leq \GeorgyMarkup{\sum_{y\in \bar{\gamma} }}Q_{x,y}\int_{0}^{t}\varrho
_{y}(s)ds+b_{x},\ t\in \mathcal{T,}
\end{equation*}%
where 
\begin{equation*}
Q_{x,y}=%
\begin{cases}
Bn_{x}^{k}, & |x-y|\leq \rho , \\ 
0, & |x-y|>\rho%
\end{cases}%
\end{equation*}%
for all $x\in \gamma .$ We have $\varrho \in \mathcal{B}(\mathcal{T}%
,l_{\alpha }^{1})$, and $|Q_{x,y}|\leq Bn_{x}^{k}$. Therefore using Theorem %
\ref{OvsMapTheorem} we conclude that for any $q\in (0,1)$ matrix $\left(
Q_{x,y}\right) $ generates an Ovsjannikov operator of order $q$ on $\mathcal{%
L}^{1}$. Therefore we can now use Corollary \ref{GronCorollary} to conclude
that (\ref{ineq222}) holds.
\end{proof}

\subsection{Estimates of the solutions \label{sec-est-sol}}
We start with the following auxiliary result.

\begin{lemma}
\label{DriftLemma} Suppose that $\sigma _{1},\sigma _{2}\in \mathbb{R}$ and $%
Z_{1},Z_{2}\in S^{\gamma }$. Then for all $x\in \gamma $ we have the
following inequalities: 
\begin{align*}
|\Psi _{x}(Z_{1})-\Psi _{x}(Z_{2})|& \leq
M\GeorgyMarkup{(n_{x}+1)}|z_{1,x}-z_{2,x}|+M\sum_{y\in\gamma_{x}}|z_{1,y}-z_{2,y}|, \\[0.01in]
|\GeorgyMarkup{\Psi(0)}|& \leq \GeorgyMarkup{Mn_{x}},
\end{align*}%
and
\begin{align}
|\Phi _{x}(Z_{1})| &\leq c(1+|z_{1,x}|^{R}) + \GeorgyMarkup{\bar{a}n_{x}(1+2|z_{1,x}|)
 + \bar{a}\sum_{y\in \gamma_{x}}|z_{1,y}|,} \nonumber \\
(z_{1,x}-z_{2,x})(\Phi _{x}(Z_{1})-\Phi _{x}(Z_{2}))
&\leq (b+\frac{1}{2} \GeorgyMarkup{+4\bar{a}^{2}n_{x}^{2}})(z_{1,x}-z_{2,x})^{2}+\frac{1}{2}\bar{a}%
^{2}n_{x}\sum_{y\in \gamma _{x}}(z_{1,y}-z_{2,y})^{2}, \nonumber
\end{align}%
where constants $M,c,b$ and $\bar{a}$ are defined in Assumption \ref{mainass}%
.
\end{lemma}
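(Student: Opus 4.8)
The plan is to derive all four bounds by direct substitution of the explicit forms (\ref{Phi-form}) and (\ref{Psi-form}) of the coefficients together with the pointwise estimates collected in Assumption \ref{mainass}, using nothing beyond the triangle inequality, the Cauchy--Schwarz and Young inequalities, and the elementary observations that $n_{x}\geq 1$ and that $\bar{\gamma}_{x}$ is the disjoint union of $\{x\}$ and $\gamma_{x}$, so that $\bar{\gamma}_{x}$ has $n_{x}$ elements and $\gamma_{x}$ has $n_{x}-1<n_{x}$ elements. It is convenient to write $\delta_{y}:=z_{1,y}-z_{2,y}$ for $y\in\gamma$. Since the Lipschitz and growth bounds in Conditions (B) and (E) are preserved when $\bar{a}$, $M$ are replaced by larger constants, we may and do assume $\bar{a}\geq 1$.

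The two $\Psi_{x}$-estimates and the growth bound for $\Phi_{x}$ are immediate. From (\ref{Psi-form}) and the Lipschitz part of (\ref{cond-lip1}), $|\Psi_{x}(Z_{1})-\Psi_{x}(Z_{2})|\leq M\sum_{y\in\bar{\gamma}_{x}}(|\delta_{x}|+|\delta_{y}|)=Mn_{x}|\delta_{x}|+M\sum_{y\in\bar{\gamma}_{x}}|\delta_{y}|$, and splitting off the $y=x$ summand gives $M(n_{x}+1)|\delta_{x}|+M\sum_{y\in\gamma_{x}}|\delta_{y}|$; while $|\Psi_{x}(0)|\leq\sum_{y\in\bar{\gamma}_{x}}|\psi_{xy}(0,0)|\leq Mn_{x}$ by the growth part of (\ref{cond-lip1}). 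For the growth of $\Phi_{x}$ use (\ref{Phi-form}): bound $|\phi(z_{1,x})|$ via (\ref{bound1}), bound $\sum_{y\in\bar{\gamma}_{x}}|\varphi_{xy}(z_{1,x},z_{1,y})|\leq\bar{a}\sum_{y\in\bar{\gamma}_{x}}(1+|z_{1,x}|+|z_{1,y}|)$ via the growth part of (B), and split off $y=x$, using $n_{x}\geq1$ to absorb the leftover $\bar{a}|z_{1,x}|$ into $\bar{a}n_{x}(1+2|z_{1,x}|)$.

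The only estimate needing any organisation is the dissipativity-type inequality, and the essential point there is that for the self-interaction $\phi$ one must use Condition (D), not a Lipschitz bound, since $\phi$ is only of polynomial growth $R$. Splitting $\Phi_{x}$ as in (\ref{Phi-form}),
\[
\delta_{x}\big(\Phi_{x}(Z_{1})-\Phi_{x}(Z_{2})\big)=\delta_{x}\big(\phi(z_{1,x})-\phi(z_{2,x})\big)+\delta_{x}\sum_{y\in\bar{\gamma}_{x}}\big(\varphi_{xy}(z_{1,x},z_{1,y})-\varphi_{xy}(z_{2,x},z_{2,y})\big),
\]
the first term is $\leq b\,\delta_{x}^{2}$ by (\ref{cond-diss1}); for the second, the Lipschitz bound in (B) bounds its absolute value by $\bar{a}\sum_{y\in\bar{\gamma}_{x}}(\delta_{x}^{2}+|\delta_{x}||\delta_{y}|)$, which after separating $y=x$ equals $\bar{a}n_{x}\delta_{x}^{2}+\bar{a}\delta_{x}^{2}+\bar{a}\sum_{y\in\gamma_{x}}|\delta_{x}||\delta_{y}|$. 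Applying Young's inequality to each remaining cross term as $\bar{a}|\delta_{x}||\delta_{y}|\leq\tfrac{1}{2n_{x}}\delta_{x}^{2}+\tfrac{\bar{a}^{2}n_{x}}{2}\delta_{y}^{2}$ and summing over the $n_{x}-1$ points of $\gamma_{x}$ yields precisely $\tfrac12\bar{a}^{2}n_{x}\sum_{y\in\gamma_{x}}\delta_{y}^{2}$ plus $\tfrac{n_{x}-1}{2n_{x}}\delta_{x}^{2}\leq\tfrac12\delta_{x}^{2}$. Collecting terms, the second term is $\leq\big(\bar{a}(n_{x}+1)+\tfrac12\big)\delta_{x}^{2}+\tfrac12\bar{a}^{2}n_{x}\sum_{y\in\gamma_{x}}\delta_{y}^{2}$; and since $\bar{a}\geq1$, $n_{x}\geq1$ give $2\bar{a}n_{x}\geq1$ and hence $\bar{a}(n_{x}+1)\leq2\bar{a}n_{x}\leq(2\bar{a}n_{x})^{2}=4\bar{a}^{2}n_{x}^{2}$, adding $b\,\delta_{x}^{2}$ from the first term produces exactly the claimed coefficient $b+\tfrac12+4\bar{a}^{2}n_{x}^{2}$.

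I expect no real obstacle here: the lemma is a bookkeeping exercise in the definitions. The single substantive feature — and the reason the whole approximation scheme tolerates polynomially growing single-particle potentials — is that $\phi$ enters only through the one-sided bound (D), never through a Lipschitz estimate, while the globally Lipschitz pair interactions are handled by Young's inequality with a weight of order $n_{x}$, which splits them into a diagonal part proportional to $\delta_{x}^{2}$ and an off-diagonal part proportional to $\sum_{y\in\gamma_{x}}\delta_{y}^{2}$ with coefficient $\tfrac12\bar{a}^{2}n_{x}$.
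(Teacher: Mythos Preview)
Your argument is correct and matches the paper's approach: the paper's own proof is the single sentence ``The proof can be obtained by a direct calculation using assumptions on $\Phi$ and $\Psi$ stated in Section~\ref{sec-stochsystem},'' and you have simply supplied that calculation in full, organising the dissipativity estimate exactly as intended (one-sided bound~(D) for $\phi$, Lipschitz plus weighted Young for the $\varphi_{xy}$). The only cosmetic point is your reduction to $\bar a\geq 1$: strictly speaking this proves the inequality with $\max(\bar a,1)$ in place of $\bar a$, but since the constants in the lemma are never used sharply downstream this is harmless and in the spirit of the paper's own level of detail.
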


\begin{proof}
The proof can be obtained by a direct calculation using assumptions on $\Phi $ and $%
\Psi $ stated in Section \ref{sec-stochsystem}.
\end{proof}

Let us fix $\alpha \in \mathcal{A}$ and consider two processes $\Xi
_{t}^{(1)}=\left( \xi _{x,t}^{(1)}\right) _{x\in \gamma }$ and $\Xi
_{t}^{(2)}=\left( \xi _{x,t}^{(2)}\right) _{x\in \gamma }$, $\Xi ^{(1)},\Xi
^{(2)}\in \mathcal{R}_{\alpha +}^{p}$, \GeorgyMarkup{with initial values $\Xi
_{0}^{1},\Xi _{0}^{2}\in L_{\alpha }^{p}$.}

\begin{lemma}
\label{two-bound} \GeorgyMarkup{Let $p\geq 2, x\in \gamma $ be fixed and assume that $%
\mathbb{R}$-valued processes $\xi _{x,t}^{(1)}, \xi _{x,t}^{(2)}$ satisfy equation (\ref{MainSystem}). Then there exist
universal constants $B,C_{1}$and $C_{2}^{x}$} such that%
\begin{equation}
\mathbb{E}|\xi _{x,t}^{(1)}|^{p}\leq \GeorgyMarkup{\mathbb{E}}\left\vert \xi _{x,0}^{(1)}\right\vert
^{p}+C_{1}n_{x}^{2}\sum_{y\in \bar{\gamma}_{x}}\int_{0}^{t}\mathbb{E}|
\GeorgyMarkup{\xi_{y,s}^{(1)}}|^{p}ds+\GeorgyMarkup{C_{2}^{x}}  \label{single1}
\end{equation}%
and 
\begin{equation}
\mathbb{E}|\bar{\xi}_{x,t}|^{p}\leq \GeorgyMarkup{\mathbb{E}}\left\vert \bar{\xi}_{x,0}\right\vert
^{p}+Bn_{x}^{2}\sum_{y\in \bar{\gamma}_{x}}\int_{0}^{t}\mathbb{E}|\bar{\xi}%
_{y,s}|^{p}ds,  \label{pair1}
\end{equation}%
for all$\ t\in \mathcal{T}$, where $\bar{\xi}_{x,t}:=\xi _{x,t}^{(1)}-\xi
_{x,t}^{(2)}$. The constants $B,C_{1}$ and $C_{2}$ are independent of the
processes $\Xi ^{(1)},\Xi ^{(2)}$ and $x\in \gamma $. \GeorgyMarkup{Moreover $\bar{C}_{2}\coloneqq\{C_{2}^{x}\}_{x\in\gamma}\in l_{\alpha}^{p}$.}
\end{lemma}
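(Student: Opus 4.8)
The plan is to apply the Itô formula to the scalar processes $t\mapsto|\xi_{x,t}^{(1)}|^{p}$ and $t\mapsto|\bar\xi_{x,t}|^{p}$ (legitimate since $p\ge2$, so $z\mapsto|z|^{p}$ is $C^{2}$), take expectations, and dominate the resulting drift and Itô--correction integrands pointwise by Lemma~\ref{DriftLemma}. For the difference bound~(\ref{pair1}), $\bar\xi_{x,t}=\xi_{x,t}^{(1)}-\xi_{x,t}^{(2)}$ solves
\begin{equation*}
d\bar\xi_{x,t}=\bigl(\Phi_x(\Xi_t^{(1)})-\Phi_x(\Xi_t^{(2)})\bigr)\,dt+\bigl(\Psi_x(\Xi_t^{(1)})-\Psi_x(\Xi_t^{(2)})\bigr)\,dW_{x,t},
\end{equation*}
so, writing $(\bar\xi_{x,s})^{p-1}$ for $|\bar\xi_{x,s}|^{p-2}\bar\xi_{x,s}$ (the derivative of $|\cdot|^{p}/p$), Itô's formula gives
\begin{multline*}
|\bar\xi_{x,t}|^{p}=|\bar\xi_{x,0}|^{p}+\int_0^{t}p(\bar\xi_{x,s})^{p-1}\bigl(\Phi_x(\Xi_s^{(1)})-\Phi_x(\Xi_s^{(2)})\bigr)\,ds\\
+\int_0^{t}\frac{p(p-1)}{2}|\bar\xi_{x,s}|^{p-2}\bigl(\Psi_x(\Xi_s^{(1)})-\Psi_x(\Xi_s^{(2)})\bigr)^{2}\,ds+N_t,
\end{multline*}
with $N_t$ the Itô integral; the same identity with $\Xi^{(2)}\equiv0$ governs $|\xi_{x,t}^{(1)}|^{p}$. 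Since $\bar\gamma_x$ is finite, a standard localisation --- stopping the components $\{\xi_y^{(i)}:y\in\bar\gamma_x,\ i=1,2\}$ at their first exit from $(-N,N)$, using the stopped identity, noting that then the integrand of $N_{\,\cdot\,\wedge\tau_N}$ is bounded so $\mathbb E N_{t\wedge\tau_N}=0$, and letting $N\to\infty$ (Fatou on the left, monotone convergence on the right, using $\Xi^{(i)}\in\mathcal R_{\alpha+}^{p}$, path continuity and $p\ge2$, cf. Remark~\ref{RII.4}) --- reduces everything to estimating the two integral terms.

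For~(\ref{pair1}) I would invoke the dissipativity estimate of Lemma~\ref{DriftLemma}, which after multiplication by $|\bar\xi_{x,s}|^{p-2}\ge0$ gives
\begin{equation*}
(\bar\xi_{x,s})^{p-1}\bigl(\Phi_x(\Xi_s^{(1)})-\Phi_x(\Xi_s^{(2)})\bigr)\le\Bigl(b+\tfrac12+4\bar a^{2}n_x^{2}\Bigr)|\bar\xi_{x,s}|^{p}+\tfrac12\bar a^{2}n_x|\bar\xi_{x,s}|^{p-2}\sum_{y\in\gamma_x}\bar\xi_{y,s}^{2},
\end{equation*}
and likewise $|\bar\xi_{x,s}|^{p-2}\bigl(\Psi_x(\Xi_s^{(1)})-\Psi_x(\Xi_s^{(2)})\bigr)^{2}\le2M^{2}(n_x+1)^{2}|\bar\xi_{x,s}|^{p}+2M^{2}n_x|\bar\xi_{x,s}|^{p-2}\sum_{y\in\gamma_x}\bar\xi_{y,s}^{2}$ from the Lipschitz bound on $\Psi_x$. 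Applying Young's inequality $|\bar\xi_{x,s}|^{p-2}\bar\xi_{y,s}^{2}\le|\bar\xi_{x,s}|^{p}+|\bar\xi_{y,s}|^{p}$ (valid for $p\ge2$) and $|\gamma_x|=n_x-1\le n_x$, both integrands are dominated by a constant multiple of $n_x^{2}\sum_{y\in\bar\gamma_x}|\bar\xi_{y,s}|^{p}$; taking expectations and integrating over $[0,t]$ yields~(\ref{pair1}) with $B$ depending only on $p,\bar a,M,b$, hence independent of $x$ and of $\Xi^{(1)},\Xi^{(2)}$.

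For~(\ref{single1}) the same scheme applies with $\Xi^{(2)}\equiv0$, but now the inhomogeneous terms are kept: Condition~(D) with $\sigma_2=0$ gives $\sigma\phi(\sigma)\le b\sigma^{2}+|\sigma|\,|\phi(0)|$ and Condition~(C) gives $|\phi(0)|\le c$, so $(\xi_{x,s}^{(1)})^{p-1}\phi(\xi_{x,s}^{(1)})\le b|\xi_{x,s}^{(1)}|^{p}+c|\xi_{x,s}^{(1)}|^{p-1}$; the interaction part of $\Phi_x$ and all of $\Psi_x$ are controlled by the growth bounds of Lemma~\ref{DriftLemma}, in particular $|\Psi_x(0)|\le Mn_x$. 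Combining these with the elementary inequalities $a^{p-1}\le1+a^{p}$, $a^{p-2}b^{2}\le a^{p}+b^{p}$ and Cauchy--Schwarz $\bigl(\sum_{k=1}^{m}c_k\bigr)^{2}\le m\sum_{k=1}^{m}c_k^{2}$, the drift-plus-correction integrand is bounded by $C_1 n_x^{2}\sum_{y\in\bar\gamma_x}|\xi_{y,s}^{(1)}|^{p}+C_2^{x}$ with $C_2^{x}=C_2 n_x^{2}$ for a universal constant $C_2$; taking expectations and integrating gives~(\ref{single1}). Finally, $\bar C_2=\{C_2^{x}\}_{x\in\gamma}\in l_\alpha^{p}$ because, by the logarithmic bound~(\ref{logbound}), $C_2^{x}=C_2 n_x^{2}$ grows at most polylogarithmically in $|x|$, whence $\sum_{x\in\gamma}e^{-\alpha|x|}(C_2^{x})^{p}<\infty$. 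The main difficulty is not any single estimate but the bookkeeping of the powers of $n_x$: one must bound $\bigl(\sum_{y\in\gamma_x}|\bar\xi_{y,s}|\bigr)^{2}$ by $n_x\sum_{y\in\gamma_x}|\bar\xi_{y,s}|^{2}$ \emph{before} splitting the sum, so that the worst coefficient is $n_x\cdot n_x=n_x^{2}$ rather than a higher power; a secondary point is the localisation argument needed to justify $\mathbb E N_t=0$ under the sole hypothesis $\Xi^{(i)}\in\mathcal R_{\alpha+}^{p}$.
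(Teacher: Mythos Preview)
Your proposal is correct and follows essentially the same route as the paper: apply It\^o's formula to $|\xi_{x,t}^{(1)}|^{p}$ and $|\bar\xi_{x,t}|^{p}$, bound the drift and correction integrands via Lemma~\ref{DriftLemma} and the Lipschitz condition on $\Psi_x$, use elementary inequalities to absorb the mixed terms $|\bar\xi_{x,s}|^{p-2}\bar\xi_{y,s}^{2}$ into $\sum_{y\in\bar\gamma_x}|\bar\xi_{y,s}|^{p}$, and take expectations. Two minor stylistic differences: the paper handles the mixed terms via $|\bar\xi_{x,s}|^{p-2}|\bar\xi_{y,s}|^{2}\le\max_{z\in\bar\gamma_x}|\bar\xi_{z,s}|^{p}\le\sum_{z\in\bar\gamma_x}|\bar\xi_{z,s}|^{p}$ rather than Young's inequality, and it does not spell out the localisation step you describe for $\mathbb{E}N_t=0$; your treatment of that point is in fact more careful.
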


\begin{proof}
We remark that in this proof all inequalities hold for all $t\in\timeint$ and $\mathbb{P}-a.s.$, that is on the same same set of measure 1. We now start with the proof of inequality (\ref{single1}). Using Itô Lemma we
see that if $x\in \Lambda _{n}$ then for all $t\in \mathcal{T}$ 
\begin{multline*}
|\xi _{x,t}^{(1)}|^{p}=|\GeorgyMarkup{\xi _{x,0}^{(1)}}|^{p}+p\int_{0}^{t}(\xi
_{x,s}^{(1)})^{p-1}\Phi _{x}(\Xi _{s}^{(1)})ds \\
+\frac{(p-1)p}{2}\int_{0}^{t}(\xi _{x,s}^{(1)})^{p-2}(\Psi _{x}(\Xi
_{s}^{(1)}))^{2}ds \\
+p\int_{0}^{t}(\xi _{x,s}^{(1)})^{p-1}\Psi _{x}(\Xi _{s}^{(1)})dW_{x}(s).
\end{multline*}%
Now from assumptions (\ref{bound1}) and (\ref{cond-diss1}) and Lemma \ref%
{DriftLemma} we can deduce that for all $t\in \mathcal{T}$ 
\begin{multline*}
(\xi _{x,s}^{(1)})^{p-1}\Phi _{x}(\Xi _{s}^{(1)})=(\xi
_{x,s}^{(1)})^{p-2}(\xi _{x,s}^{(1)})\Phi _{x}(\Xi _{s}^{(1)}) \\
\leq \left\vert \xi _{x,s}^{(1)}\right\vert ^{p-2}\left[ (b+\frac{1}{2} \GeorgyMarkup{+4\bar{a}^{2}n_{x}^{2}})|\xi
_{x,s}^{(1)}|^{2}+\frac{1}{2}\Tilde{a}_{x}^{2}\sum_{y\in \gamma _{x}}|\xi
_{y,s}^{(1)}|^{2}+\left\vert \xi _{x,s}^{(1)}\GeorgyMarkup{(\phi(0) + \bar{a}n_{x})}\right\vert \right] \\
\leq (b+\frac{1}{2} \GeorgyMarkup{+4\bar{a}^{2}n_{x}^{2}})|\xi _{x,s}^{(1)}|^{p}+\frac{1}{2}\Tilde{a}_{x}^{2}|\xi
_{x,s}^{(1)}|^{p-2}\sum_{y\in \gamma _{x}}|\xi _{y,s}^{(1)}|^{2}+\left\vert
\xi _{x,s}^{(1)}\right\vert ^{p-1}\GeorgyMarkup{(c + \bar{a}n_{x})} \\
\leq (b+\frac{1}{2} \GeorgyMarkup{+4\bar{a}^{2}n_{x}^{2}})|\xi _{x,s}^{(1)}|^{p}+\frac{1}{2}\Tilde{a}%
_{x}^{2}n_{x}\max_{y\in \bar{\gamma}_{x}}|\xi _{y,s}^{(1)}|^{p}+(1+|\xi
_{x,s}^{(1)}|)^{p}\GeorgyMarkup{(c + \bar{a}n_{x})}.
\end{multline*}%
where $\Tilde{a}_{x}:=\bar{a}\sqrt{n_{x}}$ and constants $\bar{a},$ $b$ and $%
c$ are defined in Assumption \ref{mainass}. In the last inequality, we used
the simple estimate $C^{p-1}\leq (1+C)^{p-1}\leq (1+C)^{p}$ for any $C>0$,
which holds because $p>1$. Taking into account that $\max_{y\in \bar{\gamma}%
_{x}}|\xi _{y,s}^{(1)}|^{p}\leq \sum_{y\in \bar{\gamma}_{x}}|\xi
_{y,s}^{(1)}|^{p}$ and using inequality $(1+\alpha )^{p}\leq
2^{p-1}(1+\alpha ^{p})$ we arrive at the following: 
\begin{multline*}
(\xi _{x,s}^{(1)})^{p-1}\Phi _{x}(\Xi _{s}^{(1)})\leq (b+\frac{1}{2} \GeorgyMarkup{+4\bar{a}^{2}n_{x}^{2}})|\xi
_{x,s}^{(1)}|^{p} \GeorgyMarkup{+}\\
+\frac{1}{2}\Tilde{a}_{x}^{2}n_{x}\sum_{y\in \bar{\gamma}_{x}}|\xi
_{y,s}^{(1)}|^{p}+2^{p-1}\GeorgyMarkup{(c + \bar{a}n_{x})}+2^{p-1}\GeorgyMarkup{(c + \bar{a}n_{x})}|\xi _{x,s}^{(1)}|^{p} \\
\leq (b+\frac{1}{2}\GeorgyMarkup{+4\bar{a}^{2}n_{x}^{2}} + 2^{p-1}\GeorgyMarkup{(c + \bar{a}n_{x})})|\xi _{x,s}^{(1)}|^{p}+\frac{1}{2}\bar{a}%
^{2}n_{x}^{2}\sum_{y\in \bar{\gamma}_{x}}|\xi _{y,s}^{(1)}|^{p}+2^{p-1}\GeorgyMarkup{(c + \bar{a}n_{x})}.
\end{multline*}%
In a similar way, using assumption (\ref{cond-lip1}) we obtain the estimate 
\begin{multline}
(\xi _{x,s}^{(1)})^{p-2}(\Psi _{x}(\Xi _{s}^{(1)}))^{2} \\
\leq (\xi _{x,s}^{(1)})^{p-2}\left[ 3M^{2}\GeorgyMarkup{(n_{x}+1)}^{2}|\xi
_{x,s}^{(1)}|^{2}+3M^{2}n_{x}\sum_{y\in \gamma _{x}}|\xi
_{y,s}^{(1)}|^{2}+3|\Psi (0)|^{2}\right] \\
\leq 3M^{2}\GeorgyMarkup{(n_{x}+1)}^{2}|\xi _{x,s}^{(1)}|^{p}+3M^{2}n_{x}^{2}\sum_{y\in \bar{%
\gamma}_{x}}|\xi _{y,s}^{(1)}|^{p}+3\GeorgyMarkup{M^{2}n_{x}^{2}}|\xi _{x,s}^{(1)}|^{p-2}
\label{ConditionCConsequence} \\
\leq 3\left( \GeorgyMarkup{M^{2}(n_{x}+1)^{2}}+\GeorgyMarkup{M^{2}n_{x}^{2}2^{p-1}}\right) |\xi
_{x,s}^{(1)}|^{p}+3M^{2}n_{x}^{2}\sum_{y\in \bar{\gamma}_{x}}|\xi
_{y,s}^{(1)}|^{p}+3\GeorgyMarkup{M^{2}n_{x}^{2}}2^{p-1}.
\end{multline}%
Observe that $n_{x}\geq 1$. Thus there exist constants $C_{1}, \GeorgyMarkup{C_{2}^{x}>0}$ such
that 
\begin{equation*}
|\xi _{x,t}^{(1)}|^{p}\leq \left\vert \xi _{x,0}^{(1)}\right\vert
^{p}+C_{1}n_{x}^{2}\sum_{y\in \bar{\gamma}_{x}}\int_{0}^{t}|\GeorgyMarkup{\xi
_{y,s}^{(1)}}|^{p}ds+\GeorgyMarkup{C_{2}^{x}}+p\int_{0}^{t}(\xi _{x,s}^{(1)})^{p-1}\Psi _{x}(\Xi
_{s}^{(1)})dW_{x}(s),
\end{equation*}%
which implies that (\ref{single1}) holds.

The proof of inequality (\ref{pair1}) can be obtained similarly.
Using the relation 
\begin{equation*}
\bar{\xi}_{x,t}=\bar{\xi}_{x,0}+\int_{0}^{t}\left( \Phi _{x}(\Xi
_{s}^{(1)})-\Phi _{x}(\Xi _{s}^{(2)})\right) ds+\int_{0}^{t}\left( \Psi
_{x}(\Xi _{s}^{(1)})-\Psi _{x}(\Xi _{s}^{(2)})\right) dW_{x}(s),
\end{equation*}%
$\ t\in \mathcal{T},$ and applying the Itô Lemma to $|\bar{\xi}_{x,t}|^{p}$
we obtain the inequality
\begin{multline}
|\bar{\xi}_{x,t}|^{p}\leq \left\vert \bar{\xi}_{x,0}\right\vert
^{p}+Bn_{x}^{2}\sum_{y\in \bar{\gamma}_{x}}\int_{0}^{t}|\bar{\xi}%
_{y,s}|^{p}ds  \label{form222} 
+\int_{0}^{t}p(\bar{\xi}_{x,t})^{p-1}\left( \Psi _{x}(\Xi _{s}^{(1)})-\Psi
_{x}(\Xi _{s}^{(2)})\right) dW_{x}(s)
\end{multline}%
for some constant $B>0$, which implies the result. \GeorgyMarkup{Finally, $\bar{C}_{2}\in l_{\alpha}^{p}$ because (see Assumption \ref{mainass}) for some constant $W$ we have $C_{2}^{x} \leq W(1+\log (1+|x|))$ and one can use exponential weight to sum up these terms.}
\end{proof}


\begin{thebibliography}{99}
\bibitem{ABW} S. Albeverio, Z. Brze\'{z}niak, J. L. Wu, Existence of global
solutions and invariant measures for stochastic differential equations
driven by Poisson type noise with non-Lipschitz coefficients, \textit{%
Mathematical Analysis and Applications} \textbf{371}, No 1, pp 309-322,
(2010).

\bibitem{ADK} S. Albeverio, A. Daletskii, Yu. Kondratiev, Stochastic
equations and Dirichlet operators on product manifolds. \textit{Infinite
Dimensional Analysis, Quantum Probability and Related Topics} \textbf{6}
(2003), 455-488.

\bibitem{ADK1} S. Albeverio, A. Daletskii, Yu. Kondratiev, Stochastic
analysis on product manifolds: Dirichlet operators on differential forms. 
\textit{J. Funct. Anal.} \textbf{176} (2000), no. 2, 280-316.

\bibitem{AKT} S. Albeverio, Yu. Kondratiev, T. Tsikalenko, Stochastic
dynamics for quantum lattice systems and stochastic quantization I:
Ergodicity, \textit{Random Operators and Stochastic Equations} \textbf{2},
No 2, pp 103-139, (1994).

\bibitem{AKRT} S. Albeverio, Yu. Kondratiev, M. R\"{o}ckner, T. Tsikalenko,
Glauber dynamics for quantum lattice systems, \textit{Reviews in
Mathematical Physics} \textbf{13}, No 1, pp 51-124, (2001).

\bibitem{AR} S. Albeverio, M. R\"{o}ckner, Dirichlet forms on topological
vector space - construction of an associated diffusion process, \textit{%
Probab. Th. Rel. Fields} \textbf{83} (1989), 405-434.

\bibitem{Bov} A. Bovier, Statistical Mechanics of Disordered Systems. A
Mathematical Perspective. Cambridge Series in Statistical and Probabilistic
Mathematics. Cambridge University Press, Cambridge, 2006.

\bibitem{ADGC} G. Chargaziya and A. Daletskii, Stochastic differential
equations in a scale of Hilbert spaces. Global solutions, \GeorgyMarkup{
\textit{Electron. Commun. Probab.}, \textbf{28} (2023), no. 50, 1–13.}

\bibitem{CDKP} \GeorgyMarkup{D. Conache, A. Daletskii, Yu. Kondratiev, T. Pasurek, Gibbs
Measures on Marked Configuration Spaces: Existence and Uniqueness, \textit{%
	Journal of Mathematical Physics} \textbf{59}, 013507 (2018).}

\bibitem{Dal} A. Daletskii, Stochastic differential equations in a scale of
Hilbert spaces, \emph{Electron. J. Probab. }\textbf{23} (2018), no. 119,
1-15.

\bibitem{DKK} \GeorgyMarkup{A. Daletskii, Yu. Kondratiev, Yu. Kozitsky, Phase Transitions
in Continuum Ferromagnets with Unbounded Spins, \textit{J. Math. Phys.} 
\textbf{56}, 113502 (2015).}

\bibitem{DaF} A. Daletskii, D. Finkelshtein, Non-equilibrium particle
dynamics with unbounded number of interacting neighbours, \textit{J. Stat.
Phys.,} Vol 122, No 1, pp 1-21, (2018).

\bibitem{DaFo} Yu. L. Dalecky, S. V. Fomin, Measures and Differential
Equations in Infinite-Dimensional Space, Kluwer 1992.

\bibitem{DKKP} A. Daletskii, Yu. Kondratiev, Yu. Kozitsky, T. Pasurek, Gibbs
states on random configurations, \textit{J. Math. Phys}. \textbf{55 }(2014),
083513.

\bibitem{DKKP1} A. Daletskii, Yu. Kondratiev, Yu. Kozitsky, T. Pasurek,
Phase Transitions in a quenched amorphous ferromagnet, \textit{J. Stat. Phys.%
} \textbf{156} (2014), 156-176.

\bibitem{DZ} G. Da Prato, J. Zabczyk, Stochastic Differential Equations in
Infinite Dimensions, Cambridge 1992.

\bibitem{DZ1} G. Da Prato, J. Zabczyk, Ergodicity for Infinite Dimensional
Systems, \emph{London Mathematical Society Lecture Note Series} \textbf{229}%
, University Press, Cambridge, 1996.

\bibitem{Deim} K. Deimling, Ordinary differential equations in Banach
spaces, \textit{Lecture Notes in Mathematics} \textbf{596}, Springer 1977.

\bibitem{DoRo} H. Doss, G. Royer, Processus de diffusion associe aux mesures de Gibbs sur
$\mathbb R^{{\mathbb Z}^d}$, \emph{Z. Wahrsch. verw. Gebiete} \textbf{46} (1978) 106-124.

\bibitem{Fri} J. Fritz, Infnite lattice systems of interacting diffusion processes. Existence and regularity properties, \emph{Z. Wahrsch. verw. Gebiete} \textbf{59} (1982) 291-309.

\bibitem{GelShi} I.M. Gel'fand, G.E. Shilov, Generalized Functions, Volume
2: Spaces of Fundamental and Generalized Functions, American Mathematical
Society 2016.

\bibitem{Geor} H.-O. Georgii, Gibbs Measures and Phase Transitions, De
Gruyter Studies in Mathematics Vol. 9, Berlin: de Gruyter, 1988.

\bibitem{HoSt} R. Holley, D. Stroock, Diffusions on an infinite dimensional torus, \emph{J. Funct. Anal}. \textbf{42} (1981) 29-63.

\bibitem{INZ} J. Inglis, M. Neklyudov, B. Zegarli\'{n}ski, Ergodicity for
infinite particle systems with locally conserved quantities, \emph{Infin.
Dimens. Anal. Quantum Probab. Relat. Top}. \textbf{15} (2012), No. 1,
1250005.

\bibitem{Jameson} G. J. O. Jameson, Some inequalities for $(a+b)^{p}$ and $%
(a+b)^{p}+(a-b)^{p}$. \textit{The Mathematical Gazette}, \textbf{98} (2014),
No. 541, 96-103.

\bibitem{K93} D. Klein and W.~S. Yang, A characterization of first order
phase transitions for superstable interactions in classical statistical
mechanics, \emph{J. Stat. Phys.} \textbf{71} (1993), 1043-1062.

\bibitem{KKP} Yu. Kondratiev, Yu. Kozitsky, and T. Pasurek, Gibbs random
fields with unbounded spins on unbounded degree graphs,\emph{\ J. Appl.
Probab}. \textbf{47 }(2010), 856--875.


\bibitem{LLL} O. Lanford, J. Lebowitz, E. Lieb, Time Evolution of Infinite
Anharmonic Systems, \textit{J. Stat. Phys.} \textbf{16} (1977), No.~6,
453--461.

\bibitem{LeRit} G. Leha, G. Ritter, On diffusion processes and their semigroups in Hilbert
spaces with an application to interacting stochastic systems, \emph{Ann. Prob}. \textbf{12} (1984) 1077-1112.
\GeorgyMarkup{
\bibitem{LiuRockner} W. Liu and M. Röckner, Stochastic Partial Differential Equations: An Introduction, Springer, (2015).
}

\bibitem{OHandley} R. C. O'Handley, Modern Magnetic Materials: Principles
and Applications, Wiley, 2000.

\bibitem{Pre} Ch. Preston, Random Fields, Lect. Notes Math.\ 534 (Springer,
Berlin, 1976).

\bibitem{Romano} S. Romano and V. A. Zagrebnov, Orientational ordering
transition in a continuous-spin ferrofluid, \textit{Phys. A} \textbf{253}
(1998), 483--497.

\bibitem{RocknerBDG} \GeorgyMarkup{C. Marinelli and M. Röckner, On the maximal inequalities of Burkholder, Davis and
Gundy, \textit{Expositiones Mathematicae}, Vol \textbf{34}, No 1, (2016), 1--26.}

\bibitem{RoZass1} \GeorgyMarkup{S. Rœlly and A. Zass, Marked Gibbs Point Processes with Unbounded Interaction:
	An Existence Result, \textit{Journal of Statistical Physics}, Vol \textbf{179}, (2020), 972-996.}

\bibitem{RoZass2} \GeorgyMarkup{S. Rœlly and A. Zass, Correction to: Marked Gibbs Point Processes with 	Unbounded Interaction: An Existence Result, \textit{Journal of Statistical Physics}, Vol \textbf{189} (2022), 15.}

 \bibitem{Roy} G. Royer, Processus de diffusion associe a certains modeles d'Ising a spin continues, \emph{ Z. Wahrsch. Verw. Gebiete} \textbf{46} (1979) 165-176.

\bibitem{R70} D.~Ruelle, {Superstable interactions in classical statistical
mechanics}, \emph{Commun. Math. Phys.} \textbf{18} (1970), 127--159.

\bibitem{Yosh} N. Yoshida, Application of log-Sobolev inequality to the stochastic dynamics of
unbounded spin systems on the lattice, \emph{J. Funct. Anal}. \textbf{173} (2000) 74-102.
\end{thebibliography}
\end{document}